\documentclass[pdflatex,sn-mathphys-ay]{sn-jnl}

\usepackage{graphicx}%
\usepackage{multirow}%
\usepackage{amssymb}
\usepackage{amsmath}
\usepackage{amsfonts}
\usepackage{mathrsfs}
\usepackage{mathtools}
\usepackage[title]{appendix}%
\usepackage{xcolor}%
\usepackage{textcomp}%
\usepackage{manyfoot}%
\usepackage{booktabs}%
\usepackage{algorithm}%
\usepackage{algorithmicx}%
\usepackage{algpseudocode}%
\usepackage{listings}%
\usepackage{enumitem}

\usepackage{tikz}
\usetikzlibrary{arrows.meta,positioning,fit,calc,decorations.pathreplacing,shapes.multipart}
\usepackage{cleveref}

\theoremstyle{thmstyleone}
\newtheorem{theorem}{Theorem}[section]
\newtheorem{proposition}[theorem]{Proposition}
\newtheorem{lemma}[theorem]{Lemma}

\theoremstyle{definition}
\newtheorem{definition}[theorem]{Definition}

\theoremstyle{remark}
\newtheorem{remark}[theorem]{Remark}

\begin{document}

\title{Full-Covariance Chemical Langevin Predator–Prey Diffusion with Absorbing Boundaries}

\author[1]{\fnm{Jiguang} \sur{Yu}}\email{jyu678@bu.edu}
\equalcont{These authors contributed equally to this work as co-first authors.}

\author*[2]{\fnm{Louis Shuo} \sur{Wang}}\email{swang116@vols.utk.edu}
\equalcont{These authors contributed equally to this work as co-first authors.}

\author[3]{\fnm{Yuansheng} \sur{Gao}}\email{y.gao@zju.edu.cn}

\author[4]{\fnm{Ye} \sur{Liang}}\email{ye-liang@uiowa.edu}

\affil[1]{\orgdiv{College of Engineering},
  \orgname{Boston University},
  \orgaddress{\city{Boston}, \postcode{02215}, \state{MA}, \country{USA}}}

\affil[2]{\orgdiv{Department of Mathematics},
  \orgname{University of Tennessee},
  \orgaddress{\city{Knoxville}, \postcode{37996}, \state{TN}, \country{USA}}}

\affil[3]{\orgdiv{College of Computer Science and Technology},
  \orgname{Zhejiang University},
  \orgaddress{\city{Hangzhou}, \postcode{310000}, \state{Zhejiang},\country{China}}}

\affil[4]{\orgdiv{Department of Industrial and Systems Engineering},
  \orgname{The University of Iowa},
  \orgaddress{\city{Iowa City}, \postcode{52242}, \state{IA},\country{USA}}}

\abstract{
Many stochastic predator--prey models for Rosenzweig--MacArthur dynamics add ad hoc independent (diagonal)
noise, which cannot encode the event-level coupling created by predation and biomass conversion. We develop a fully
mechanistic route from elementary reactions to an absorbed diffusion approximation and its extinction
structure. Starting from a continuous-time Markov chain on $\mathbb N_0^2$ with four reaction channels, including prey birth,
prey competition death, predator death, and a coupled predation--conversion event, we impose absorbing coordinate axes,
capturing the irreversibility of demographic extinction. Under Kurtz density-dependent scaling, the
law-of-large-numbers limit recovers the classical Rosenzweig--MacArthur ordinary differential equation, while central-limit scaling
yields a chemical-Langevin diffusion with explicit drift and a full state-dependent covariance matrix. A key
structural feature is the strictly negative instantaneous cross-covariance
$\Sigma_{12}(N,P)=-mNP/(1+N)$, arising uniquely from the predation--conversion increment $(-1,1)$ and absent from
diagonal-noise surrogates. We provide two equivalent Brownian representations: an event-based factorization with a
four-dimensional driver preserving reaction channels, and a two-dimensional Cholesky factorization convenient for
simulation. Defining the absorbed It\^o SDE by freezing trajectories at the first boundary hit, we prove strong
existence, pathwise uniqueness, non-explosion, and uniform moment bounds up to absorption. Extinction occurs with
positive probability from every interior state, and predator extinction is almost sure in the subcritical regime
$m\le c$. Reproducible Euler--Maruyama diagnostics validate factorization consistency and quantify the effect of the
cross-covariance in representative parameter regimes.
}
\keywords{density-dependent Markov chain; chemical Langevin equation; absorbed diffusion (absorbing boundary); predator–prey model (Rosenzweig–MacArthur); demographic stochasticity; extinction (hitting time)}

\maketitle

\section{Introduction}
\label{sec:intro}

Extinction is an intrinsically irreversible event in finite populations: once one species' count
hits zero, there is no endogenous mechanism that recreates it.
At the individual (demographic) level, this irreversibility is naturally encoded by a continuous-time Markov chain (CTMC) on $\mathbb{N}_0^2$. In this model, the coordinate axes are absorbing sets. There are no births in the absence of individuals, and no predation or conversion events unless both species are present.

\begin{figure}[htbp]
\centering
\begin{tikzpicture}[
    font=\footnotesize, 
    >=Latex,
    node distance=1.0cm and 0.8cm,
    box/.style={
        draw, rounded corners=2pt, align=left, 
        inner sep=5pt, 
        text width=4.0cm, 
        minimum height=1.8cm, 
        fill=white
    },
    proc/.style={box, fill=gray!10},
    theo/.style={box, fill=gray!5, dashed}, 
    key/.style={
        draw, rounded corners=2pt, align=left, 
        inner sep=5pt, text width=4.0cm, 
        line width=1.2pt, fill=white, 
        minimum height=1.8cm
    },
    arrow/.style={->, line width=0.8pt},
    dasharrow/.style={->, dashed, line width=0.8pt},
    note/.style={draw=none, align=center, inner sep=1pt, font=\scriptsize\bfseries}
]

\node[proc] (events) {
    \textbf{Elementary events}\\[2pt]
\(\mathcal E=\{B,C,D,E\}\)\\
\(\Delta_B=(1,0)\), \(\Delta_C=(-1,0)\)\\
\(\Delta_D=(0,-1)\), \(\Delta_E=(-1,1)\)\\
\(\lambda_E(N,P)=\dfrac{mNP}{1+N}\)};

\node[proc, right=of events] (ctmc) {
    \textbf{Mechanistic CTMC}\\[2pt]
    State space: $\mathbb N_0^2$\\
    Random time-change\\
    Axes absorbing $\Rightarrow$ extinction irreversible
};

\node[proc, right=of ctmc] (scaling) {
    \textbf{Kurtz scaling}\\[2pt]
    $Z^\Omega=\Omega^{-1}X^\Omega$\\
    Noise scale $\rho=\Omega^{-1/2}$\\
    Langevin limit: LLN \& CLT
};

\node[proc, below=of ctmc] (ode) {
    \textbf{LLN: mean-field ODE}\\[2pt]
    Rosenzweig--MacArthur drift\\
    $\dot z=\mu(z)$\\
    Coexistence / Hopf / Limit cycles
};

\node[key, below=of scaling] (sigma) {
    \textbf{CLT: Full Covariance}\\[2pt]
    $\Sigma(z)=\sum \lambda_e \Delta_e \Delta_e^\top$\\
    \textbf{Mechanistic fingerprint:}\\
    $\Sigma_{12} < 0$ (Coupling)
};

\node[proc, below=of sigma] (factors) {
    \textbf{Factorizations}\\[2pt]
    Event: $\Sigma=L_{\rm ev}L_{\rm ev}^\top$\\
    Cholesky: $\Sigma=L_{\rm chol}L_{\rm chol}^\top$
};

\node[proc, left=of factors] (sde) {
    \textbf{Interior SDE on } $U$\\[2pt]
    $dZ=\mu\,dt+\rho L\,dW$\\
    Valid while $Z(t)\in (0,\infty)^2$
};

\node[proc, left=of sde] (absorb) {
    \textbf{Boundary Absorption}\\[2pt]
\(\tau=\inf\{t:\,N(t)=0 \text{ or } P(t)=0\}\)\\
Freeze: \(\widehat Z(t)=Z(t\wedge\tau)\)
};

\node[theo, below=0.5cm of absorb, text width=4cm, minimum height=1cm] (ext) {
    \textbf{Theory:}\\
    $\mathbb P(\tau<\infty)>0$ always\\
    Predator dies if $m \le c$
};

\node[proc, below=0.5cm of factors, text width=4cm, minimum height=1cm] (num) {
    \textbf{Numerics:}\\
    Absorbed Euler-Maruyama\\
    Full vs Diag covariance
};

\node[note, above left=0mm of sigma, anchor=south east, xshift=20mm, yshift=0mm] (tag) {(Key Novelty)};

\draw[arrow] (events) -- (ctmc);
\draw[arrow] (ctmc) -- (scaling);

\draw[arrow] (scaling) -- (sigma);

\draw[arrow] (scaling.south) -- ++(0, -0.35) -| (ode.north);

\draw[arrow] (sigma) -- (factors);

\draw[arrow] (factors) -- (sde);
\draw[arrow] (sde) -- (absorb);

\draw[dasharrow] (absorb) -- (ext);
\draw[dasharrow] (factors) -- (num);
\draw[dasharrow] (sde) -- ++(-1.2, -1.2) node[anchor=north east, font=\scriptsize] {Well-posedness};

\node[draw, dashed, inner sep=10pt, rounded corners=5pt, line width=1pt, color=gray!80,
      fit=(events) (scaling) (num) (ext),
      label={[font=\bfseries, yshift=-3ex]north:Mechanistic Pipeline}] {};

\end{tikzpicture}
\caption{\textbf{Mechanistic roadmap.} The pipeline proceeds from event-level definition to macroscopic scaling, yielding a full-covariance diffusion with structural signature $\Sigma_{12}<0$.}
\label{fig:roadmap}
\end{figure}
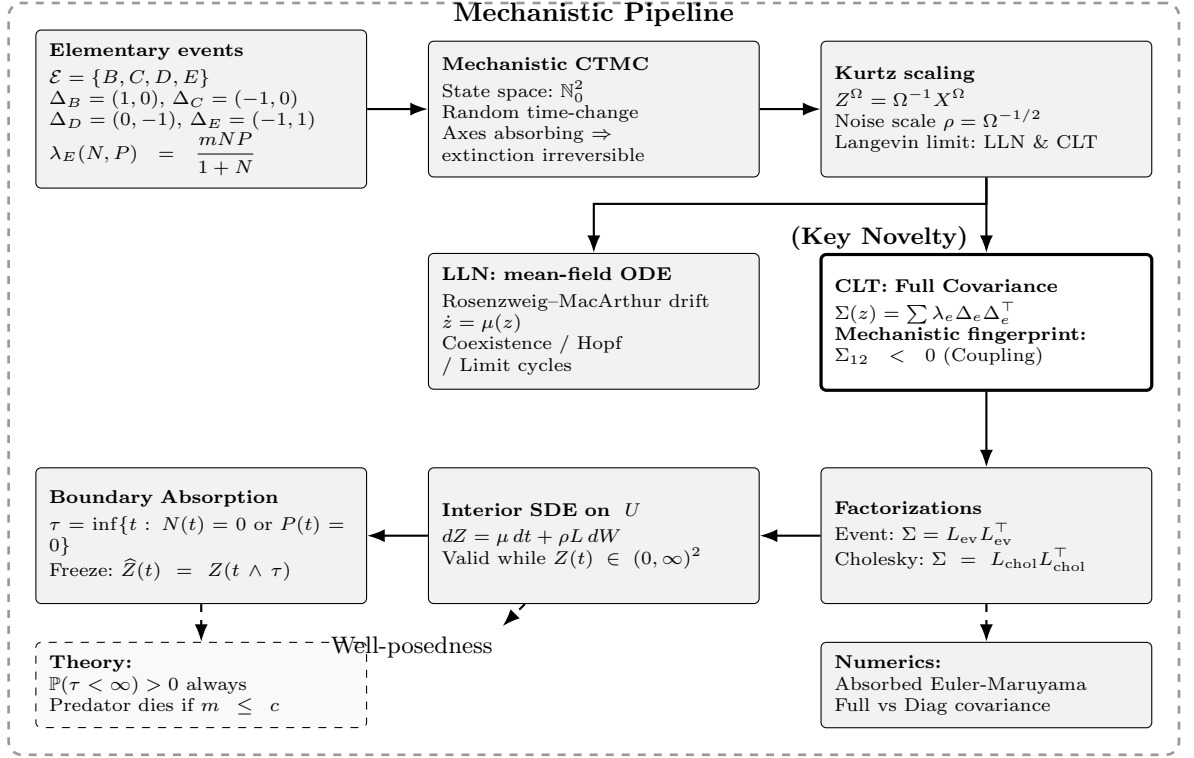

Such absorption and extinction mechanisms are standard in stochastic population models; see, for example, the general discussions in \citep{allen_stochastic_2015,ovaskainen_stochastic_2010,nasell_extinction_2001,lande_stochastic_2003}.
Such Markov chain models are frequently investigated through sample-path simulations to obtain Monte Carlo estimates of quantities of interest \citep{gibson_efficient_2000,gillespie_general_1976,korner-nievergelt_markov_2015,hobolth_simulation_2009}.
In the diffusion approximation developed here, we adopt the same biological convention by imposing absorption at the coordinate axes. Once the diffusion reaches $\{N=0\}\cup \{P=0\}$, it is frozen at that state. Consequently, the boundary represents demographic extinction events, rather than reflecting or re-entering behavior.

Deterministically, the Rosenzweig--MacArthur predator--prey system with a Holling type~II functional response
provides a canonical mean-field backbone for coexistence and predator persistence thresholds
\citep{rosenzweig_graphical_1963,holling_characteristics_1959,murray_mathematical_2002}.
In the coexistence regime, the model is also a standard setting for enrichment-driven destabilization
and oscillatory dynamics \citep{rosenzweig_paradox_1971,sugie_uniqueness_2012}.
However, when demographic stochasticity is incorporated, the form of the noise becomes model-defining:
a mechanistic description must reflect that a single predation--conversion event simultaneously decreases prey
and increases predators. Consequently, any diffusion approximation derived from such events carries an
instantaneous negative cross-covariance in its demographic noise, which is not captured by ad hoc
diagonal/independent-noise surrogates.
This observation follows directly from the standard reaction-channel (stoichiometric) construction of density-dependent Markov chains and their diffusion (Langevin) approximations
\citep{kurtz_solutions_1970,kurtz_limit_1971,kurtz_limit_1976,ethier_markov_1986,gillespie_exact_1977,gillespie_chemical_2000,anderson_continuous_2011,kampen_stochastic_2007}.
In this framework, the covariance matrix is obtained by summing $\lambda_e(z)\,\Delta_e\Delta_e^\top$ over all reaction events $e$. 
Events that act simultaneously on multiple components therefore generate off-diagonal covariance terms. We illustrate the mechanistic roadmap in Figure~\ref{fig:roadmap}.

A large fraction of the stochastic predator-prey literature introduces randomness by directly perturbing a deterministic ordinary differential equation (ODE). This is most often done by adding Gaussian (or L\'evy) forcing. In practice, the noise typically enters through diagonal multiplicative terms driven by independent Brownian motions, or it acts on a single component only.
For example, stochastic Holling-type II predator--prey models are often written in the form
\[
dN=\cdots\,dt+\sigma_1 N\,dB_1,\quad dP=\cdots\,dt+\sigma_2 P\,dB_2
\]
where $B_1$ and $B_2$ are independent Brownian motions. This choice yields a diffusion matrix that is diagonal at the level of instantaneous covariation \citep{zhang_global_2017,jiang_analysis_2020,zou_survivability_2020,huang_stochastic_2021}.
More generally, reviews and applied studies of noisy consumer-resource cycles often adopt stochastically forced ODEs in which the noise enters through one or several independent exogenous drivers 
\citep{barraquand_moving_2017}.
While such formulations are useful for representing environmental variability, they do not encode the demographic event structure underlying consumer--resource interactions.
As a result, they cannot, by construction, distinguish between noise sources that act independently on each species and those generated by coupled events. 
In particular, they fail to capture predation-with-conversion events that simultaneously decrease prey abundance and increase predator abundance.
Other coupled mechanistic models display Turing patterning, indicating that the absence of coupling structures will result in the loss of critical system behaviors \citep{liu_bidirectional_2025}.  
In studies that emphasize internal stochasticity in predator--prey SDEs, the stochastic model is typically postulated directly at the SDE level. Analysis is then carried out using Lyapunov techniques \citep{abundo_stochastic_1991}. By contrast, these models are not derived from an explicit event-based CTMC scaling.

The present paper closes this modeling gap by starting from a mechanistic CTMC specified in terms of elementary demographic events (Figure~\ref{fig:two_approach}). These events include birth, competition, death, and a predation--conversion channel. We then apply Kurtz's density-dependent scaling to derive a diffusion approximation with explicit drift and covariance structures.
In particular, the coupled predation--conversion event necessarily generates a strictly negative instantaneous cross-covariance, $\Sigma_{12}<0$, throughout the interior of the state space. 
This feature is structural: it is absent from diagonal or independent-noise surrogate models.
Moreover, it follows directly from the reaction-channel covariance construction
$\Sigma(z)=\sum_e \lambda_e(z)\,\Delta_e\Delta_e^\top$ \citep{anderson_constrained_2019,leite_constrained_2019}.
Recent work has developed general SDE-based frameworks for predator–prey models with pure demographic noise. It proves well-posedness and moment/asymptotic bounds and argues that demographic noise alone need not imply extinction \citep{wang_analysis_2025-1}.

\begin{figure}
    \centering
    \includegraphics[width=0.9\linewidth]{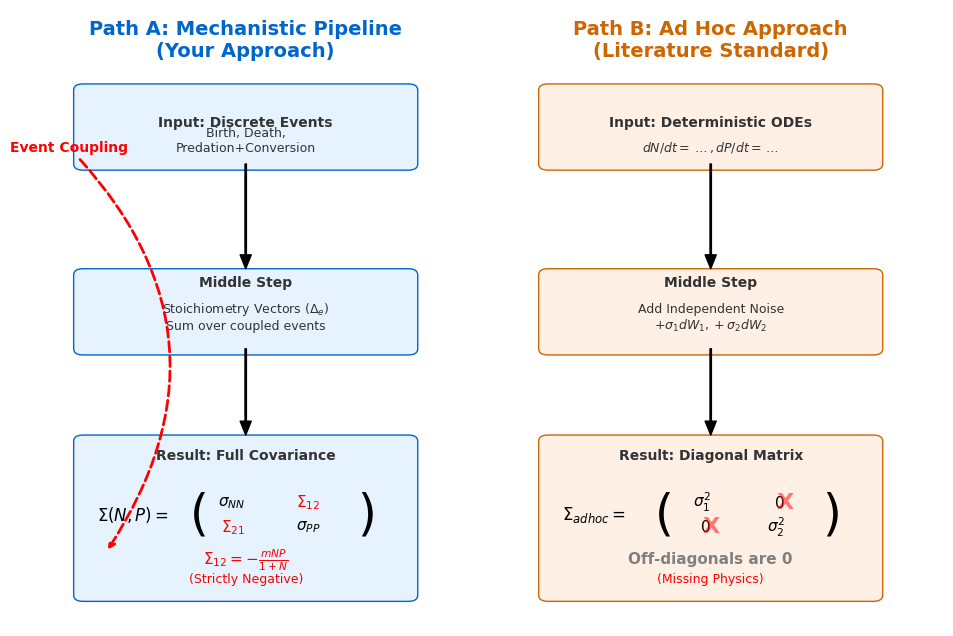}
    \caption{\textbf{Mechanistic vs. Ad Hoc Noise.}
        Path A derives the diffusion matrix from demographic events, revealing the intrinsic negative
        correlation caused by predation. Path B assumes independent noise, missing this structural coupling.}
    \label{fig:two_approach}
\end{figure}

The paper makes the following contributions.

\begin{enumerate}[label=(C\arabic*)]
\item (Mechanistic CTMC and density-dependent scaling.)
We formulate a prey--predator CTMC on $\mathbb{N}_0^2$ from four elementary demographic event channels
(birth, competition, predation--conversion, and predator death) via a random time-change representation.
Under a standard density-dependent scaling in the sense of Kurtz, we obtain the Rosenzweig--MacArthur
ODE as the law-of-large-number (LLN) limit and a diffusion approximation under the central-limit-theorem (CLT)
scaling with noise amplitude $\rho=\Omega^{-1/2}$.

\item (Full covariance and mechanistic negative correlation.)
We compute the limiting drift and the explicit diffusion matrix $\Sigma(N,P)$ implied by the event model.
In particular, the coupled predation--conversion channel yields a strictly negative cross-covariance
$\Sigma_{12}(N,P)=-\frac{mNP}{1+N}<0$ on the interior, providing a direct mechanistic signature that is absent
from diagonal/independent-noise surrogates.
We record two convenient Brownian factorizations: an event-based factorization
$\Sigma=L_{\rm ev}L_{\rm ev}^{\top}$ driven by a $4$-dimensional Brownian motion, and a $2\times2$ Cholesky
factorization on $(0,\infty)^2$.

\item (Absorbed SDE and well-posedness up to extinction.)
We define a full-covariance It\^o diffusion on $U=(0,\infty)^2$ and impose absorption at the axes by
freezing the trajectory upon first hitting $\partial U=\{N=0\}\cup\{P=0\}$.
For the resulting absorbed model we prove strong existence and pathwise uniqueness up to the absorption time,
together with non-explosion and uniform moment bounds on finite time horizons.

\item (Extinction structure.)
We show that extinction occurs with strictly positive probability from every interior initial condition,
for all parameter values. Moreover, in the subcritical regime $m\le c$, predator extinction occurs almost surely,
so the absorbed process hits the predator axis in finite time with probability one.

\item (Reproducible numerical diagnostics.)
We document Euler--Maruyama (EM) simulation procedures for the absorbed diffusion and provide diagnostics that map
directly to the theory: (i) a finite-sample consistency check comparing event-based and Cholesky-based
factorizations (4D vs 2D noise), and (ii) a covariance-role comparison between the full mechanistic model and a
diagonal surrogate with matched marginal variances.
\end{enumerate}

We collect the main results in a form suitable for quick reference.
Throughout, parameters satisfy $k>0$, $m>0$, $c>0$, and the demographic noise amplitude is denoted by $\rho>0$.
Write $U:=(0,\infty)^2$ and $\overline U:=[0,\infty)^2$.

\medskip

\begin{theorem}[Mechanistic diffusion and covariance structure]\label{thm:intro_mech_diffusion}
Consider the mechanistic event model with four channels
(prey birth, prey competition death, predator death, and coupled predation--conversion) and the
density-dependent scaling $Z^\Omega=\Omega^{-1}X^\Omega$ with $\rho=\Omega^{-1/2}$.
Then the limiting drift is the Rosenzweig--MacArthur vector field
\begin{equation}\label{eq:intro_mu}
\mu(N,P)=
\begin{pmatrix}
   N-\dfrac{N^2}{k}-\dfrac{mNP}{1+N} \\[8pt]
   \dfrac{mNP}{1+N}-cP
\end{pmatrix},
\qquad (N,P)\in\mathbb{R}_+^2,
\end{equation}
and the associated diffusion (instantaneous covariance) matrix is
\begin{equation}\label{eq:intro_Sigma}
\Sigma(N,P)=
\begin{pmatrix}
N+\dfrac{N^2}{k}+\dfrac{mNP}{1+N} & -\dfrac{mNP}{1+N}\\[8pt]
-\dfrac{mNP}{1+N} & cP+\dfrac{mNP}{1+N}
\end{pmatrix}.
\end{equation}
In particular, on $U$ one has the strict mechanistic negative correlation
$\Sigma_{12}(N,P)=-\dfrac{mNP}{1+N}<0$.
Moreover, $\Sigma$ admits an event-based Brownian factorization $\Sigma=L_{\rm ev}L_{\rm ev}^{\top}$
with $L_{\rm ev}:\overline U\to\mathbb{R}^{2\times 4}$ given explicitly by the reaction channels, and a
$2\times2$ Cholesky factorization on $U$.
\end{theorem}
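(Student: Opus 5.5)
The plan is to make the event model explicit and then read off the drift and covariance from the standard Kurtz formulas. First I would fix the scaled intensities. With $X^\Omega=(X_1,X_2)$, the channels are $\Delta_B=(1,0)$, $\Delta_C=(-1,0)$, $\Delta_D=(0,-1)$, $\Delta_E=(-1,1)$, with rates $\Omega\lambda_e(X/\Omega)$, where
\[
\lambda_B(z)=N,\quad \lambda_C(z)=N^2/k,\quad \lambda_D(z)=cP,\quad \lambda_E(z)=\frac{mNP}{1+N},\qquad z=(N,P).
\]
These are density-dependent in Kurtz's sense. Each $\lambda_e$ is locally Lipschitz on $\overline U$, and the intensities vanish on the relevant axes, which gives the absorbing structure. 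Via the random time-change representation
\[
Z^\Omega(t)=Z^\Omega(0)+\sum_e \Omega^{-1}\Delta_e\,Y_e\Bigl(\Omega\int_0^t\lambda_e(Z^\Omega(s))\,ds\Bigr),
\]
with independent unit Poisson processes $Y_e$, I would invoke Kurtz's LLN. On compacts of $\overline U$ (localizing by a stopping time to handle the quadratic rate $\lambda_C$), $Z^\Omega\to z$ solving $\dot z=\mu(z)$ with $\mu=\sum_e\lambda_e\Delta_e$. Summing the four terms gives \eqref{eq:intro_mu} directly.

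For the CLT/Langevin step, I would cite Kurtz's diffusion approximation. Writing each centered Poisson term $\Omega^{-1}\tilde Y_e(\Omega\cdot)\approx\Omega^{-1/2}W_e(\cdot)$, the fluctuation generator has covariance
\[
\Sigma(z)=\sum_e\lambda_e(z)\Delta_e\Delta_e^\top .
\]
Computing the four rank-one matrices gives the result. $\Delta_B\Delta_B^\top$ and $\Delta_C\Delta_C^\top$ each contribute only to the $(1,1)$ entry, $\Delta_D$ only to the $(2,2)$ entry, and $\Delta_E\Delta_E^\top=\begin{pmatrix}1&-1\\-1&1\end{pmatrix}$. This yields \eqref{eq:intro_Sigma}, with $\Sigma_{12}=-\lambda_E<0$ whenever $N,P>0$, since $m>0$.

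For the factorizations, I would take
\[
L_{\rm ev}(z)=\bigl[\sqrt{\lambda_B}\,\Delta_B,\ \sqrt{\lambda_C}\,\Delta_C,\ \sqrt{\lambda_D}\,\Delta_D,\ \sqrt{\lambda_E}\,\Delta_E\bigr]\in\mathbb R^{2\times4}.
\]
This is well defined on $\overline U$ because each $\lambda_e\ge0$ there, and $L_{\rm ev}L_{\rm ev}^\top=\sum_e\lambda_e\Delta_e\Delta_e^\top=\Sigma$ by construction. For the Cholesky factor, I would show that $\Sigma$ is positive definite on $U$. We have $\Sigma_{11}>0$, and
\[
\det\Sigma=(N+N^2/k)(cP+\lambda_E)+\lambda_E\,cP>0,
\]
since the $\lambda_E^2$ terms cancel. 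Then
\[
L_{\rm chol}=\begin{pmatrix}\sqrt{\Sigma_{11}}&0\\ \Sigma_{12}/\sqrt{\Sigma_{11}}&\sqrt{\det\Sigma/\Sigma_{11}}\end{pmatrix},
\]
which is smooth on $U$.

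The main obstacle is the rigorous justification of the limit theorems. The rates $\lambda_C$ and $\lambda_E$ are not globally Lipschitz or bounded, so Kurtz's theorems only apply after localization. I would first stop the process upon exiting a compact set $K\subset\overline U$, apply the theorems to the stopped process, and then remove the localization using non-explosion of the ODE. This holds because $\dot N\le N$ and $\frac{d}{dt}(N+P)\le N-cP$ give a linear bound. At the diffusion level, I would interpret the "limiting" covariance in the standard sense as the coefficient matrix of the chemical Langevin approximation, so only the formal identification $\Sigma=\sum\lambda_e\Delta_e\Delta_e^\top$ needs proof, and that is pure algebra.
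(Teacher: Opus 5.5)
Your proposal is correct and follows essentially the same route as the paper. In both, $\mu=\sum_e\lambda_e\Delta_e$ and $\Sigma=\sum_e\lambda_e\Delta_e\Delta_e^\top$ are read off from the reaction channels with Kurtz's limits cited rather than reproved, and $L_{\rm ev}$ is the same channel-wise factor; the same determinant identity gives positive definiteness on $U$ and hence the explicit Cholesky factor. The only differences are that the paper obtains $\Sigma$ from the exact predictable quadratic variation $\langle M^\Omega\rangle_t=\int_0^t\Sigma(Z^\Omega(s))\,ds$ of the compensated Poisson martingale rather than your Brownian-approximation heuristic, and it does not carry out the localization you describe.
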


\begin{theorem}[Strong well-posedness up to absorption and non-explosion]\label{thm:intro_wellposed_nonexplosion}
Let $z_0\in U$ and let $W$ be a standard $\mathbb{R}^4$ Brownian motion.
Consider the interior SDE on $U$ driven by the event factor $L_{\rm ev}$ from
Theorem~\ref{thm:intro_mech_diffusion}:
\begin{equation}\label{eq:intro_SDE}
dZ(t)=\mu(Z(t))\,dt+\rho\,L_{\rm ev}(Z(t))\,dW(t),\qquad Z(0)=z_0,
\end{equation}
and define the absorption time $\tau:=\inf\{t>0:\,Z(t)\notin U\}=\inf\{t>0:\,N(t)=0\ \text{or}\ P(t)=0\}$.
Then:
\begin{enumerate}
\item[(i)] There exists a unique (up to indistinguishability) strong solution $Z$ to \eqref{eq:intro_SDE} on $[0,\tau)$.
\item[(ii)] The absorbed (frozen) extension $\widehat Z(t):=Z(t)$ for $t<\tau$ and $\widehat Z(t):=Z(\tau)$ for $t\ge\tau$
is a continuous adapted $\overline U$-valued process and $\partial U$ is absorbing for $\widehat Z$.
\item[(iii)] The interior dynamics do not explode prior to absorption: the maximal lifetime equals $\tau$ a.s.
Moreover, for each $T>0$ and $p=2$ or $p\ge 4$ there exists $C_{p,T}<\infty$ such that
\[
\sup_{0\le t\le T}\mathbb{E}\bigl[\,|Z(t\wedge\tau)|^p\,\bigr]\le C_{p,T}\bigl(1+|z_0|^p\bigr).
\]
\end{enumerate}
\end{theorem}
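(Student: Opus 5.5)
The plan is to use a localization argument. On $U$ the coefficients are locally Lipschitz, and non-explosion follows from a Lyapunov function built from the drift's dissipative structure. The event factor is
\[
L_{\rm ev}(N,P)=\begin{pmatrix}\sqrt{N} & -\sqrt{N^2/k} & 0 & -\sqrt{\lambda_E}\\ 0&0&-\sqrt{cP}&\sqrt{\lambda_E}\end{pmatrix},\qquad \lambda_E=\frac{mNP}{1+N}.
\]
Its columns are $\sqrt{\lambda_e}\,\Delta_e$ for $e=B,C,D,E$. The square roots are smooth wherever $N,P>0$, and $\mu$ is smooth on $\overline U$. So on each compact set
\[
K_n:=[1/n,n]^2
\]
both $\mu$ and $L_{\rm ev}$ are globally Lipschitz once extended outside $K_n$, for instance by composing with a smooth cutoff. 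Standard Itô theory then gives a unique strong solution $Z^{(n)}$. Set $\tau_n:=\inf\{t:Z^{(n)}(t)\notin K_n\}$. By pathwise uniqueness, $Z^{(n)}=Z^{(n+1)}$ on $[0,\tau_n]$, so the pieces patch to a unique strong solution on $[0,\zeta)$ with $\zeta=\lim_n\tau_n$. This proves (i), with $\zeta$ as the maximal lifetime in $U$.

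The exit from $K_n$ can happen in two ways: the process approaches $\partial U$, or $|Z|\to\infty$. To rule out the second before $\tau$ (part (iii)), I apply Itô's formula to $V(N,P)=N+P$, or more generally to $(1+N+P)^{p/2}$. The predation terms cancel in the drift of $N+P$, leaving
\[
\mathcal L(N+P)=N-\tfrac{N^2}{k}-cP\le k/4 .
\]
The quadratic variation of $N+P$ has rate $\rho^2(N+N^2/k+cP)$, because the predation–conversion increment $(-1,1)$ contributes nothing to $N+P$. Let $\sigma_R$ be the exit time of $\{|Z|<R\}$. For the moment function, Itô's formula gives
\[
\mathcal L V^{q}\le qV^{q-1}\bigl(k/4\bigr)+\tfrac{q(q-1)}2V^{q-2}\rho^2\bigl(N+N^2/k+cP\bigr).
\]
Here the $N^2/k$ term threatens to be superlinear. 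My first attempt is to absorb it using the negative drift $-qV^{q-1}N^2/k$ from the exact generator, which should dominate $\tfrac{q(q-1)}{2}\rho^2 V^{q-2}N^2/k$ for large $V$. If that fails for some $\rho$, I would bound $N^2\le V^2$ and close the estimate with a Gronwall bound instead.

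These bounds give $\mathbb E[V(Z(t\wedge\tau_n\wedge\sigma_R))^{q}]\le C(1+V(z_0)^q)e^{Ct}$ uniformly in $n$ and $R$. Markov's inequality then yields $\mathbb P(\sigma_R\le T\wedge\zeta)\to0$, so $\zeta=\tau$: the lifetime ends only by hitting $\partial U$, and $Z$ extends continuously to $\tau$. Fatou's lemma gives the moment bound. Since $V\asymp |Z|$ on $\overline U$, the bound transfers to $|Z|^p$ for $p=2$ and $p\ge4$; for $p=2$ the linear terms are handled directly. The restriction on $p$ presumably reflects where this absorption estimate closes cleanly.

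For (ii), on $\{\tau<\infty\}$ the limit $Z(\tau):=\lim_{t\uparrow\tau}Z(t)$ exists. It exists because $Z$ is bounded on $[0,\tau)$ by the non-explosion step, and each coordinate is a continuous semimartingale with locally bounded coefficients on bounded subsets of $\overline U$; the coefficients extend continuously to $\overline U$. This limit lies in $\partial U$. The frozen process $\widehat Z$ is then continuous and adapted, because $\tau$ is a stopping time, and $\partial U$ is absorbing by construction.

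I expect the main obstacle to be the moment estimate. The quadratic-in-$N$ competition variance must be controlled by the logistic drift uniformly in $\rho$. The exponents allowed ($p=2$, $p\ge4$) suggest a specific Young-inequality bookkeeping that I would work out explicitly.
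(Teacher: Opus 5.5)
Your proposal is correct. Its skeleton matches the paper's: localization on compacts of $U$, then a Lyapunov function with Gronwall and Markov's inequality. The key Lyapunov ingredient, however, is different.

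\emph{The paper's route.} The paper takes $V_q(z)=|z|^q$ and uses only generic growth bounds, $\langle z,\mu(z)\rangle\le C_0(1+|z|^2)$ and $\operatorname{tr}\Sigma(z)\le C_0(1+|z|^2)$, together with $z^\top\Sigma z\le |z|^2\operatorname{tr}\Sigma$. This gives $\mathcal L V_q\le C_q(1+|z|^q)$. It then passes from the localized times to $t\wedge\tau$ via the interior exhaustion $U_m=(1/m,m)^2$, uniform integrability and Vitali's theorem.

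\emph{Your route.} You use $(1+N+P)^q$, which exploits that the predation--conversion increment $(-1,1)^\top$ has zero coordinate sum. Predation therefore drops out of both the drift and the quadratic variation of $N+P$. This gives $\mathcal L(N+P)=N-N^2/k-cP\le k/4$ and variance rate $\rho^2(N+N^2/k+cP)$.
\begin{itemize}
\item The crude bound $N^2\le V^2$ already closes Gronwall for every $q\ge 1$, with constants depending only on $(k,c,\rho,q,T)$ and not on $m$. So your first attempt, absorbing the competition variance into the logistic drift, is unnecessary.
\item The restriction to $p=2$ or $p\ge 4$ is not structural in either approach. The paper's own $|z|^q$ computation also closes for every $q\ge 2$.
\item One slip: to reach $|Z|^p$ you want $V^p$ with $V=1+N+P$, not $(1+N+P)^{p/2}$.
\item Fatou's lemma is enough for the final estimate, since only an upper bound is needed, so the paper's uniform-integrability step can be bypassed.
\item Your argument that $Z(\tau-)$ exists (boundedness on $[0,\tau)$ plus finite quadratic variation of the martingale part) makes explicit a step the paper takes for granted. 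It also directly identifies the lifetime $\lim_n\tau_n$ with the hitting time of $\partial U$.
\end{itemize}

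\emph{What each buys.} Your version gives a sharper, $m$-independent bound valid for all $p\ge1$. The paper's version is more robust: it uses only polynomial growth of $\mu$ and $\Sigma$, not the conservation identity of event $E$, so it survives modifications of the model that break that identity.
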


\begin{theorem}[Extinction properties]\label{thm:intro_extinction}
Let $\widehat Z$ be the absorbed diffusion from Theorem~\ref{thm:intro_wellposed_nonexplosion}.
\begin{enumerate}
\item[(i)] (Positive extinction probability for all parameters)
For every $z\in U$,
\[
\mathbb{P}_z(\tau<\infty)>0.
\]
\item[(ii)] (Almost sure predator extinction in the subcritical regime)
If $m\le c$, then for every $z\in U$,
\[
\mathbb{P}_z\bigl(\inf\{t\ge0:\,P(t)=0\}<\infty\bigr)=1,
\]
and consequently $\mathbb{P}_z(\tau<\infty)=1$.
\end{enumerate}
\end{theorem}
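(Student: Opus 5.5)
For part (ii) I would use a Lyapunov/supermartingale argument on the predator coordinate alone, and for part (i) a support-theorem argument that pushes the process to the boundary with positive probability.

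\textbf{Part (ii).} Work with the absorbed process $\widehat Z$ from Theorem~\ref{thm:intro_wellposed_nonexplosion}. Let $\tau_P:=\inf\{t\ge0:P(t)=0\}$.
\begin{itemize}
\item[(a)] For $t<\tau$ the predator satisfies
\[
dP=\Bigl(\tfrac{mN}{1+N}-c\Bigr)P\,dt+\rho\bigl(\cdots\bigr)dW,
\]
and $\tfrac{mN}{1+N}-c<m-c\le0$ on $U$. Hence $P(t\wedge\tau)$ is a nonnegative continuous supermartingale, using the moment bounds of Theorem~\ref{thm:intro_wellposed_nonexplosion}(iii) to make the local martingale a true martingale.
\item[(b)] By the supermartingale convergence theorem, $P(t\wedge\tau)$ converges a.s. to a finite limit $P_\infty$.
\item[(c)] On $\{\tau=\infty\}$ the quadratic variation $\int_0^\infty\rho^2(cP+\tfrac{mNP}{1+N})\,dt\ge\int_0^\infty\rho^2 cP\,dt$ must be finite, because a convergent continuous semimartingale with nonpositive drift has a convergent martingale part. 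This forces $\liminf P=0$, so $P_\infty=0$ on that event.
\item[(d)] It remains to exclude $P(t)\to0$ without hitting. Apply Itô to $\log P$ or to $\sqrt P$. For instance, $d\sqrt P$ has drift
\[
\tfrac12 P^{-1/2}\bigl(\tfrac{mN}{1+N}-c\bigr)P-\tfrac{\rho^2}{8}P^{-3/2}\bigl(cP+\tfrac{mNP}{1+N}\bigr)\le-\tfrac{\rho^2c}{8}P^{-1/2}.
\]
This is the drift of a process behaving like a Bessel process of dimension $0$ near $P=0$ (square-root noise, Feller-type boundary). Via a time change/comparison with a squared-Bessel or Feller diffusion $dY=-cY\,dt+\rho\sqrt{cY}\,dB$, which hits $0$ a.s. (its Feller scale and speed test classifies $0$ as exit), conclude $\tau_P<\infty$ a.s.
\item[(e)] The claim $\mathbb P(\tau<\infty)=1$ then follows, since $\tau\le\tau_P$.
\end{itemize}
A cleaner route I would try first for (c)–(d) together: define $u(P)=\int_0^P\exp(2s/\rho^2)\,ds$-type scale functions. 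More directly, the function $f(P)=\sqrt P$ is positive, concave, and has drift $\le-\kappa<0$ on $\{P\le1\}$. Combine this with the supermartingale property (which gives recurrence to $\{P\le1\}$) to show hitting in finite time a.s.

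\textbf{Part (i).} I would avoid support theorems and use that the predator component hits zero with positive probability from any state.
\begin{itemize}
\item[(a)] Fix $z\in U$ and $T>0$.
\item[(b)] On the event that $N$ stays in a bounded set $[0,R]$ up to time $T$, which has positive probability by non-explosion, compare $P$ with a Feller diffusion with bounded drift rate $(m-c)^+$ and noise variance $\ge\rho^2cP$.
\item[(c)] By the time-change argument (Dambis–Dubins–Schwarz), the martingale part of $P$ is a Brownian motion run at clock $\int\rho^2(cP+\dots)\,dt$. It hits level $-(P_0+(m-c)^+RT\cdots)$ with positive probability in a bounded clock time.
\item[(d)] Concretely, use the Girsanov theorem to remove the drift on $[0,T]$ up to the stopping time where $P$ exceeds a large level or $N$ exits $[0,R]$. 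The drift/volatility ratio $(\tfrac{mN}{1+N}-c)P/(\rho\sqrt{cP+\cdots})$ is bounded by $C\sqrt{P}$, hence bounded there, so Novikov holds.
\item[(e)] Under the new measure $P$ is a driftless nonnegative local martingale with diffusion coefficient $\ge\rho\sqrt{cP}$. Such a process hits $0$ before $T$ with positive probability (time-changed Brownian motion with clock rate bounded below away from $0$ while $P\ge\varepsilon$, and Feller-type exit near $0$).
\item[(f)] Equivalence of measures transfers positivity, giving $\mathbb P_z(\tau<\infty)\ge\mathbb P_z(\tau_P\le T)>0$.
\end{itemize}

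\textbf{Main obstacle.} The delicate point in both parts is the boundary behaviour near $P=0$. The diffusion coefficient vanishes like $\sqrt P$ there, so one must show the axis is actually reached (exit boundary) rather than approached asymptotically. I would handle this via the $\sqrt P$ (or $P^{\alpha}$, $\alpha<1$) Lyapunov computation above, which yields strictly negative drift $\le-\kappa P^{-1/2}$ near zero. Together with optional stopping this gives finite expected hitting time of $0$ from $\{P\le1\}$.
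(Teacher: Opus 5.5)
Your steps (a)--(d) only use the dynamics on $[0,\tau)$. What they establish is that $\{\tau=\infty\}$ is null, i.e.\ $\mathbb{P}_z(\tau<\infty)=1$ directly. They do not say which axis is hit, so the premise ``$\tau_P<\infty$ a.s.''\ of step (e) is never proved.

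It cannot be proved for the frozen process. Near $N=0$ the prey is itself Feller-like: $\sqrt N$ has drift $\approx-\rho^2(1+mP)/(8\sqrt N)$ and noise rate at least $\rho^2/4$. Hence $\{\tau_N<\tau_P\}$ has positive probability (e.g.\ from any state with $N_0$ small). On that event $\widehat Z$ sits at $(0,P(\tau))$ with $P(\tau)>0$ forever. Almost sure predator extinction holds only for the optional axis completion, where after prey extinction $P$ follows $dP=-cP\,dt+\rho\sqrt{cP}\,dB$ and hits $0$ a.s. The paper's domination lemma in Appendix C is also posed on $[0,\tau)$ and has the same blind spot, so this is a defect of the statement. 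State and prove $\mathbb{P}_z(\tau<\infty)=1$, and obtain predator extinction from the completion.

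Apart from that, your routes are correct and genuinely different from the paper's.

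For (ii), the paper passes to $X=e^{-A}P$ with $A_t=\int_0^t(\frac{mN}{1+N}-c)\,ds\le0$. This is a nonnegative local martingale with $d\langle X\rangle\ge\rho^2cX\,dt$, and the paper argues via Dambis--Dubins--Schwarz. Since that clock degenerates as $X\to0$, it needs a separate argument to exclude $X_t\to0$ without hitting. Your $\sqrt P$ transform removes the degeneracy at the source, under the same hypotheses. Its drift is $\le0$, and its martingale part $M$ has $d\langle M\rangle=\frac{\rho^2}{4}(c+\frac{mN}{1+N})\,dt\ge\frac{\rho^2c}{4}\,dt$. So $\sqrt{P_t}\le\sqrt{P_0}+M_t$, and $\liminf M=-\infty$ on $\{\tau=\infty\}$ contradicts $P\ge0$. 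This short argument makes (b)--(c) unnecessary. You should also drop two claims:
\begin{enumerate}
\item the comparison with $dY=-cY\,dt+\rho\sqrt{cY}\,dB$, since no pathwise comparison theorem covers one coordinate of a $2$D system with a different diffusion coefficient;
\item the finite expected hitting time of $0$, since optional stopping bounds only the exit time from $(0,1)$.
\end{enumerate}

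For (i), the paper uses positivity of a Dirichlet heat kernel to reach a thin strip $[a/2,2a]\times(0,\varepsilon_*)$. It then applies a scale-function supermartingale comparison with a CIR process and the strong Markov property. This gives a bound uniform over the strip, at the cost of parabolic-regularity input and a smallness choice of $\varepsilon_*$. Your Girsanov route avoids PDE theory; the kernel lies along the second row of $L_{\rm ev}$ and has norm $\le(m+c)\sqrt P/(\rho\sqrt c)$. But step (e) is where the real content lies, and you only name it.

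Close (e) with the same $\sqrt P$ device. Under $\mathbb{Q}$, $\sqrt P$ has drift $-\rho^2(c+\frac{mN}{1+N})/(8\sqrt P)\le0$. Consider the event that the Dambis--Dubins--Schwarz Brownian motion of its martingale part drops below $-\sqrt{P_0}$ before clock time $\rho^2cT/4$; this has positive probability, independent of the localizing box $\{N<R,\,P<K\}$. On this event $\tau\le T$ unless the box is left first. Leaving through $P=K$ has $\mathbb{Q}$-probability at most $P_0/K$, because the stopped $P$ is a bounded $\mathbb{Q}$-martingale. Leaving through $N=R$ is small for large $R$, because the change of measure alters the $N$-drift by at most $|\mu_2|\le(m+c)K$. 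So fix $T$, then choose $K$, then $R$.
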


\begin{remark}[Roadmap to proofs]
The explicit drift/covariance structure and Brownian factorizations are established in
Section~\ref{sec:mu_sigma_factorization}.
Strong well-posedness up to absorption and non-explosion/moment bounds is proved in
Section~\ref{sec:sde_absorption_wellposedness}.
The extinction assertions in Theorem~\ref{thm:intro_extinction} are proved in
Section~\ref{sec:boundary_and_extinction_A}, with details for the extinction in the subcritical regime deferred to Appendix~\ref{app:cir_hitting} and the positive-probability argument deferred to
Appendix~\ref{app:positive_extinction_probability}.
\end{remark}

Section~\ref{sec:deterministic_backbone_A} reviews the deterministic Rosenzweig--MacArthur mean-field backbone needed for parameter interpretation and local stability properties. Section~\ref{sec:mech_ctmc} specifies the mechanistic CTMC through elementary demographic events. Sections~\ref{sec:scaling_diffusion}--\ref{sec:mu_sigma_factorization} introduce the density-dependent scaling,
derive the diffusion approximation by identifying the limiting drift and covariance, and record both the event-based ($4$D) and Cholesky ($2$D) Brownian factorizations. Section~\ref{sec:sde_absorption_wellposedness} introduces the absorbed (frozen) full-covariance SDE and proves strong well-posedness up to the absorption time, together with non-explosion and moment bounds. 
Section~\ref{sec:boundary_and_extinction_A} studies the boundary structure
and establishes extinction results, including a positive probability of extinction for all parameters and almost sure predator extinction in the subcritical regime. Section~\ref{sec:numerics} documents numerical schemes and diagnostics supporting the theoretical statements, with emphasis on factorization consistency and the role of the full covariance. We conclude this work in the Discussion Section.

\section{Deterministic mean-field backbone}
\label{sec:deterministic_backbone_A}

This section records the deterministic Rosenzweig--MacArthur ODE that arises as the LLN
limit of the mechanistic CTMC under Kurtz scaling (Section~\ref{sec:scaling_diffusion}). 
Our purpose here is purely
auxiliary. We fix the notation and biologically meaningful state space and record basic invariance and
boundedness properties (dissipativity). We also summarize deterministic coexistence versus extinction
thresholds for parameter interpretation. We do not pursue the Hopf bifurcation or global planar dynamics in this
paper.

We assume
\begin{equation}\label{eq:det_params_assumptions_A}
k>0,\qquad m>0,\qquad c>0,
\end{equation}
and interpret $N(t)\ge0$ as prey density and $P(t)\ge0$ as predator density.
The biologically meaningful state space is the closed quadrant
\[
\overline U := \{(N,P)\in\mathbb R^2:\ N\ge 0,\ P\ge 0\},\qquad
U := \{(N,P)\in\mathbb R^2:\ N>0,\ P>0\}.
\]

For an initial condition $(N_0,P_0)\in \overline U$ we consider
\begin{equation}\label{eq:RM_ODE}
\left\{
\begin{aligned}
\dot N &= N\Bigl(1-\frac{N}{k}\Bigr)-\frac{mNP}{1+N},\\[1mm]
\dot P &= P\Bigl(\frac{mN}{1+N}-c\Bigr),
\end{aligned}\right.
\qquad (N(0),P(0))=(N_0,P_0).
\end{equation}
Let $F:\overline U\to\mathbb R^2$ denote the vector field in \eqref{eq:RM_ODE}. Note that $F$ is $C^\infty$ on $U$
and locally Lipschitz on $\overline U$.

\begin{lemma}[Local well-posedness]\label{lem:det_local_wellposed_A}
For every $(N_0,P_0)\in \overline U$, there exists a maximal time $T_{\max}\in(0,\infty]$ and a unique maximal
solution
\[
(N,P)\in C^1\bigl([0,T_{\max});\overline U\bigr)
\]
to \eqref{eq:RM_ODE}. Moreover, the solution depends continuously on the initial data on compact time intervals.
\end{lemma}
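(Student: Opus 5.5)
The plan is to extend $F$ to a locally Lipschitz field on an open neighbourhood of $\overline U$, apply Picard--Lindelöf there, and then show that solutions starting in $\overline U$ never leave it. Concretely, I would define $\widetilde F$ on the open set $V:=\{(N,P):N>-1\}\supset\overline U$ by the same formula. The denominator $1+N$ is bounded below by a positive constant on compact subsets of $V$. Every component of $\widetilde F$ is a rational function with no poles in $V$, so $\widetilde F\in C^\infty(V)$ and in particular is locally Lipschitz.

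The classical Picard--Lindelöf theorem, together with the standard continuation argument, then gives the following for every $(N_0,P_0)\in\overline U$:
\begin{itemize}
\item a unique maximal $C^1$ solution on $[0,\widetilde T)$ with values in $V$;
\item continuous dependence on initial data, uniformly on compact subintervals of the existence interval, via Gronwall's inequality applied with the local Lipschitz constant on a compact neighbourhood of the trajectory.
\end{itemize}

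The main step is forward invariance of $\overline U$, so that this solution is actually a solution of \eqref{eq:RM_ODE} in $\overline U$. I would use the product structure of the vector field. Along the solution, write $\dot N = N\,a(t)$ and $\dot P=P\,b(t)$, where
\[
a(t)=1-\frac{N(t)}{k}-\frac{mP(t)}{1+N(t)},\qquad b(t)=\frac{mN(t)}{1+N(t)}-c .
\]
Both $a$ and $b$ are continuous on $[0,\widetilde T)$ because the trajectory stays in $V$. Solving these scalar linear equations gives
\[
N(t)=N_0\exp\Bigl(\int_0^t a\Bigr),\qquad P(t)=P_0\exp\Bigl(\int_0^t b\Bigr).
\]
Hence $N(t)\ge0$ and $P(t)\ge0$ for all $t$, and each coordinate stays strictly positive or identically zero according to the sign of its initial value. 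So the trajectory remains in $\overline U$, and in fact $U$ and each axis are invariant. Setting $T_{\max}:=\widetilde T$ gives the solution claimed in the lemma.

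Uniqueness within the class of $\overline U$-valued $C^1$ solutions follows because any such solution also solves the extended system in $V$, where uniqueness holds. Maximality transfers for the same reason. Continuous dependence is inherited from the extended problem, since the restriction to initial data in $\overline U$ is just a restriction of the domain.

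The only point needing care is the boundary: $F$ is only defined on the closed set $\overline U$, so local Lipschitz continuity has to be read in the sense of an extension. The extension to $V$ resolves this. No finiteness of $T_{\max}$ is claimed here; global existence would come separately from the dissipativity bounds.
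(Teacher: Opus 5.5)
Your proof is correct and uses the same tool as the paper, which asserts that $F$ is locally Lipschitz on $\overline U$ and then cites Picard--Lindel\"of and standard continuous dependence in one line. You also supply the step that one-liner leaves implicit: Picard--Lindel\"of is a statement on open sets, so you extend $F$ smoothly to $\{N>-1\}$ and then use the exponential representation $N(t)=N_0\exp(\int_0^t a)$, $P(t)=P_0\exp(\int_0^t b)$ to keep the trajectory in $\overline U$. This is the same computation the paper does separately in Lemma~\ref{lem:det_positive_invariance_A}, and it is what makes the argument rigorous at the boundary.
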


\begin{proof}
Since $F$ is locally Lipschitz on $\overline U$, existence and uniqueness of a maximal solution follow from the
Picard--Lindel\"of theorem; continuous dependence is standard.
\end{proof}

\begin{lemma}[Positive invariance of $\overline U$]\label{lem:det_positive_invariance_A}
The closed quadrant $\overline U$ is forward invariant for \eqref{eq:RM_ODE}.
In addition, the coordinate axes are invariant.
\end{lemma}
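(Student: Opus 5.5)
The plan is to exploit the fact that each component of the vector field factors through its own variable. Write the system as
\[
\dot N = N\,g(N,P),\qquad \dot P = P\,h(N),\qquad g(N,P):=1-\frac{N}{k}-\frac{mP}{1+N},\quad h(N):=\frac{mN}{1+N}-c .
\]
Both $g$ and $h$ are continuous on $\overline U$. Let $(N,P)$ be the maximal solution on $[0,T_{\max})$ from Lemma~\ref{lem:det_local_wellposed_A}, viewed as a solution of the ODE with locally Lipschitz right-hand side on all of $\mathbb R^2$ (the formula extends smoothly to $N>-1$). Along this solution, $a(t):=g(N(t),P(t))$ and $b(t):=h(N(t))$ are continuous functions of $t$. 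Each component then satisfies a scalar linear ODE with continuous coefficient, so
\[
N(t)=N_0\exp\Bigl(\int_0^t a(s)\,ds\Bigr),\qquad P(t)=P_0\exp\Bigl(\int_0^t b(s)\,ds\Bigr),\qquad t\in[0,T_{\max}).
\]

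Forward invariance follows immediately from these formulas. If $N_0\ge0$ and $P_0\ge0$, then $N(t)\ge0$ and $P(t)\ge0$ for all $t$. Moreover, strict positivity is preserved, so the open quadrant $U$ is also invariant. Since the exponential factor never vanishes in finite time, the sign pattern of the initial data is kept exactly on $[0,T_{\max})$.

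For the axes, the same formulas give $P(t)\equiv0$ whenever $P_0=0$ and $N(t)\equiv0$ whenever $N_0=0$. One can also see this directly by uniqueness. On $\{P=0\}$ the system reduces to the logistic equation $\dot N=N(1-N/k)$, which produces a solution with $P\equiv0$. On $\{N=0\}$ it reduces to $\dot P=-cP$, which produces a solution with $N\equiv0$. By the uniqueness part of Lemma~\ref{lem:det_local_wellposed_A}, these are the solutions. Hence both axes, and the origin, are invariant in both time directions where the solution is defined.

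The only delicate point is a technicality. Lemma~\ref{lem:det_local_wellposed_A} was phrased with solutions valued in $\overline U$, so arguing invariance from it risks circularity. I would avoid this by first solving on the open set $\{N>-1\}\times\mathbb R$, where $F$ is smooth. The integrating-factor argument then shows that this solution stays in $\overline U$, and therefore it coincides with the maximal solution from the lemma. No further obstacle is expected.
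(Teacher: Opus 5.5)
Your proposal is correct and follows essentially the same route as the paper, which likewise writes $\dot N=N\phi(N,P)$, $\dot P=P\psi(N)$ and reads off nonnegativity and invariance of the axes from the integrating-factor formulas $N(t)=N_0\exp\bigl(\int_0^t\phi\,ds\bigr)$ and $P(t)=P_0\exp\bigl(\int_0^t\psi\,ds\bigr)$. Your additional step of first solving on $\{N>-1\}\times\mathbb R$, rather than on ``all of $\mathbb R^2$'' where $F$ is singular at $N=-1$, removes the circularity in Lemma~\ref{lem:det_local_wellposed_A}, which already presumes $\overline U$-valued solutions; the paper leaves this point implicit, so it is a worthwhile tightening.
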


\begin{proof}
On $\overline U$ the predator equation reads $\dot P = P\,\psi(N)$ with
$\psi(N):=\frac{mN}{1+N}-c$, hence
\[
P(t)=P_0\exp\Bigl(\int_0^t \psi(N(s))\,ds\Bigr)\ge 0,
\qquad t\in[0,T_{\max}).
\]
In particular, if $P_0=0$ then $P(t)\equiv 0$.
Similarly, writing $\dot N = N\,\phi(N,P)$ with
\[
\phi(N,P):=\Bigl(1-\frac{N}{k}\Bigr)-\frac{mP}{1+N},
\]
we obtain
\[
N(t)=N_0\exp\Bigl(\int_0^t \phi(N(s),P(s))\,ds\Bigr)\ge 0,
\qquad t\in[0,T_{\max}).
\]
If $N_0=0$ then $N(t)\equiv 0$, which forces $P(t)\to 0$ since $\dot P=-cP$ on $\{N=0\}$.
\end{proof}


We establish dissipativity of \eqref{eq:RM_ODE} by combining a logistic comparison for $N$ with a linear Lyapunov
combination controlling $N+\beta P$. This yields a compact absorbing set in $\overline U$ that is independent of the
initial condition, and in particular implies global existence.

\begin{lemma}[Logistic comparison for the prey]\label{lem:prey_logistic_bound_A}
Let $(N,P)$ be the maximal solution to \eqref{eq:RM_ODE} with $(N_0,P_0)\in\overline U$. Then
\begin{equation}\label{eq:prey_bound_pointwise_A}
0\le N(t)\le \max\{N_0,k\}\qquad \text{for all }t\in[0,T_{\max}).
\end{equation}
Moreover, for every $\delta>0$ there exists $T_\delta=T_\delta(N_0)\ge 0$ such that
\begin{equation}\label{eq:prey_eventual_bound_A}
0\le N(t)\le k+\delta\qquad \text{for all }t\ge T_\delta \text{ with }t<T_{\max}.
\end{equation}
\end{lemma}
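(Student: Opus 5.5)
The plan is a scalar differential inequality for $N$, obtained by discarding the predation term. Lemma~\ref{lem:det_positive_invariance_A} gives $N(t)\ge0$ and $P(t)\ge0$ on $[0,T_{\max})$, so $\frac{mN P}{1+N}\ge 0$. Hence
\[
\dot N \le N\Bigl(1-\frac{N}{k}\Bigr)=:g(N)\qquad\text{on }[0,T_{\max}),
\]
and the lower bound in \eqref{eq:prey_bound_pointwise_A} is already part of Lemma~\ref{lem:det_positive_invariance_A}. I would compare $N$ with the solution $\bar N$ of the logistic equation $\dot{\bar N}=g(\bar N)$, $\bar N(0)=N_0$. Since $g$ is locally Lipschitz, the standard comparison principle for scalar ODEs gives $N(t)\le \bar N(t)$ for all $t<T_{\max}$. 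The explicit logistic solution satisfies the following.
\begin{itemize}
\item If $N_0\le k$, then $\bar N(t)\le k$.
\item If $N_0>k$, then $\bar N$ decreases monotonically to $k$.
\item In either case, $\bar N(t)\le\max\{N_0,k\}$ and $\bar N(t)\to k$.
\end{itemize}
This yields \eqref{eq:prey_bound_pointwise_A}.

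Alternatively, to avoid quoting the comparison theorem, I would give a direct barrier argument for the pointwise bound. Set $M=\max\{N_0,k\}$ and suppose $N$ exceeds $M$. Let $t_0=\sup\{s\le t_1: N(s)\le M\}$, so that $N(t_0)=M$ and $N>M\ge k$ on $(t_0,t_1]$. On that interval $\dot N\le N(1-N/k)<0$, so $N(t_1)<N(t_0)=M$, which is a contradiction.

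For the eventual bound \eqref{eq:prey_eventual_bound_A}, I would use the explicit formula. If $N_0>k$, then
\[
\bar N(t)=\frac{k}{1-(1-k/N_0)e^{-t}},
\]
and $\bar N(t)\le k+\delta$ holds as soon as $e^{-t}(1-k/N_0)\le \delta/(k+\delta)$. So I would take
\[
T_\delta=\log\frac{(k+\delta)(1-k/N_0)}{\delta}\vee 0 .
\]
If $N_0\le k$, I would take $T_\delta=0$, since the first part already gives $N\le k$. Note that $T_\delta$ depends only on $N_0$, as the lemma requires.

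The one delicate point is that $T_{\max}$ might be finite, which is why every statement is restricted to $t<T_{\max}$; the comparison argument is carried out only on that interval. There is also a degenerate case: if $N_0=0$, then $N\equiv0$ by Lemma~\ref{lem:det_positive_invariance_A}, and both bounds hold trivially.
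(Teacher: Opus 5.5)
Your proposal is correct and follows the paper's proof. Like the paper, you drop the nonnegative predation term, compare with the logistic equation $\dot y=y(1-y/k)$, and read off the eventual bound from the explicit logistic solution; your explicit choice of $T_\delta$ and the direct barrier argument simply spell out details the paper states in one line.
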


\begin{proof}
Since $P(t)\ge 0$,
\[
\dot N(t)=N(t)\Bigl(1-\frac{N(t)}{k}\Bigr)-\frac{mN(t)P(t)}{1+N(t)}
\le N(t)\Bigl(1-\frac{N(t)}{k}\Bigr).
\]
Comparison with $\dot y=y(1-y/k)$ yields \eqref{eq:prey_bound_pointwise_A}. The eventual bound
\eqref{eq:prey_eventual_bound_A} follows from the explicit logistic solution: for any $\delta>0$ the logistic
trajectory enters and stays in $[0,k+\delta]$ after some finite time depending only on $N_0$.
\end{proof}

For $\beta\in(0,1]$, define the linear functional
\begin{equation}\label{eq:Sbeta_def_A}
S_\beta(N,P):=N+\beta P.
\end{equation}

\begin{lemma}[Absorbing half-space via $S_\beta$]\label{lem:absorbing_halfspace_A}
Fix $\beta\in(0,1]$ and let $(N,P)$ solve \eqref{eq:RM_ODE}. Then, for all $t\in[0,T_{\max})$,
\begin{equation}\label{eq:Sbeta_diff_ineq_A}
\frac{d}{dt}S_\beta(N(t),P(t))
\le \frac{k}{4}(1+c)^2 - c\,S_\beta(N(t),P(t)).
\end{equation}
Consequently,
\begin{equation}\label{eq:Sbeta_bound_A}
S_\beta(N(t),P(t))
\le S_\beta(N_0,P_0)e^{-ct}+\frac{k(1+c)^2}{4c}\bigl(1-e^{-ct}\bigr),
\qquad t\in[0,T_{\max}),
\end{equation}
and for every $\varepsilon>0$ there exists $T_\varepsilon=T_\varepsilon(N_0,P_0)\ge 0$ such that
\begin{equation}\label{eq:Sbeta_absorption_A}
S_\beta(N(t),P(t))\le \frac{k(1+c)^2}{4c}+\varepsilon
\qquad \text{for all }t\ge T_\varepsilon \text{ with }t<T_{\max}.
\end{equation}
\end{lemma}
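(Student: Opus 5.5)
Differentiate $S_\beta$ along solutions, bound the result by a constant minus $cS_\beta$, and integrate via Grönwall.

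\textbf{Step 1: the derivative.} Along solutions of \eqref{eq:RM_ODE},
\[
\frac{d}{dt}S_\beta = N\Bigl(1-\frac{N}{k}\Bigr) - (1-\beta)\frac{mNP}{1+N} - \beta cP .
\]
Because $\beta\le 1$ and $N,P\ge0$ by Lemma~\ref{lem:det_positive_invariance_A}, the predation term is nonpositive and can be dropped. This gives
\[
\frac{d}{dt}S_\beta \le N - \frac{N^2}{k} - \beta c P .
\]

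\textbf{Step 2: complete the square.} Adding and subtracting $cS_\beta = cN+\beta cP$, the $\beta cP$ terms cancel exactly:
\[
N-\frac{N^2}{k}-\beta cP = (1+c)N-\frac{N^2}{k} - cS_\beta .
\]
For $N\ge0$, the quadratic $(1+c)N - N^2/k$ is maximized at $N=k(1+c)/2$, with maximum value $k(1+c)^2/4$. This yields \eqref{eq:Sbeta_diff_ineq_A}.

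\textbf{Step 3: integrate.} Set $s(t)=S_\beta(N(t),P(t))$ and $A=k(1+c)^2/4$, so that $s'\le A - cs$. Then
\[
\frac{d}{dt}\bigl(e^{ct}s\bigr) = e^{ct}(s'+cs)\le Ae^{ct}.
\]
Integrating over $[0,t]$ gives \eqref{eq:Sbeta_bound_A} for all $t<T_{\max}$.

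\textbf{Step 4: eventual bound.} The right-hand side of \eqref{eq:Sbeta_bound_A} is at most $A/c + S_\beta(N_0,P_0)e^{-ct}$. Choose
\[
T_\varepsilon = \frac{1}{c}\,\log^+\!\bigl(S_\beta(N_0,P_0)/\varepsilon\bigr).
\]
Then $S_\beta(N_0,P_0)e^{-ct}\le\varepsilon$ for $t\ge T_\varepsilon$, which is \eqref{eq:Sbeta_absorption_A}.

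No step is a genuine obstacle. The only point needing care is the sign of the cross term, which relies on $\beta\le1$ and on the invariance of $\overline U$.
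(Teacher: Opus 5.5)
Your proof is correct and follows essentially the same route as the paper, which also rewrites $\frac{d}{dt}S_\beta$ as $N(1-N/k)-\beta cP-(1-\beta)\frac{mNP}{1+N}$, drops the last term, adds and subtracts $cS_\beta$, maximizes the concave quadratic $(1+c)N-N^2/k$, and integrates the resulting scalar inequality. Your explicit choice $T_\varepsilon=\frac{1}{c}\log^+\bigl(S_\beta(N_0,P_0)/\varepsilon\bigr)$ simply makes the paper's final step concrete.
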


\begin{proof}
Differentiate $S_\beta$ along \eqref{eq:RM_ODE}:
\[
\frac{d}{dt}S_\beta
=\dot N+\beta \dot P
= N\Bigl(1-\frac{N}{k}\Bigr)-\frac{mNP}{1+N}+\beta P\Bigl(\frac{mN}{1+N}-c\Bigr).
\]
Rearranging,
\begin{align*}
\frac{d}{dt}S_\beta
&= N\Bigl(1-\frac{N}{k}\Bigr)-\beta cP-(1-\beta)\frac{mNP}{1+N}
\le N\Bigl(1-\frac{N}{k}\Bigr)-\beta cP \\[4pt]
&\le N\Bigl(1-\frac{N}{k}\Bigr)+cN-cS_\beta
=(1+c)N-\frac{N^2}{k}-cS_\beta.
\end{align*}
Maximizing the concave quadratic in $N\ge 0$ yields
\[
(1+c)N-\frac{N^2}{k}\le \frac{k}{4}(1+c)^2,
\]
which gives \eqref{eq:Sbeta_diff_ineq_A}. Solving the scalar differential inequality yields \eqref{eq:Sbeta_bound_A},
and \eqref{eq:Sbeta_absorption_A} follows.
\end{proof}

For $\delta,\varepsilon>0$ and $\beta\in(0,1]$, define the compact set
\begin{equation}\label{eq:absorbing_set_def_A}
\mathcal A_{\beta,\varepsilon,\delta}
:=\Bigl\{(N,P)\in\overline U:\ 0\le N\le k+\delta,\ \ 0\le N+\beta P\le \frac{k(1+c)^2}{4c}+\varepsilon\Bigr\}.
\end{equation}

\begin{theorem}[Dissipativity, absorbing set, and global existence]\label{thm:absorbing_set_global_A}
Fix $\beta\in(0,1]$ and $\varepsilon,\delta>0$. For every initial condition $(N_0,P_0)\in\overline U$, the
corresponding solution of \eqref{eq:RM_ODE} is global, i.e.\ $T_{\max}=\infty$, and there exists a finite time
$T=T(N_0,P_0;\beta,\varepsilon,\delta)\ge 0$ such that
\[
(N(t),P(t))\in \mathcal A_{\beta,\varepsilon,\delta}
\qquad\text{for all }t\ge T.
\]
In particular, \eqref{eq:RM_ODE} is dissipative on $\overline U$ and admits a uniform absorbing set
(independent of the initial condition).
\end{theorem}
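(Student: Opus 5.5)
The proof combines the a priori bounds of Lemmas~\ref{lem:prey_logistic_bound_A} and~\ref{lem:absorbing_halfspace_A} with the standard continuation criterion for maximal ODE solutions. There are three steps: first global existence, then entry into $\mathcal A_{\beta,\varepsilon,\delta}$, then the observation that the absorbing set does not depend on the initial data.

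\emph{Global existence.} Let $(N,P)$ be the maximal solution from Lemma~\ref{lem:det_local_wellposed_A}. By Lemma~\ref{lem:det_positive_invariance_A} it stays in $\overline U$. By \eqref{eq:Sbeta_bound_A}, for $t<T_{\max}$ we have
\[
0\le N(t)+\beta P(t)\le S_\beta(N_0,P_0)+\frac{k(1+c)^2}{4c}.
\]
Since $\beta>0$ and both coordinates are nonnegative, this gives $|N(t)|+|P(t)|\le C(N_0,P_0,\beta)$ uniformly on $[0,T_{\max})$. If $T_{\max}<\infty$, the solution would remain in a compact subset of $\overline U$, where $F$ is Lipschitz. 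The standard blow-up alternative then lets it be extended past $T_{\max}$, contradicting maximality. Hence $T_{\max}=\infty$.

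\emph{Entry into the absorbing set.} With $T_{\max}=\infty$, Lemma~\ref{lem:prey_logistic_bound_A} supplies $T_\delta$ such that $N(t)\le k+\delta$ for all $t\ge T_\delta$. Likewise \eqref{eq:Sbeta_absorption_A} supplies $T_\varepsilon$ such that
\[
N(t)+\beta P(t)\le \frac{k(1+c)^2}{4c}+\varepsilon \quad\text{for all } t\ge T_\varepsilon.
\]
It suffices to exhibit $T_\varepsilon$ explicitly from \eqref{eq:Sbeta_bound_A}: the excess over $k(1+c)^2/(4c)$ is at most $S_\beta(N_0,P_0)e^{-ct}$, so one may take $T_\varepsilon=c^{-1}\log^+\bigl(S_\beta(N_0,P_0)/\varepsilon\bigr)$. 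Then $T:=\max\{T_\delta,T_\varepsilon\}$ works, because nonnegativity of the coordinates holds by invariance.

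\emph{Uniformity and dissipativity.} The set $\mathcal A_{\beta,\varepsilon,\delta}$ is defined through $k,c,\beta,\varepsilon,\delta$ only, so it is independent of $(N_0,P_0)$. It is closed and bounded, hence compact. Every trajectory enters it in finite time and stays there, which is dissipativity. The entry time is uniform over bounded sets of initial data, since $T_\delta$ and $T_\varepsilon$ depend monotonically on $N_0$ and $S_\beta(N_0,P_0)$. For the logistic comparison the explicit solution gives this monotonicity directly.

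The main obstacle is minor: it is justifying that the eventual bounds of Lemmas~\ref{lem:prey_logistic_bound_A} and~\ref{lem:absorbing_halfspace_A} apply on the whole half-line. This requires the first step to be done before invoking them.
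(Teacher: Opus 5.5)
Your proof is correct and follows the paper's argument: the same two lemmas, the choice $T=\max\{T_\delta,T_\varepsilon\}$, and the continuation criterion to get $T_{\max}=\infty$. Your ordering is the more careful one. The paper infers $T_{\max}=\infty$ from the \emph{eventual} bounds, which say nothing if $T_{\max}$ were smaller than the entry time. You instead obtain global existence first from the bound \eqref{eq:Sbeta_bound_A}, which holds on all of $[0,T_{\max})$.
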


\begin{proof}
Lemma~\ref{lem:prey_logistic_bound_A} yields that for the given $\delta>0$ there exists $T_1$ such that
$N(t)\le k+\delta$ for all $t\ge T_1$.
Lemma~\ref{lem:absorbing_halfspace_A} yields that for the given $\varepsilon>0$ there exists $T_2$ such that
$N(t)+\beta P(t)\le \frac{k(1+c)^2}{4c}+\varepsilon$ for all $t\ge T_2$.
Thus, with $T:=\max\{T_1,T_2\}$ we have $(N(t),P(t))\in \mathcal A_{\beta,\varepsilon,\delta}$ for all $t\ge T$.

Finally, since $F$ is locally Lipschitz and solutions remain in the compact set $\mathcal A_{\beta,\varepsilon,\delta}$
for all sufficiently large times, no finite-time blow-up can occur. Hence $T_{\max}=\infty$.
\end{proof}

Figure~\ref{fig:absorbing_set} shows the absorbing set, providing a visual proof for dissipativity.

\begin{figure}[htbp]
    \centering
    \includegraphics[width=0.9\linewidth]{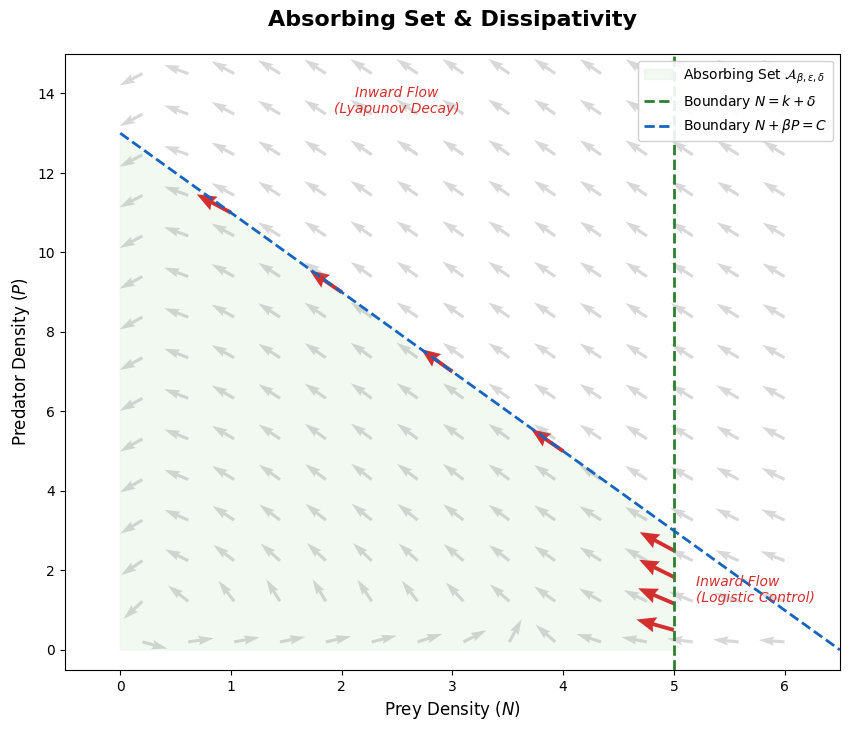}
\caption{\textbf{Visual proof of dissipativity and the absorbing set.} 
Illustration of the compact invariant region $\mathcal{A}_{\beta,\varepsilon,\delta}$ defined in Theorem~\ref{thm:absorbing_set_global_A}. The region (light green) is bounded by the prey logistic constraint $N=k+\delta$ (green dashed line) and the linear Lyapunov constraint $N+\beta P=C$ (blue dashed line). The red arrows along the boundaries explicitly show the inward direction of the vector field. They demonstrate that any trajectory starting outside will eventually enter and remain within this set, thereby ensuring global existence and non-explosion for the deterministic backbone.}
\label{fig:absorbing_set}
\end{figure}

We summarize equilibria and deterministic extinction/coexistence thresholds for later parameter interpretation.
Define, when $m>c$,
\begin{equation}\label{eq:Nstar_def_A}
N^\ast := \frac{c}{m-c}.
\end{equation}

\begin{proposition}[Equilibria]\label{prop:equilibria_A}
The equilibria of \eqref{eq:RM_ODE} in $\overline U$ are:
\begin{enumerate}
\item The origin $K_0=(0,0)$.
\item The prey-only equilibrium $K_1=(k,0)$.
\item If $m>c$ and $N^\ast<k$, there exists a unique coexistence equilibrium
\[
K_3=(N^\ast,P^\ast),\qquad 
P^\ast := \frac{1+N^\ast}{m}\Bigl(1-\frac{N^\ast}{k}\Bigr)>0.
\]
If $m\le c$ or $m>c$ with $N^\ast\ge k$, then no positive equilibrium exists.
\end{enumerate}
\end{proposition}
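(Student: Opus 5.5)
The plan is to solve $F(N,P)=0$ directly on $\overline U$, splitting first according to whether $P=0$ and then according to whether $N=0$.

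\textbf{Case $P=0$.} The predator equation holds trivially. The prey equation reduces to $N(1-N/k)=0$, so $N=0$ or $N=k$. This gives exactly $K_0$ and $K_1$.

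\textbf{Case $P>0$, $N=0$.} The prey equation holds trivially. The predator equation becomes $-cP=0$, which contradicts $P>0$ because $c>0$. So there are no equilibria on the open $P$-axis, which agrees with the observation in Lemma~\ref{lem:det_positive_invariance_A} that $\dot P=-cP$ on $\{N=0\}$.

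\textbf{Case $N>0$, $P>0$.} Dividing the predator equation by $P$ gives $\psi(N)=\frac{mN}{1+N}-c=0$, that is, $(m-c)N=c$.

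If $m\le c$, the left side is $\le 0<c$, so there is no solution.

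If $m>c$, the map $N\mapsto mN/(1+N)$ is strictly increasing from $0$ to $m$ on $[0,\infty)$. Hence the unique solution is $N=N^\ast=c/(m-c)$.

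Dividing the prey equation by $N$ gives $\phi(N,P)=0$, i.e. $1-N/k=mP/(1+N)$. At $N=N^\ast$ this determines $P$ uniquely as $P^\ast=\frac{1+N^\ast}{m}(1-N^\ast/k)$. We have $P^\ast>0$ if and only if $N^\ast<k$. When $N^\ast\ge k$, this $P$ is $\le0$, so no point of $U$ is an equilibrium.

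Combining the three cases yields the stated list, together with uniqueness of $K_3$.

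No real obstacle arises; the only point needing care is that uniqueness of $N^\ast$ rests on the strict monotonicity of the Holling type II response $mN/(1+N)$.
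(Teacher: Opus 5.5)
Your proof is correct and follows essentially the same route as the paper, which splits the predator equation into $P=0$ (giving $K_0$ and $K_1$) versus $\psi(N)=0$ (giving $N=N^\ast$ and then $P=P^\ast$ from the prey equation). You make explicit the subcases the paper leaves implicit: there is no equilibrium with $N=0<P$, and $(m-c)N=c$ has no root when $m\le c$. Note that uniqueness of $N^\ast$ already follows from this linear equation, so your monotonicity remark is not needed.
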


\begin{proof}
Equilibria satisfy $\dot N=\dot P=0$. From $\dot P=P(\frac{mN}{1+N}-c)$ we obtain $P=0$ or
$\frac{mN}{1+N}=c$, the latter equivalent to $N=N^\ast$ when $m>c$. If $P=0$, then $\dot N=N(1-N/k)$ gives $N\in\{0,k\}$.
If $m>c$ and $N=N^\ast$, then $\dot N=0$ yields $P=h(N^\ast)$ with $h(N)=\frac{1+N}{m}(1-\frac{N}{k})$; positivity
requires $N^\ast<k$.
\end{proof}

Write the predator per-capita growth rate as
\begin{equation}\label{eq:psi_def_A}
\psi(N):=\frac{mN}{1+N}-c,
\qquad\text{so that}\qquad 
\dot P=\psi(N)\,P.
\end{equation}

\begin{theorem}[Deterministic predator--prey dynamics]\citep{grunert_evolutionarily_2021,cheng_uniqueness_1981}\label{thm:det_predator_extinction}
Let $(N(t),P(t))$ solve \eqref{eq:RM_ODE} with $N_0\in (0,K)$ and $P_0\in (0,\infty)$. Then we have the following system dynamics:
\begin{enumerate}[label=(\roman*)]
\item If $m\le c$ or $k\le N^\ast$, then the critical point $K_1$ is asymptotically stable and 
\[
\lim_{t\to \infty}\,N(t) = K,\quad \lim_{t\to \infty}\,P(t) = 0.
\]
\item If $N^\ast<K\le 1 + 2N^\ast$, then the critical point $K_3$ is asymptotically stable and 
\[
\lim_{t\to \infty}\,N(t) = N^\ast,\quad \lim_{t\to \infty}\,P(t) = P^\ast.
\]
\item If $K> 1+2N^\ast$, then $K_3$ is unstable and there exists exactly one periodic orbit in the first quadrant in the $(N,P)$ plane, which is an (asymptotically) stable limit cycle.
\end{enumerate}
\end{theorem}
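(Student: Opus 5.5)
We plan to argue each of the three regimes separately. Throughout we use Theorem~\ref{thm:absorbing_set_global_A}: every trajectory is global and bounded, so its $\omega$-limit set is a nonempty, compact, invariant subset of $\overline U$. Since $N_0\in(0,k)$ and $P_0>0$, Lemma~\ref{lem:det_positive_invariance_A} keeps the solution in $U$ for all finite times. We read the statement's $K$ as the carrying capacity $k$.

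\textbf{Case (i).} If $m\le c$, then $\psi(N)=\frac{mN}{1+N}-c<0$ for every $N\ge0$. If instead $m>c$ and $k\le N^\ast$, the eventual bound $N\le k+\delta$ from Lemma~\ref{lem:prey_logistic_bound_A} gives $\psi(N(t))\le \psi(k+\delta)$. This is negative when $k<N^\ast$ and $\delta$ is small. In either situation $P(t)\to0$ exponentially, by the representation $P(t)=P_0\exp(\int_0^t\psi)$. Once $P\to0$, the prey equation is an asymptotically autonomous perturbation of the logistic law. The limit set lies in the invariant set $\{P=0\}$ and does not contain the repelling origin, since $\dot N\ge N(1-N/k-mP)$ and $P\to0$. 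Therefore $N\to k$. Local asymptotic stability of $K_1$ follows from the Jacobian there, whose eigenvalues are $-1$ and $\frac{mk}{1+k}-c$, both negative. The borderline case $k=N^\ast$ is delicate: there $\psi(N)<0$ only for $N<k$, so we will use the Lyapunov function $V=\int_k^N\frac{\xi-k}{\xi}\,d\xi\cdot(\dots)+P$ together with LaSalle's principle. We also handle the sub-case $m=c$ this way.

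\textbf{Case (ii).} When $N^\ast<k\le 1+2N^\ast$, the Jacobian at $K_3$ has trace $N^\ast h'(N^\ast)$, where $h(N)=\frac{(1+N)(1-N/k)}{m}$ is the prey isocline. Since $h'(N^\ast)\le0$ exactly when $N^\ast\ge (k-1)/2$, the trace is nonpositive, and the determinant is positive. This gives local stability; the critical case $k=1+2N^\ast$ needs a first Lyapunov coefficient computation or the Lyapunov function below. For global convergence we use the standard Hsu-type Lyapunov function
\[
V(N,P)=\int_{N^\ast}^N\frac{\phi(\xi)-\phi(N^\ast)}{\phi(\xi)}\,d\xi+\frac{\phi(N^\ast)}{\cdot}\int_{P^\ast}^P\frac{\eta-P^\ast}{\eta}\,d\eta,
\]
with $\phi(N)=N/(1+N)$. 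Along trajectories, $\dot V$ reduces to a multiple of $(N-N^\ast)(h(N)-h(N^\ast))$. This is $\le0$ precisely because $h$ is decreasing on $[N^\ast,\infty)$ and $h(N)\le h(N^\ast)$ on $[0,N^\ast]$ under the condition $k\le1+2N^\ast$. LaSalle's principle then gives convergence to $K_3$. The boundary equilibria are excluded from the $\omega$-limit set because they are saddles whose stable manifolds are the axes.

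\textbf{Case (iii) and the main obstacle.} When $k>1+2N^\ast$, the trace at $K_3$ is positive, so $K_3$ is an unstable focus or node. $K_1$ is a saddle whose unstable manifold enters $U$, and $K_0$ is a saddle. Poincar\'e--Bendixson applied on the compact absorbing set $\mathcal A_{\beta,\varepsilon,\delta}$ then yields at least one periodic orbit surrounding $K_3$. The hard part is \emph{uniqueness} of the limit cycle. Our plan is to transform the system to a Li\'enard form $\dot x=-\varphi(y)$, $\dot y=F'$-type, and apply Zhang Zhifen's uniqueness theorem. This is the argument of \citep{cheng_uniqueness_1981}, and we will cite it rather than reproduce it, because the monotonicity condition on $h'(N)/\phi(N)$-type quotients requires a delicate calculus check. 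Given uniqueness, the orbit is the $\omega$-limit set of every non-equilibrium interior trajectory, and is therefore orbitally asymptotically stable.
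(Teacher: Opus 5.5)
\textbf{Gap in case (ii).} Your Lyapunov argument does not cover the full range $N^\ast<k\le 1+2N^\ast$. With $\phi(N)=N/(1+N)$ and $h(N)=\frac{(1+N)(1-N/k)}{m}$, the weight on the $P$-integral must be $1$; any other weight leaves uncancelled $P$-terms. Your function then gives
\[
\dot V=m\bigl(\phi(N)-\phi(N^\ast)\bigr)\bigl(h(N)-h(N^\ast)\bigr).
\]
For $N<N^\ast$ you therefore need $h(N)\ge h(N^\ast)$, not $h(N)\le h(N^\ast)$ as you wrote. Since $h$ is a concave parabola with vertex at $(k-1)/2$, its minimum over $[0,N^\ast]$ is attained at an endpoint. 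The required condition is thus $h(0)=1/m\ge h(N^\ast)=P^\ast$, which is equivalent to $k\le 1+N^\ast$, not $k\le 1+2N^\ast$. On the range $1+N^\ast<k\le 1+2N^\ast$ you get $\dot V>0$ on the whole strip $0<N<k-1-N^\ast$, for every $P>0$. For example, $(m,c,k)=(2,0.8,2)$ gives $N^\ast=2/3$, $k=2<7/3=1+2N^\ast$, yet $h(0)=1/2<P^\ast=5/9$. So LaSalle yields nothing there. The critical case $k=1+2N^\ast$, which you deferred to ``the Lyapunov function below'', lies in this bad range too.

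\textbf{What is missing, and what is fine.} You need an argument that excludes closed orbits around $K_3$ when $1+N^\ast<k\le 1+2N^\ast$. At $k=1+2N^\ast$ you also need a weak-focus computation showing the first Lyapunov coefficient is negative. This is where the classical literature goes beyond Volterra-type functions. One route is the reflection argument of Cheng, Hsu and Lin. Another is a Li\'enard transformation combined with Zhang Zhifen's theorem, which shows that any limit cycle is unique and hyperbolically stable. That is incompatible with a locally asymptotically stable $K_3$, by Poincar\'e--Bendixson on the annulus between them. Once cycles are excluded, Poincar\'e--Bendixson plus your saddle-exclusion argument gives convergence to $K_3$. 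The paper itself gives no proof of this theorem, only the citations \citep{grunert_evolutionarily_2021,cheng_uniqueness_1981}. So for (ii) you should either cite such a result, as you already do for uniqueness in (iii), or carry out one of these arguments. The rest of your plan is sound. In (i), your borderline function works with the factor $1/(1+k)$, because $\frac{\phi(\xi)-\phi(k)}{\phi(\xi)}=\frac{\xi-k}{(1+k)\xi}$; this gives $\dot V=m(\phi(N)-\phi(k))h(N)\le 0$ when $k=N^\ast$. The case $m\le c$ needs no Lyapunov function: with $M:=\max\{N_0,k\}$ you have $\psi(N(t))\le -m/(1+M)$, so $P$ decays exponentially. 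One minor slip: the trace at $K_3$ is $c\,h'(N^\ast)$, not $N^\ast h'(N^\ast)$; the sign conclusion is unaffected.
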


Figure~\ref{fig:dynamic_regimes} shows three dynamic regimes of the deterministic ODE \eqref{eq:RM_ODE}.

\begin{figure}[htbp]
    \centering
    \includegraphics[width=0.75\linewidth]{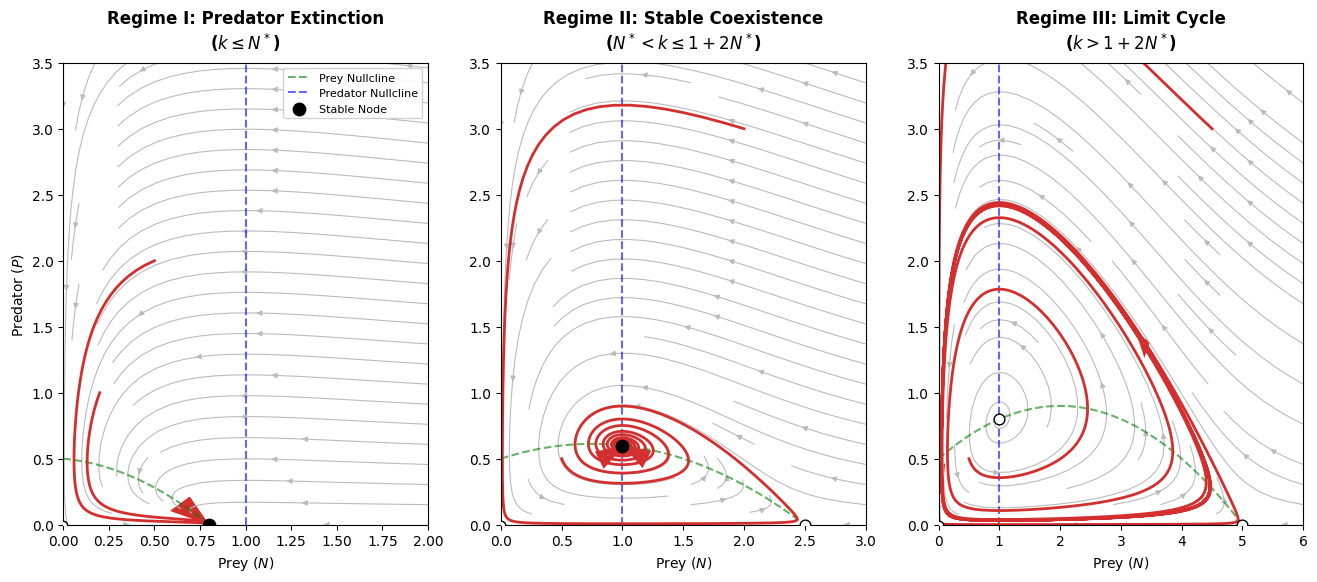}
\caption{\textbf{The three dynamic regimes of the deterministic backbone.}
Numerical illustration of the stability thresholds detailed in Theorem~\ref{thm:det_predator_extinction}.
\textbf{(Left) Regime I:} Low carrying capacity ($k \le N^*$) leads to predator extinction; trajectories converge to $(k,0)$.
\textbf{(Center) Regime II:} Intermediate carrying capacity ($N^* < k \le 1+2N^*$) ensures stable coexistence; trajectories spiral into the positive equilibrium $K_3$.
\textbf{(Right) Regime III:} High carrying capacity ($k > 1+2N^*$) destabilizes $K_3$ via a Hopf bifurcation, resulting in a stable limit cycle. This transition illustrates the paradox of Enrichment, where increasing resource availability (larger $k$) induces large-amplitude oscillations.}
\label{fig:dynamic_regimes}
\end{figure}

\begin{remark}[Dynamics beyond the present scope]\label{rmk:hopf_bifurcation_dynamics}
The quantity $1+2N^\ast$ is the Hopf bifurcation threshold for $k$, where the system undergoes bifurcation. When $k$ increases and exceeds $1+2N^*$, the interior equilibrium $K_3$ loses stability and an asymptotically stable limit cycle emerges, where the predator--prey system manifests as periodically oscillating population cycles. This bifurcation phenomenon facilitates the deterministic Rosenzweig–MacArthur predator–prey model to explain the ``paradox of enrichment'' \citep{gounand_paradox_2014}. Related algebraic–spectral threshold analyses and explicit Hopf onset criteria for Leslie–Gower-type systems have recently been developed, including discrete–continuous stability and bifurcation results \citep{wang_algebraicspectral_2026}.  
\end{remark}

\section{Mechanistic event model: a CTMC with absorbing axes}
\label{sec:mech_ctmc}

We introduce a mechanistic demographic model at the jump-process level by specifying a CTMC for prey--predator counts. The model is constructed from elementary demographic events (birth,
competition, predation/conversion, and death) with state-dependent intensities. This event setting is the input
for the system-size scaling and diffusion approximation developed in
Sections~\ref{sec:scaling_diffusion}--\ref{sec:mu_sigma_factorization}.

\subsection{State space, event channels, increments, and intensities}
\label{subsec:ctmc_specification_A}

Let
\[
X(t)=(\mathcal N(t),\mathcal P(t))^\top\in\mathbb{N}_0^2
\]
denote prey and predator counts at time $t\ge 0$. Conditional on the current state $(n,p)\in\mathbb{N}_0^2$, the
process experiences one of finitely many elementary events $e\in\mathcal E=\{B,C,D,E\}$. Each event has a fixed
increment $\Delta_e\in\mathbb{Z}^2$ and an intensity $\Lambda_e(n,p)\ge 0$.

The four event channels are:
\begin{center}
\begin{tabular}{llll}
\hline
Event $e$ & Interpretation & Increment $\Delta_e$ & Intensity $\Lambda_e(n,p)$\\
\hline
$B$ & prey birth & $(1,0)^\top$ & $n$\\
$C$ & prey competition death & $(-1,0)^\top$ & $\dfrac{n^2}{k}$\\[1ex]
$D$ & predator death & $(0,-1)^\top$ & $c\,p$\\
$E$ & predation + conversion & $(-1,1)^\top$ & $\dfrac{mnp}{1+n}$\\[1ex]
\hline
\end{tabular}
\end{center}
Here $k>0$ is the prey carrying-capacity parameter, $m>0$ is the maximal predation/conversion rate, and $c>0$ is the
predator mortality rate. The coupled predation/conversion event $E$ changes both coordinates simultaneously and is the
source of the negative prey--predator noise covariance in the diffusion approximation
(Section~\ref{sec:mu_sigma_factorization}).

For random time-change representation, let $(Y_e)_{e\in\mathcal E}$ be independent unit-rate Poisson processes. Define the event counting processes
\[
N_e(t):=Y_e\!\left(\int_0^t \Lambda_e\bigl(X(s)\bigr)\,ds\right),
\qquad e\in\mathcal E,
\]
so that $N_e$ has predictable intensity $\Lambda_e(X(t))$ with respect to the natural filtration. Then the CTMC admits
the exact stochastic integral representation
\begin{equation}\label{eq:ctmc_jump_representation_A}
X(t)=X(0)+\sum_{e\in\mathcal E}\Delta_e\,N_e(t),
\qquad t\ge 0,
\end{equation}
equivalently, in differential form,
\begin{equation}\label{eq:ctmc_differential_form_A}
dX(t)=\sum_{e\in\mathcal E}\Delta_e\,dN_e(t).
\end{equation}
Writing components,
\begin{equation}\label{eq:ctmc_components_A}
\begin{aligned}
d\mathcal N(t) &= dN_B(t)-dN_C(t)-dN_E(t),\\[4pt]
d\mathcal P(t) &= dN_E(t)-dN_D(t).
\end{aligned}
\end{equation}

\subsection{Absorbing axes (demographic extinction)}
\label{subsec:ctmc_absorbing_axes}

The coordinate axes are absorbing for the CTMC. Indeed, if $\mathcal N(t)=0$ then
$\Lambda_B=\Lambda_C=\Lambda_E=0$, so $\mathcal N(s)=0$ for all $s\ge t$. Similarly, if $\mathcal P(t)=0$ then
$\Lambda_D=\Lambda_E=0$, so $\mathcal P(s)=0$ for all $s\ge t$. Thus demographic extinction at the count level is
irreversible.

The event formulation above will be carried through a density-dependent scaling to obtain a diffusion approximation.
Because the predation--conversion event $E$ changes prey and predator in opposite directions within the same
reaction channel, it induces an instantaneous negative cross-covariance in the diffusion matrix
$\Sigma=\sum_e \lambda_e\,\Delta_e\Delta_e^\top$ (Section~\ref{sec:mu_sigma_factorization}).

\section{System-size scaling and diffusion approximation}
\label{sec:scaling_diffusion}

The CTMC in Section~\ref{sec:mech_ctmc} is formulated at the level of integer counts and therefore carries a natural system-size parameter. 
Two diffusion-based approximations are commonly employed for Markov chain models: the linear noise approximation \citep{kampen_power_1961,kampen_stochastic_2007} and the chemical Langevin equation \citep{gillespie_chemical_2000,gillespie_chemical_2002,kurtz_strong_1978}. Linearizing stochastic fluctuations around the deterministic limit derive the linear noise approximation. While this approximation is mathematically well defined for all times, it often evolves outside the positive orthant and may therefore yield nonphysical negative concentration values. Moreover, it is well known that this approach can inadequately represent fluctuations arising from nonlinear reaction rates \citep{wallace_linear_2012}. By contrast, the chemical Langevin equation typically provides a more accurate description in the presence of nonlinearities. However, it is generally only defined up to the first time the process reaches the boundary of the positive orthant. Indeed, because the diffusion coefficients usually contain square roots of molecular concentrations, the unstopped equation becomes ill-posed beyond this point \citep{manninen_developing_2006,szpruch_comparing_2010,wilkie_positivity_2008}.

In this work, we adopt the chemical Langevin approach and focus on the system evolution up to the first hitting time of the domain boundary. Our interpretation of the evolution beyond the first hitting time is freezing the trajectory upon the hitting time, i.e., the trajectory remains where it hits the boundary thereafter. To obtain a diffusion approximation with an explicit noise amplitude and a transparent
deterministic limit, we introduce a scaling parameter $\Omega\ge 1$ (effective population size/volume) and work with density variables. 
The resulting limit falls under the classical density-dependent Markov chain framework of Kurtz \citep{kurtz_solutions_1970,kurtz_limit_1971,kurtz_limit_1976,ethier_markov_1986}.


For each $\Omega\ge 1$, let $X^\Omega(t)=(\mathcal N^\Omega(t),\mathcal P^\Omega(t))^\top$ be a CTMC on
$\mathbb{N}_0^2$ with the same event increments $\Delta_e$ as in Section~\ref{subsec:ctmc_specification_A}.
We define the density (rescaled) process
\begin{equation}\label{eq:density_definition_A}
Z^\Omega(t)=\bigl(N^\Omega(t),P^\Omega(t)\bigr)^\top
:=\Omega^{-1}X^\Omega(t)\in \Omega^{-1}\mathbb{N}_0^2\subset\mathbb{R}_+^2.
\end{equation}
Under this scaling, each event changes the density by $\Omega^{-1}\Delta_e$, while the expected number of events over
$O(1)$ time intervals is $O(\Omega)$.

For Density-dependent intensities, we choose $\Omega$-dependent intensities of density-dependent form
\begin{equation}\label{eq:intensities_scaled_A}
\Lambda_e^\Omega(x)=\Omega\,\lambda_e(\Omega^{-1}x),
\qquad x\in\mathbb{N}_0^2,\quad e\in\mathcal E,
\end{equation}
where the rate functions $\lambda_e:\mathbb{R}_+^2\to\mathbb{R}_+$ are
\begin{equation}\label{eq:rate_functions_A}
\lambda_B(N,P)=N,\qquad
\lambda_C(N,P)=\frac{N^2}{k},\qquad
\lambda_D(N,P)=cP,\qquad
\lambda_E(N,P)=\frac{mNP}{1+N}.
\end{equation}
With this choice, the random time-change representation in \eqref{eq:ctmc_jump_representation_A} yields
\begin{equation}\label{eq:density_jump_representation_A}
Z^\Omega(t)=Z^\Omega(0)+\sum_{e\in\mathcal E}\frac{\Delta_e}{\Omega}\,
Y_e\!\left(\Omega\int_0^t \lambda_e\bigl(Z^\Omega(s)\bigr)\,ds\right),
\end{equation}
where $(Y_e)_{e\in\mathcal E}$ are independent unit-rate Poisson processes.

For drift--martingale decomposition and the noise scale. Write $\widetilde Y_e(u)=Y_e(u)-u$ for the compensated Poisson martingales. Substituting
$Y_e(u)=u+\widetilde Y_e(u)$ into \eqref{eq:density_jump_representation_A} gives the semimartingale decomposition
\begin{equation}\label{eq:density_semimartingale_A}
Z^\Omega(t)
=
Z^\Omega(0)
+\int_0^t \mu\bigl(Z^\Omega(s)\bigr)\,ds
+\rho\,M^\Omega(t),
\end{equation}
where the deterministic drift is
\begin{equation}\label{eq:mu_definition_A}
\mu(z)=\sum_{e\in\mathcal E}\Delta_e\,\lambda_e(z),
\qquad z\in\mathbb{R}_+^2,
\end{equation}
and we introduce the canonical demographic noise amplitude
\begin{equation}\label{eq:rho_definition_A}
\rho=\Omega^{-1/2}.
\end{equation}
The fluctuation term $M^\Omega$ is an $\mathbb{R}^2$-valued martingale defined by
\begin{equation}\label{eq:martingale_MOmega_A}
M^\Omega(t)
:=
\frac{1}{\sqrt{\Omega}}\sum_{e\in\mathcal  E} \Delta_e\,
\widetilde Y_e\!\left(\Omega\int_0^t \lambda_e(Z^\Omega(s))\,ds\right).
\end{equation}
Its predictable quadratic variation is
\begin{equation}\label{eq:quad_var_MOmega_A}
\left\langle M^\Omega\right\rangle(t)
=
\int_0^t \Sigma\bigl(Z^\Omega(s)\bigr)\,ds,
\qquad
\Sigma(z):=\sum_{e\in\mathcal E}\lambda_e(z)\,\Delta_e\Delta_e^\top.
\end{equation}
Thus $\Sigma$ is $O(1)$ while the prefactor $\rho=\Omega^{-1/2}$ in \eqref{eq:density_semimartingale_A} exhibits the
$\Omega^{-1/2}$ fluctuation scale.

For LLN and diffusion limit (informal statement), the following is the canonical limiting picture for density-dependent Markov chains. We state it informally since it is
standard; precise hypotheses and proofs can be found in \citep{kurtz_solutions_1970,kurtz_limit_1971,kurtz_limit_1976,ethier_markov_1986}. The general theme that links stochastic microscale rules to deterministic macroscopic limits also appears in hybrid PDE–ABM settings. Under these settings, mean-field PDE limits are derived from Gillespie-driven agent dynamics \citep{wang_analysis_2025}.  

\begin{proposition}[LLN/CLT limits for density-dependent scaling (informal)]\label{prop:lln_clt_informal_A}
Let $Z^\Omega$ be defined by \eqref{eq:density_definition_A}--\eqref{eq:density_jump_representation_A} with rate
functions \eqref{eq:rate_functions_A}. Assume $Z^\Omega(0)\to z_0\in\mathbb{R}_+^2$ as $\Omega\to\infty$.
\begin{enumerate}
\item[(LLN)] $Z^\Omega$ converges in probability, uniformly on compact time intervals, to the unique solution $z(\cdot)$
of the mean-field ODE
\begin{equation}\label{eq:LLN_limit_ODE_A}
\dot z(t)=\mu\bigl(z(t)\bigr),\qquad z(0)=z_0.
\end{equation}
\item[(CLT)] The fluctuation process $\sqrt{\Omega}\,(Z^\Omega-z)$ converges in distribution (in the Skorokhod topology)
to an It\^o diffusion whose drift is given by the linearization of $\mu$ along $z(t)$ and whose instantaneous covariance
is $\Sigma(z(t))$.
\end{enumerate}
\end{proposition}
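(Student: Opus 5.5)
This is the informal Proposition~\ref{prop:lln_clt_informal_A}, so the plan is to reduce it to Kurtz's theorems for density-dependent families, after handling two features: the rates are not globally Lipschitz, and the state space has absorbing axes. I would work directly from the random time-change representation \eqref{eq:density_jump_representation_A} and the decomposition \eqref{eq:density_semimartingale_A}.

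\emph{Step 1 (localization).} Fix $T>0$. By Theorem~\ref{thm:absorbing_set_global_A}, the ODE solution $z(\cdot)$ is global and stays in a compact set $K\subset\overline U$ on $[0,T]$. Let $K_1$ be a closed unit neighbourhood of $K$ intersected with $\overline U$. On $K_1$ each $\lambda_e$ in \eqref{eq:rate_functions_A} is Lipschitz and bounded, since $1+N\ge1$ keeps $\lambda_E$ smooth. Set $\sigma_\Omega:=\inf\{t:Z^\Omega(t)\notin K_1\}$. On $[0,T\wedge\sigma_\Omega]$ the truncated chain coincides with one whose rates are globally Lipschitz and bounded, so Kurtz's theorem applies to the stopped process.

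\emph{Step 2 (LLN).} Subtract the integral equation for $z$ from \eqref{eq:density_semimartingale_A}. This gives
\[
|Z^\Omega(t\wedge\sigma_\Omega)-z(t\wedge\sigma_\Omega)|\le |Z^\Omega(0)-z_0|+L\int_0^{t}|Z^\Omega(s\wedge\sigma_\Omega)-z(s\wedge\sigma_\Omega)|\,ds+\rho\sup_{s\le T}|M^\Omega(s\wedge\sigma_\Omega)|.
\]
Doob's inequality and \eqref{eq:quad_var_MOmega_A} give $\mathbb E\sup_{s\le T}|M^\Omega(s\wedge\sigma_\Omega)|^2\le 4T\sup_{K_1}\operatorname{tr}\Sigma$, so the last term is $O_P(\Omega^{-1/2})$. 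Gronwall's inequality then gives uniform convergence in probability on the stopped interval. Since $z$ stays at distance $1$ inside $K_1$, this forces $\mathbb P(\sigma_\Omega\le T)\to0$, which removes the localization. Absorption at the axes needs no special treatment, because the ODE also leaves the axes invariant (Lemma~\ref{lem:det_positive_invariance_A}).

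\emph{Step 3 (CLT).} Write $V^\Omega:=\sqrt\Omega(Z^\Omega-z)$. Then
\[
V^\Omega(t)=V^\Omega(0)+\int_0^t\sqrt\Omega\bigl(\mu(Z^\Omega)-\mu(z)\bigr)\,ds+M^\Omega(t).
\]
A Taylor expansion with $\mu\in C^2$ on $K_1$ turns the drift integrand into $D\mu(z(s))V^\Omega(s)+O(\Omega^{-1/2}|V^\Omega|^2)$. For $M^\Omega$, the jumps are of size $\Omega^{-1/2}|\Delta_e|\to0$, and the predictable quadratic variation $\int_0^t\Sigma(Z^\Omega)\,ds$ converges to $\int_0^t\Sigma(z)\,ds$ by Step 2 and continuity of $\Sigma$. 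The martingale functional CLT (Ethier--Kurtz, Thm.~7.1.4) therefore gives $M^\Omega\Rightarrow M$, a continuous Gaussian martingale with covariance $\int_0^t\Sigma(z(s))\,ds$.

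Tightness of $V^\Omega$ follows from a Gronwall bound on $\sup_{t\le T}|V^\Omega|$, which is $O_P(1)$ when $V^\Omega(0)$ converges. This bound also kills the quadratic remainder. The linear map taking $(V(0),M)$ to the solution of $V=V(0)+\int D\mu(z)V\,ds+M$ is continuous in the Skorokhod topology, and the limit is the Itô diffusion $dV=D\mu(z)V\,dt+\Sigma(z)^{1/2}dB$.

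\emph{Expected obstacle.} The main obstacle is the CLT hypothesis: it requires $\sqrt\Omega(Z^\Omega(0)-z_0)\to v_0$, which the statement does not make explicit, so I would add it. A second subtlety is initial data on or near the axes, where $\Sigma$ degenerates. This is harmless, because the Gaussian limit only needs continuity of $\Sigma$, not nondegeneracy.
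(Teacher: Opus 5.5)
Your proposal is correct and follows the paper's route. The paper gives no argument of its own and simply defers to Kurtz's density-dependent limit theorems (Ethier--Kurtz, Ch.~11); you carry out the localized Gronwall and martingale-FCLT argument behind those theorems explicitly.

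Your flagged hypothesis is also right. The CLT part genuinely needs $\sqrt{\Omega}\,(Z^\Omega(0)-z_0)\to v_0$, which Kurtz's theorem assumes but the paper's informal statement omits, since only $Z^\Omega(0)\to z_0$ is stated.
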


\begin{remark}[Connection to chemical Langevin and reaction-channel form]\label{rem:cle_connection_A}
The covariance representation \eqref{eq:quad_var_MOmega_A} coincides with the standard reaction-channel covariance
formula used in chemical Langevin approximations and stochastic reaction network theory
\citep{gillespie_exact_1977,gillespie_chemical_2000,kampen_stochastic_2007,anderson_continuous_2011}.
In particular, coupled events that simultaneously change multiple coordinates (here, predation--conversion) induce
off-diagonal covariance terms in the diffusion limit.
\end{remark}

\section{Limiting drift and covariance; mechanistic negative correlation}
\label{sec:mu_sigma_factorization}

In this section we compute the limiting drift and instantaneous covariance implied by the mechanistic event model
of Section~\ref{sec:mech_ctmc} under the density-dependent scaling of Section~\ref{sec:scaling_diffusion}.
The off-diagonal entry $\Sigma_{12}<0$ will emerge as a direct mechanistic fingerprint of the coupled
predation--conversion channel: a single event decreases prey while (with conversion) increasing predators.

Explicit drift and diffusion matrix. Let $\lambda_e$ be the density-level event rates in \eqref{eq:rate_functions_A} and let $\Delta_e\in\mathbb{Z}^2$ denote
the event increments from Section~\ref{subsec:ctmc_specification_A}. Recall the definitions
\begin{equation}\label{eq:mu_Sigma_definitions_A}
\mu(z)=\sum_{e\in\mathcal E}\Delta_e\,\lambda_e(z),
\qquad
\Sigma(z)=\sum_{e\in\mathcal E}\lambda_e(z)\,\Delta_e\Delta_e^\top,
\qquad z=(N,P)\in\mathbb{R}_+^2.
\end{equation}
With the conventions
\[
\Delta_B=\binom{1}{0},\qquad
\Delta_C=\binom{-1}{0},\qquad
\Delta_D=\binom{0}{-1},\qquad
\Delta_E=\binom{-1}{1},
\]
and with the rates \eqref{eq:rate_functions_A}, we obtain the explicit drift
\begin{equation}\label{eq:mu_explicit}
\mu(N,P)=
\begin{pmatrix}
N-\dfrac{N^2}{k}-\dfrac{mNP}{1+N} \\[6pt]
\dfrac{mNP}{1+N}-cP
\end{pmatrix}
\end{equation}
which coincides with the Rosenzweig--MacArthur vector field in \eqref{eq:RM_ODE}. Moreover, the diffusion
(instantaneous covariance) matrix is
\begin{equation}\label{eq:Sigma_explicit}
\Sigma(N,P)=
\begin{pmatrix}
N+\dfrac{N^2}{k}+\dfrac{mNP}{1+N} & -\dfrac{mNP}{1+N}\\[6pt]
-\dfrac{mNP}{1+N} & cP+\dfrac{mNP}{1+N}
\end{pmatrix}.
\end{equation}
In particular, for $(N,P)\in U=(0,\infty)^2$ one has the strict mechanistic negative correlation
\begin{equation}\label{eq:Sigma12_negative_A}
\Sigma_{12}(N,P)=-\frac{mNP}{1+N}<0.
\end{equation}

Event-based Brownian factorization, it is often convenient to encode \eqref{eq:Sigma_explicit} by an ``event matrix'' that preserves the
mechanistic decomposition by reaction channels. Define
\begin{equation}\label{eq:Levent_def_A}
L_{\rm ev}(N,P)
:=
\Bigl[\,
\sqrt{\lambda_B(N,P)}\,\Delta_B,\;
\sqrt{\lambda_C(N,P)}\,\Delta_C,\;
\sqrt{\lambda_D(N,P)}\,\Delta_D,\;
\sqrt{\lambda_E(N,P)}\,\Delta_E
\Bigr]\in\mathbb{R}^{2\times 4},
\end{equation}
where $\lambda_e$ are as in \eqref{eq:rate_functions_A}. Writing columns explicitly,
\begin{equation}\label{eq:Levent_explicit_A}
L_{\rm ev}(N,P)=
\begin{pmatrix}
\sqrt{N} & -\sqrt{\dfrac{N^2}{k}} & 0 & -\sqrt{\dfrac{mNP}{1+N}}\\[8pt]
0 & 0 & -\sqrt{cP} & \sqrt{\dfrac{mNP}{1+N}}
\end{pmatrix}.
\end{equation}

\begin{lemma}[Event factorization]\label{lem:event_factorization_A}
For all $(N,P)\in\mathbb{R}_+^2$,
\begin{equation}\label{eq:event_factorization_identity_A}
\Sigma(N,P)=L_{\rm ev}(N,P)\,L_{\rm ev}(N,P)^\top .
\end{equation}
Consequently, if $W(t)=(W_B(t),W_C(t),W_D(t),W_E(t))^\top$ is a standard $\mathbb{R}^4$ Brownian motion, then the
diffusion approximation associated with \eqref{eq:mu_explicit}--\eqref{eq:Sigma_explicit} can be written in
reaction-channel (event) form as
\begin{equation}\label{eq:SDE_event_form}
dZ(t)=\mu(Z(t))\,dt+\rho\,L_{\rm ev}(Z(t))\,dW(t),
\qquad \rho=\Omega^{-1/2},
\end{equation}
whenever the path remains in $U=(0,\infty)^2$.
\end{lemma}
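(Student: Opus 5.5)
The identity \eqref{eq:event_factorization_identity_A} is a direct computation. I will write the product column by column. For any matrix $L=[\ell_1,\dots,\ell_4]$ with columns $\ell_e\in\mathbb R^2$,
\[
LL^\top=\sum_{e}\ell_e\ell_e^\top .
\]
With $\ell_e=\sqrt{\lambda_e(N,P)}\,\Delta_e$ as in \eqref{eq:Levent_def_A}, each summand equals
\[
\bigl(\sqrt{\lambda_e}\bigr)^2\Delta_e\Delta_e^\top=\lambda_e\,\Delta_e\Delta_e^\top .
\]
This uses that $\lambda_e(N,P)\ge 0$ on $\mathbb R_+^2$, so the square roots are real and $(\sqrt{\lambda_e})^2=\lambda_e$. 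Summing over $e\in\mathcal E$ gives exactly the definition of $\Sigma$ in \eqref{eq:mu_Sigma_definitions_A}.

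As a consistency check against the explicit form \eqref{eq:Sigma_explicit}, I will list the four outer products. They are $\Delta_B\Delta_B^\top=\Delta_C\Delta_C^\top=\mathrm{diag}(1,0)$ and $\Delta_D\Delta_D^\top=\mathrm{diag}(0,1)$. The coupled channel gives
\[
\Delta_E\Delta_E^\top=\begin{pmatrix}1&-1\\-1&1\end{pmatrix}.
\]
Weighting these by $N$, $N^2/k$, $cP$ and $mNP/(1+N)$ respectively reproduces every entry of \eqref{eq:Sigma_explicit}. In particular, the off-diagonal entry comes only from $E$ and equals $-mNP/(1+N)$.

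For the ``consequently'' part, I will compute the quadratic covariation of the martingale term in \eqref{eq:SDE_event_form}. For $W$ a standard $\mathbb R^4$ Brownian motion and $Z$ a solution remaining in $U$, the martingale part is $\rho\int L_{\rm ev}(Z)\,dW$, whose covariation is
\[
d\langle Z\rangle_t=\rho^2 L_{\rm ev}L_{\rm ev}^\top(Z_t)\,dt=\rho^2\Sigma(Z_t)\,dt .
\]
This matches the diffusion approximation with drift $\mu$ and instantaneous covariance $\rho^2\Sigma$ read off from \eqref{eq:density_semimartingale_A}--\eqref{eq:quad_var_MOmega_A}. The generator of the process depends only on $\mu$ and $LL^\top$, so the event form is a legitimate representation of that diffusion.

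The only point needing care is the restriction ``whenever the path remains in $U$.'' On $U$ the coefficients of $L_{\rm ev}$ are smooth, since all $\lambda_e>0$ there. At the axes $\sqrt{N}$ and $\sqrt{cP}$ fail to be Lipschitz, and this is why the SDE is interpreted only up to the boundary hitting time. The factorization identity itself holds on all of $\mathbb R_+^2$, so no real obstacle is expected; the lemma is essentially algebraic. Well-posedness is deferred to Theorem~\ref{thm:intro_wellposed_nonexplosion}.
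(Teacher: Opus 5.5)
Your proposal is correct and follows the same route as the paper: you expand $L_{\rm ev}L_{\rm ev}^\top$ column by column as $\sum_{e}\bigl(\sqrt{\lambda_e}\,\Delta_e\bigr)\bigl(\sqrt{\lambda_e}\,\Delta_e\bigr)^\top=\sum_e\lambda_e\Delta_e\Delta_e^\top=\Sigma$, and then use the standard fact that the diffusion depends only on $\mu$ and $LL^\top$. Your entrywise check against \eqref{eq:Sigma_explicit}, the remark that $\lambda_e\ge 0$ keeps the square roots real, and the explicit quadratic-covariation computation add detail the paper leaves implicit.
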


\begin{proof}
By definition \eqref{eq:Levent_def_A},
\begin{align*}
L_{\rm ev}(N,P)L_{\rm ev}(N,P)^\top
=&\sum_{e\in\mathcal E}
\bigl(\sqrt{\lambda_e(N,P)}\,\Delta_e\bigr)\bigl(\sqrt{\lambda_e(N,P)}\,\Delta_e\bigr)^\top \\[4pt]
=&\sum_{e\in \mathcal E}\lambda_e(N,P)\,\Delta_e\Delta_e^\top
=\Sigma(N,P),
\end{align*}
which is \eqref{eq:event_factorization_identity_A}. The SDE representation \eqref{eq:SDE_event_form} follows from the
standard identification of a diffusion with covariance $\Sigma$ via any factor $L$ such that $\Sigma=LL^\top$.
\end{proof}

The factor $L_{\rm ev}$ preserves the mechanistic decomposition by event channels and is therefore natural for both
analysis and simulation. Since there are four reaction channels, $L_{\rm ev}$ is $2\times4$ and the driving Brownian
motion in \eqref{eq:SDE_event_form} is $4$-dimensional. In Section~\ref{subsec:cholesky_factorization_A} we record a
two-dimensional (Cholesky) factorization on $U$ that is sometimes preferable for computation.

Positive definiteness on the interior and degeneracy on the boundary. The diffusion matrix \eqref{eq:Sigma_explicit} is uniformly nondegenerate on compact subsets of the interior $U$,
while it degenerates on the axes where one or more event rates vanish.

\begin{lemma}[Positive definiteness on the interior; boundary degeneracy]\label{lem:Sigma_posdef_A}
For all $(N,P)\in(0,\infty)^2$, the matrix $\Sigma(N,P)$ in \eqref{eq:Sigma_explicit} is symmetric and positive
definite. On the boundary $\partial U=\{N=0\}\cup\{P=0\}$ it is positive semidefinite and may be singular (degenerate).
\end{lemma}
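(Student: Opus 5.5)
Symmetry is immediate from the explicit form \eqref{eq:Sigma_explicit}, or from $\Sigma=L_{\rm ev}L_{\rm ev}^\top$ in Lemma~\ref{lem:event_factorization_A}. The same factorization gives positive semidefiniteness on all of $\mathbb R_+^2$ at once: for $v\in\mathbb R^2$,
\[
v^\top\Sigma v=|L_{\rm ev}^\top v|^2=\sum_{e\in\mathcal E}\lambda_e(N,P)\,(\Delta_e^\top v)^2\ge 0 .
\]
This already settles the boundary claim except for exhibiting singularity.

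For strict positivity on $U$, I will use the sum-of-squares expression above. If $v^\top\Sigma v=0$ with $N,P>0$, then every term vanishes. Because $\lambda_B=N>0$, we get $\Delta_B^\top v=v_1=0$. Because $\lambda_D=cP>0$, we get $\Delta_D^\top v=-v_2=0$. Hence $v=0$, so $\Sigma$ is positive definite.

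As a cross-check, and to record a quantitative form, I will also compute the $2\times2$ determinant. Write $a=N+N^2/k$, $b=cP$, and $e=\lambda_E=mNP/(1+N)$. Then
\[
\det\Sigma=(a+e)(b+e)-e^2=ab+e(a+b)>0
\]
on $U$. Together with $\Sigma_{11}>0$, Sylvester's criterion gives positive definiteness again. This form also shows nondegeneracy uniformly on compact subsets of $U$, by continuity of the smallest eigenvalue.

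For possible singularity on $\partial U$, take a point on the axis $P=0$ with $N\ge0$. There $e=b=0$, so $\Sigma=\mathrm{diag}(a,0)$, which is singular. On $N=0$ we have $a=e=0$, so $\Sigma=\mathrm{diag}(0,cP)$, also singular. At the origin $\Sigma=0$.

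No step is a real obstacle. The only care needed is that the positive-definiteness argument rests on $\lambda_B$ and $\lambda_D$, not on the coupling channel $E$.
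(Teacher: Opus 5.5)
Your proof is correct. Your main argument takes a different route from the paper's, though your cross-check reproduces the paper's argument.

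The paper works entirely with the explicit $2\times2$ entries. It checks $\Sigma_{11},\Sigma_{22}>0$ and computes $\det\Sigma=(N+N^2/k)\bigl(cP+\tfrac{mNP}{1+N}\bigr)+\tfrac{mNP}{1+N}\,cP>0$, which is exactly your $ab+e(a+b)$. It then concludes from positive trace and positive determinant. Semidefiniteness and singularity on the axes are read off from the diagonal forms $\Sigma(0,P)$ and $\Sigma(N,0)$.

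Your primary argument instead uses the event factorization $v^\top\Sigma v=\sum_e\lambda_e(\Delta_e^\top v)^2$. This gives semidefiniteness on all of $\overline U$ in one line. It also reduces definiteness to the fact that the increments of the channels with positive rate span $\mathbb R^2$; here $\Delta_B$ and $\Delta_D$ alone suffice. What this buys you:
\begin{itemize}
\item It is dimension-free and transfers verbatim to any reaction network: $\Sigma(z)$ is definite if and only if $\{\Delta_e:\lambda_e(z)>0\}$ spans the state space.
\item It makes transparent that the coupling channel $E$ plays no role in nondegeneracy.
\end{itemize}
The paper's determinant computation, by contrast, is tied to the planar case. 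Its advantage is an explicit quantitative expression, e.g.\ $\lambda_{\min}\ge\det\Sigma/\operatorname{tr}\Sigma$, which is what one would use to produce explicit ellipticity constants on compact subsets of $U$.
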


\begin{proof}
Symmetry is immediate from \eqref{eq:Sigma_explicit}. For $N>0$ and $P>0$ we have
\[
\Sigma_{11}(N,P)=N+\frac{N^2}{k}+\frac{mNP}{1+N}>0,
\qquad
\Sigma_{22}(N,P)=cP+\frac{mNP}{1+N}>0.
\]
Moreover,
\begin{align*}
\det \Sigma(N,P)
&=
\Bigl(N+\frac{N^2}{k}+\frac{mNP}{1+N}\Bigr)\Bigl(cP+\frac{mNP}{1+N}\Bigr)
-\Bigl(\frac{mNP}{1+N}\Bigr)^2\\[4pt]
&=
\Bigl(N+\frac{N^2}{k}\Bigr)\Bigl(cP+\frac{mNP}{1+N}\Bigr)
+\frac{mNP}{1+N}\,cP
>0.
\end{align*}
Thus $\Sigma(N,P)$ has positive trace and positive determinant, hence is positive definite on $(0,\infty)^2$.

If $N=0$ or $P=0$, then one or more rates in \eqref{eq:rate_functions_A} vanish and \eqref{eq:Sigma_explicit} shows
that $\Sigma$ is positive semidefinite but generally singular. For example,
\[
\Sigma(0,P)=\begin{pmatrix}0&0\\0&cP\end{pmatrix},
\qquad
\Sigma(N,0)=\begin{pmatrix}N+\frac{N^2}{k}&0\\0&0\end{pmatrix},
\]
so degeneracy occurs on each axis.
\end{proof}

By continuity of $\Sigma$ and Lemma~\ref{lem:Sigma_posdef_A}, for every compact $K\Subset U$ there exists
$\lambda_K>0$ such that $\xi^\top\Sigma(z)\xi\ge\lambda_K|\xi|^2$ for all $z\in K$ and $\xi\in\mathbb{R}^2$.

\subsection*{Cholesky factorization (two-dimensional Brownian driver)}
\label{subsec:cholesky_factorization_A}

For analysis and simulation it is sometimes preferable to drive the diffusion by a two-dimensional Brownian motion.
On the interior $U=(0,\infty)^2$, this can be achieved by any matrix square root of $\Sigma$, for instance the unique
Cholesky factor with positive diagonal entries.

By Lemma~\ref{lem:Sigma_posdef_A}, for $(N,P)\in U$ there exists a unique lower-triangular matrix
$L_{\rm chol}(N,P)\in\mathbb{R}^{2\times 2}$ with positive diagonal entries such that
\begin{equation}\label{eq:Cholesky_identity_A}
\Sigma(N,P)=L_{\rm chol}(N,P)\,L_{\rm chol}(N,P)^\top .
\end{equation}
An explicit choice is
\begin{equation}\label{eq:Cholesky_explicit}
L_{\rm chol}(N,P)=
\begin{pmatrix}
\sqrt{\Sigma_{11}(N,P)} & 0\\[0.8ex]
\dfrac{\Sigma_{21}(N,P)}{\sqrt{\Sigma_{11}(N,P)}} &
\sqrt{\Sigma_{22}(N,P)-\dfrac{\Sigma_{21}(N,P)^2}{\Sigma_{11}(N,P)}}
\end{pmatrix},
\end{equation}
where $\Sigma_{ij}$ are the entries in \eqref{eq:Sigma_explicit}. In particular, the second diagonal entry is
well-defined and strictly positive on $U$ because $\Sigma$ is positive definite there.

Hence, if $B(t)$ is a standard $\mathbb{R}^2$ Brownian motion, the same diffusion approximation can be written as
\begin{equation}\label{eq:SDE_cholesky_form_A}
dZ(t)=\mu(Z(t))\,dt+\rho\,L_{\rm chol}(Z(t))\,dB(t),
\qquad \rho=\Omega^{-1/2},
\end{equation}
whenever the path remains in $U$.

Equations \eqref{eq:SDE_event_form} and \eqref{eq:SDE_cholesky_form_A} define the same diffusion on $U$ since both
factors satisfy $LL^\top=\Sigma$ on the interior. The event factorization preserves reaction-channel structure and is
natural from the mechanistic standpoint, while the Cholesky factorization reduces the driving noise dimension and is
often convenient for computation. In Section~\ref{sec:numerics} we include finite-sample diagnostics illustrating the
consistency of these two implementations under an absorbed discretization scheme.

\begin{remark}[Mechanistic fingerprint: negative demographic noise correlation]\label{rem:mechanistic_fingerprint_full}
The strict negativity of the cross-covariance
\[
\Sigma_{12}(N,P) = - \frac{mNP}{1+N} < 0 \qquad \text{on } U
\]
is a direct structural consequence of the underlying event setting. 
It serves as a clear fingerprint distinguishing this diffusion from ad hoc diagonal-noise models.
Specifically, the predation-conversion channel $E$ has increment $\Delta_E= (-1,1)^\top$, reflecting the simultaneous loss of one prey and gain of one predator. In the density-dependent diffusion limit, each channel contributes $\lambda_e\Delta_e\Delta_e^\top$ to the instantaneous covariance. For E,
\[
\lambda_E(N,P)\Delta_E\Delta_E^\top = \frac{mNP}{1+N}\begin{pmatrix}
    1 & -1 \\[4pt]
    -1 & 1
\end{pmatrix},
\]
whose off-diagonal entries are strictly negative whenever both species are present. By contrast, diagonal-noise SDEs enforce $\Sigma_{12}\equiv 0$ and therefore cannot capture this event-level coupling. Thus, the sign and magnitude of $\Sigma_{12}$ encode a mechanistic constraint rather than an imposed correlation structure.
\end{remark}

\section{Full-covariance diffusion with absorption: definition and basic properties}
\label{sec:sde_absorption_wellposedness}

We now define the diffusion approximation on the interior $U=(0,\infty)^2$ using the mechanistic drift $\mu$ and
covariance $\Sigma$ from Section~\ref{sec:mu_sigma_factorization}, and we impose absorption at the axes by freezing
the process upon first hitting $\partial U=\{N=0\}\cup\{P=0\}$. This absorbed convention mirrors the
irreversibility of demographic extinction in the underlying CTMC.


Throughout this section we work on a filtered probability space
$(\Omega,\mathcal{F},(\mathcal{F}_t)_{t\ge0},\mathbb{P})$ satisfying the usual conditions.
Let $U:=(0,\infty)^2$ and $\overline U:=[0,\infty)^2$. We fix parameters $k>0$, $m>0$, $c>0$, and a noise scale
$\rho>0$.

Let $\mu:\overline U\to\mathbb{R}^2$ and $\Sigma:\overline U\to\mathbb{R}^{2\times 2}$ be given by
\eqref{eq:mu_explicit}--\eqref{eq:Sigma_explicit}.
On $U$ we use the event-based factor $L_{\rm ev}:\overline U\to\mathbb{R}^{2\times 4}$ defined in
\eqref{eq:Levent_explicit_A}, so that $\Sigma(z)=L_{\rm ev}(z)L_{\rm ev}(z)^\top$ for all $z\in\overline U$
(Lemma~\ref{lem:event_factorization_A}).

\begin{definition}[Interior SDE and absorption time]\label{def:interior_SDE_tau}
Let $z_0=(N_0,P_0)\in U$ and let $W=(W_1,\dots,W_4)^\top$ be a standard $\mathbb{R}^4$ Brownian motion.
An interior strong solution starting from $z_0$ is an $U$-valued, continuous, adapted process
$Z(t)=(N(t),P(t))$ defined up to a (possibly infinite) stopping time $\tau$ such that
\begin{equation}\label{eq:interior_SDE}
Z(t)=z_0+\int_0^{t\wedge\tau}\mu\bigl(Z(s)\bigr)\,ds
+\rho\int_0^{t\wedge\tau} L_{\rm ev}\bigl(Z(s)\bigr)\,dW(s),
\qquad t\ge0,
\end{equation}
and such that
\begin{equation}\label{eq:tau_def_A}
\tau:=\inf\{t>0:\, Z(t)\notin U\}
=\inf\{t>0:\, N(t)=0 \ \text{or}\ P(t)=0\}.
\end{equation}
\end{definition}

\begin{definition}[Absorbed extension]\label{def:absorbed_process}
Given an interior strong solution $(Z,\tau)$ as in Definition~\ref{def:interior_SDE_tau},
the absorbed process $\widehat Z$ is defined for all $t\ge0$ by
\begin{equation}\label{eq:absorbed_extension}
\widehat Z(t):=
\begin{cases}
Z(t), & t<\tau,\\
Z(\tau), & t\ge \tau.
\end{cases}
\end{equation}
We refer to $\tau$ as the absorption time.
\end{definition}

The freezing rule \eqref{eq:absorbed_extension} encodes demographic extinction: once a coordinate reaches $0$, the
corresponding species is considered extinct and the process remains at the boundary thereafter (Figure~\ref{fig:absorption_rule}). We illustrate one set of representative time evolution of the absorbed process in Figure~\ref{fig:absorbed_timeseries}. This convention is
consistent with the CTMC model (Section~\ref{subsec:ctmc_absorbing_axes}), where the axes are absorbing because
birth or predation events are impossible in the absence of individuals.

\begin{figure}[htbp]
    \centering
    \includegraphics[width=0.75\linewidth]{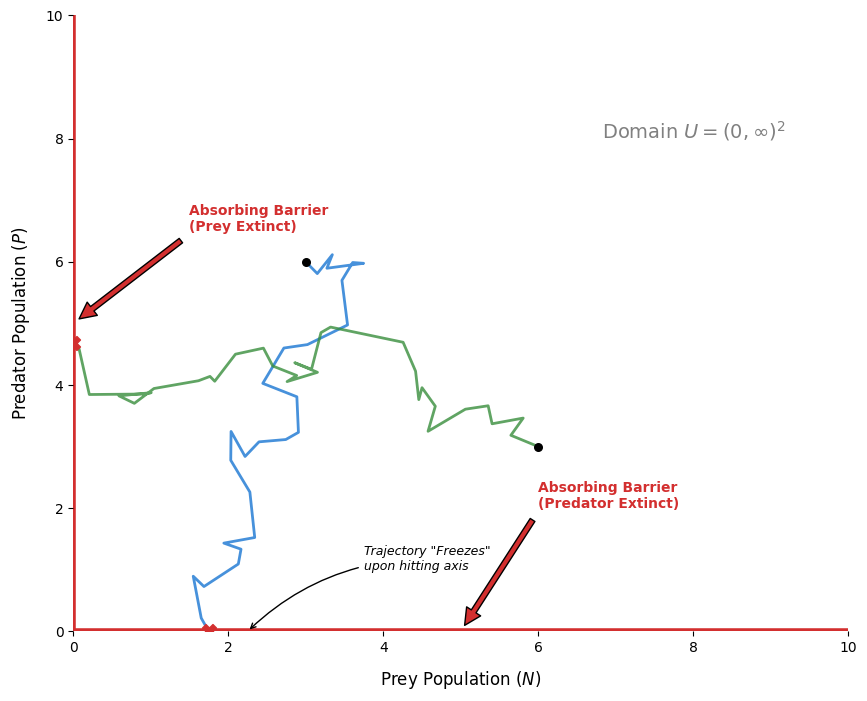}
    \caption{\textbf{Absorption rule.} The model treats extinction as irreversible. Once a trajectory hits either axis ($N=0$ or $P=0$), the diffusion is frozen (marked by 'X'), representing
        the demographic end of a species.}
    \label{fig:absorption_rule}
\end{figure}

\begin{figure}
    \centering
    \includegraphics[width=0.9\linewidth]{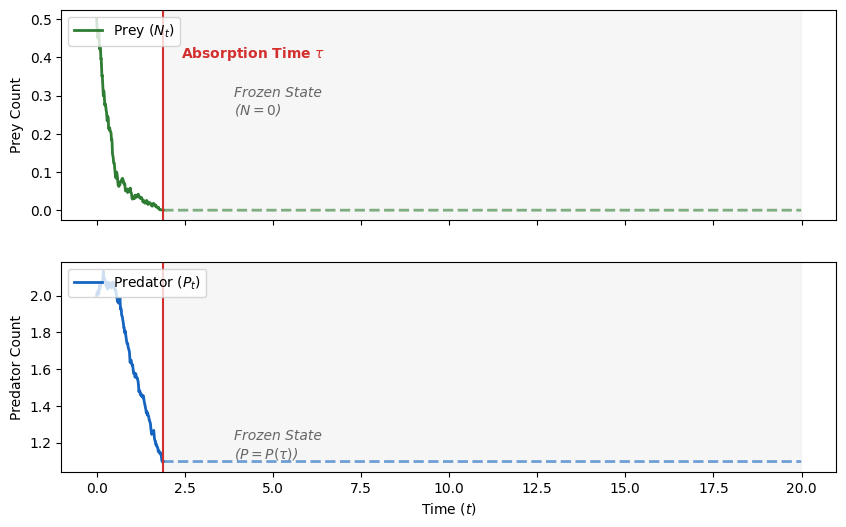}
\caption{\textbf{Time evolution of the absorbed process $\widehat Z(t)$.}
Simulation of a single stochastic trajectory illustrates the freezing convention in Definition~\ref{def:absorbed_process}. The vertical red line marks the absorption time $\tau$ when the prey population ($N_t$, green) hits zero. For all times $t \ge \tau$ (gray shaded region), the entire state vector is frozen at $Z(\tau)$. Note that the predator count ($P_t$, blue) does not decay to zero but remains constant at its value at the moment of prey extinction ($P(\tau)$). This definition formalizes the irreversibility of extinction for the stopped SDE analysis.}
\label{fig:absorbed_timeseries}
\end{figure}

Strong existence and pathwise uniqueness up to absorption. This subsection proves that the interior SDE \eqref{eq:interior_SDE} is strongly well-posed up to the absorption time
$\tau$ and that the absorbed extension \eqref{eq:absorbed_extension} is well-defined. The argument is a standard
localization construction based on truncated globally Lipschitz coefficients; see, e.g.,
\citep{karatzas_brownian_1998,ikeda_stochastic_1981,oksendal_stochastic_2003}.

We now present a fully explicit truncation map and give a detailed localization proof that will be convenient
to cite later.
For the SDE \eqref{eq:SDE_event_form}, the drift vector and the volatility matrix are locally Lipschitz continuous on $U$. Local Lipschitz continuity ensures local well-posedness.
\begin{theorem}[Strong well-posedness up to absorption]\citep{karatzas_brownian_1998}\label{thm:strong_wellposedness_tau}
Fix $z_0\in U$ and let $W$ be a standard $\mathbb{R}^4$ Brownian motion.
Consider the interior SDE
\begin{equation}\label{eq:SDE_tau_recall}
dZ(t)=\mu(Z(t))\,dt+\rho\,L_{\rm ev}(Z(t))\,dW(t),\qquad Z(0)=z_0,
\end{equation}
with absorption time $\tau=\inf\{t>0:\,Z(t)\notin U\}$.
Then there exists an $(\mathcal{F}_t)$-adapted continuous process $Z$ defined on $[0,\tau)$ such that
\eqref{eq:SDE_tau_recall} holds on $[0,\tau)$, and $Z$ is a strong solution in the sense that for each $t<\tau$,
$Z(t)$ is measurable with respect to $\sigma(z_0,W(s):0\le s\le t)$.
Moreover, pathwise uniqueness holds on $[0,\tau)$: if $Z^{(1)}$ and $Z^{(2)}$ are two solutions driven by the
same Brownian motion $W$ with the same initial condition $z_0$, then
\[
\mathbb{P}\bigl(Z^{(1)}(t)=Z^{(2)}(t)\ \text{for all } t<\tau^{(1)}\wedge \tau^{(2)}\bigr)=1.
\]
In particular, the absorbed extension $\widehat Z$ defined by \eqref{eq:absorbed_extension} is well-defined and
unique in law.
\end{theorem}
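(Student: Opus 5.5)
We will prove Theorem~\ref{thm:strong_wellposedness_tau} by the standard localization construction with truncated, globally Lipschitz coefficients. For $n\ge1$ with $1/n<N_0,P_0<n$, set
\[
K_n:=\{(N,P):\ 1/n\le N\le n,\ 1/n\le P\le n\}\Subset U .
\]
On $K_n$ the rates $\lambda_e$ are $C^\infty$ and strictly positive: $N$, $N^2/k$, $cP$ and $mNP/(1+N)$ are all bounded below by a positive constant. Hence $\sqrt{\lambda_e}$ is $C^\infty$ on a neighborhood of $K_n$, so $\mu$ and $L_{\rm ev}$ are Lipschitz on $K_n$. This is the only place the square roots matter, and it is why we stay strictly inside $U$. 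We then extend them to globally Lipschitz, bounded maps $\mu_n,L_n$ on $\mathbb R^2$ that agree with $\mu,L_{\rm ev}$ on $K_n$. An explicit choice is to compose with the coordinatewise projection $\pi_n(N,P)=(\min\{\max\{N,1/n\},n\},\min\{\max\{P,1/n\},n\})$, which is $1$-Lipschitz and maps $\mathbb R^2$ onto $K_n$.

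For each $n$, the classical It\^o existence and uniqueness theorem for globally Lipschitz coefficients (e.g.\ \citep{karatzas_brownian_1998}) gives a unique continuous strong solution $Z^n$ of $dZ^n=\mu_n(Z^n)dt+\rho L_n(Z^n)dW$, $Z^n(0)=z_0$, on $[0,\infty)$. Each $Z^n(t)$ is measurable with respect to $\sigma(z_0,W(s):s\le t)$, since Picard iterates are adapted to the Brownian filtration. Let $\tau_n:=\inf\{t:\ Z^n(t)\notin \operatorname{int}K_n\}$.

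The key consistency step is as follows. For $n<n'$ the coefficients of the two truncated equations coincide on $K_n$. Pathwise uniqueness for the $n'$-equation, applied to the process stopped at $\tau_n\wedge\tau_{n'}$ together with a Gronwall estimate on $\mathbb E|Z^n(t\wedge\sigma)-Z^{n'}(t\wedge\sigma)|^2$, gives $Z^n=Z^{n'}$ on $[0,\tau_n]$ a.s. It also gives $\tau_n\le\tau_{n'}$. We therefore define $Z(t):=Z^n(t)$ for $t\le\tau_n$ and $\zeta:=\lim_n\tau_n$. This $Z$ is continuous and adapted on $[0,\zeta)$, solves \eqref{eq:SDE_tau_recall} there, and inherits the strong-measurability property.

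It remains to identify $\zeta$ with $\tau$, that is, to show that leaving every $K_n$ means hitting $\partial U$ rather than going to infinity. This non-explosion step is the main obstacle; it is the content of part (iii) of Theorem~\ref{thm:intro_wellposed_nonexplosion}. At this stage we can either state the result with $\tau$ understood as $\zeta\wedge\inf\{t:Z\notin U\}$, or we can show $\sup_{t<\zeta}|Z(t)|<\infty$ on $\{\zeta<\infty\}$. For the latter, take $V(N,P)=1+N+P$ (or $(1+N+P)^2$) with generator $\mathcal L V=\mu\cdot\nabla V+\frac{\rho^2}{2}\operatorname{tr}(\Sigma\nabla^2V)$. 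Because $\mu_1+\mu_2=N-N^2/k-cP$ and $\Sigma$ grows at most quadratically with the $-N^2/k$ drift dominating, we expect $\mathcal L V\le C V$. Then $\mathbb E V(Z(t\wedge\sigma_R))\le V(z_0)e^{Ct}$ for $\sigma_R$ the exit time from $\{|z|<R\}$, and letting $R\to\infty$ rules out explosion. Consequently the exits from $K_n$ occur through the strips near the axes, and $\zeta=\tau$.

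Pathwise uniqueness on $[0,\tau^{(1)}\wedge\tau^{(2)})$ follows by the same localization. Both $Z^{(1)}$ and $Z^{(2)}$, stopped at their first exit from $K_n$, solve the $n$-th truncated equation, so they agree up to that time by global-Lipschitz uniqueness. Letting $n\to\infty$ exhausts $[0,\tau^{(1)}\wedge\tau^{(2)})$. Finally, by continuity, $Z(\tau):=\lim_{t\uparrow\tau}Z(t)$ exists on $\{\tau<\infty\}$ once boundedness is known, so $\widehat Z$ is well defined. It is a measurable functional of $(z_0,W)$ by the Yamada--Watanabe principle, and hence is unique in law.
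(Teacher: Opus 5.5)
Your proposal is correct and takes the same route as the paper: localization via truncation to globally Lipschitz coefficients, which the paper only invokes by citing Karatzas--Shreve, deferring non-explosion to Theorem~\ref{thm:nonexplosion_second_moment} and using $|z|^q$ there, whereas your $V=1+N+P$ is simpler because $\nabla^2V=0$ gives $\mathcal LV=N-N^2/k-cP\le V$ exactly. The one step to tighten is the existence of $Z(\tau-)$, since path boundedness alone does not give a limit; but $\mu$ and $L_{\rm ev}$ are continuous on $\overline U$, so on $\{\tau<\infty\}$ the drift integral and $\int_0^{\tau}|L_{\rm ev}(Z(s))|^2\,ds$ are finite, the martingale part converges, and $Z(\tau-)\in\partial U$, which is what makes your identification $\zeta=\tau$ precise.
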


Non-explosion and moment bounds, we record a quadratic Lyapunov estimate for the interior SDE \eqref{eq:SDE_tau_recall}, which yields non-explosion prior to absorption and uniform polynomial-moment bounds on finite time horizons.
Throughout, write $Z=(N,P)$ and denote by $\Sigma(z):=L_{\rm ev}(z)L_{\rm ev}(z)^{\top}$ the covariance matrix on $U$,
cf.\ Section~\ref{sec:mu_sigma_factorization}. Let $\mathcal{L}$ be the (interior) generator
\begin{equation}\label{eq:generator_A}
(\mathcal{L}f)(z)=\langle \mu(z),\nabla f(z)\rangle+\frac{\rho^2}{2}\,\mathrm{Tr}\bigl(\Sigma(z)\nabla^2 f(z)\bigr),
\qquad z\in U,
\end{equation}
for $f\in C^2(U)$.

\begin{theorem}[Non-explosion and polynomial moment bound]\label{thm:nonexplosion_second_moment}
Let $z_0\in U$ and let $Z$ be the unique strong solution on $[0,\tau)$ from
Theorem~\ref{thm:strong_wellposedness_tau}. Define the radial exit times
\[
\sigma_R:=\inf\{t\ge 0:\ |Z(t)|\ge R\},\qquad R\ge 1,
\]
and the associated explosion time $\zeta:=\lim_{R\to\infty}\sigma_R\in(0,\infty]$.
Then:
\begin{enumerate}[label=(\roman*)]
\item (Non-explosion) One has $\mathbb{P}(\zeta=\infty)=1$. In particular, the maximal lifetime
of the interior SDE is $\tau$, i.e., there is no blow-up prior to hitting $\partial U$.
\item (Polynomial moment bound) 
Assume either $p=2$ or $p\ge 4$. Fix $T>0$. There exists a constant $C_{p,T}<\infty$ depending only on $(k,m,c,\rho, p)$ and $T$ such that for every initial condition $z_0\in U$,
\begin{equation}\label{eq:higher_moment_bound_appD}
\sup_{0\le t\le T}\ \mathbb{E}\bigl[\,|Z(t\wedge\tau)|^p\,\bigr]\ \le\ C_{p,T}\bigl(1+|z_0|^p\bigr).
\end{equation}
\end{enumerate}
\end{theorem}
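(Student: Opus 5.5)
We prove both parts with the Lyapunov function $V_p(z):=(1+N+P)^p$ and the generator $\mathcal L$ from \eqref{eq:generator_A}, localized at $\tau_R:=\tau\wedge\sigma_R$. On $\overline U$ the quantities $1+|z|$ and $1+N+P$ are comparable, since $|z|\le N+P\le\sqrt2|z|$. So moment bounds for $V_p$ transfer to $|Z|^p$ up to constants depending only on $p$. Write $S:=N+P$.

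\textbf{Step 1: the generator bound.} The drift of $S$ is
\[
\mu_1+\mu_2=N-\frac{N^2}{k}-cP\le N\le S,
\]
because the predation terms cancel. The relevant second-order quantity is
\[
\mathbf 1^\top\Sigma\mathbf 1=\Sigma_{11}+2\Sigma_{12}+\Sigma_{22}=N+\frac{N^2}{k}+cP.
\]
Here the cross-covariance $\Sigma_{12}<0$ exactly cancels the $\lambda_E$ contributions, so the predation channel does not move $S$ at all. For $f=V_p$ we have $\nabla f=p(1+S)^{p-1}\mathbf 1$ and $\nabla^2 f=p(p-1)(1+S)^{p-2}\mathbf 1\mathbf 1^\top$. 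Hence
\[
\mathcal L V_p\le p(1+S)^{p-1}S+\frac{\rho^2}{2}p(p-1)(1+S)^{p-2}\Bigl(N+\frac{N^2}{k}+cP\Bigr).
\]
Since $N+\frac{N^2}{k}+cP\le C(1+S)^2$, this gives $\mathcal L V_p\le K_p V_p$ on $U$. The constant $K_p$ depends only on $(k,m,c,\rho,p)$, and in fact $m$ drops out.

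\textbf{Step 2: localization.} On $[0,\tau_R]$ the coefficients are bounded, so Itô's formula applies to $V_p(Z(t\wedge\tau_R))$ and the stochastic integral is a true martingale. Taking expectations,
\[
\mathbb E V_p(Z(t\wedge\tau_R))\le V_p(z_0)+K_p\int_0^t\mathbb E V_p(Z(s\wedge\tau_R))\,ds .
\]
Gronwall's inequality then yields $\mathbb E V_p(Z(t\wedge\tau_R))\le e^{K_pT}V_p(z_0)$, uniformly in $R$ and in $t\le T$.

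\textbf{Step 3: non-explosion.} On the event $\{\sigma_R\le T\wedge\tau\}$ we have $V_p(Z(\tau_R))\ge(1+R)^p$. Therefore
\[
\mathbb P(\sigma_R\le T\wedge\tau)\le\frac{e^{K_pT}V_p(z_0)}{(1+R)^p}\to0 \quad\text{as } R\to\infty .
\]
It follows that $\zeta\wedge T>\tau\wedge T$ fails only on a null set, for every $T$. Hence the interior solution cannot blow up before $\tau$, and the maximal lifetime equals $\tau$. This proves (i).

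\textbf{Step 4: moment bound.} Apply Fatou's lemma as $R\to\infty$, using $\tau_R\to\tau$ and path continuity. Combining with $V_p(z_0)\le C_p(1+|z_0|^p)$ gives \eqref{eq:higher_moment_bound_appD}. This works for every $p\ge2$, which covers the stated cases $p=2$ and $p\ge4$.

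\textbf{Main obstacle.} The delicate point is Step 1: one must choose a Lyapunov function whose generator has no superlinear growth. If we used $|z|^2$ directly, the term $\mu\cdot z$ would contain $-\frac{mN^2P}{1+N}+\frac{mNP^2}{1+N}$. The positive piece $\frac{mNP^2}{1+N}$ is cubic-type and is not dominated without extra work. The additive function $S=N+P$ avoids this, because predation conserves total mass both in the drift and in the noise. Should a direct $|z|^p$ estimate be wanted instead, we would fall back on the comparability of $|z|^p$ and $V_p$ rather than attempt it.
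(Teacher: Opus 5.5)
Your proof is correct, but it takes a different route from the paper's. The paper works with $V_q(z)=|z|^q$ directly. It uses $z^\top\Sigma z\le |z|^2\operatorname{tr}\Sigma$ to get $\mathcal L V_q\le q|z|^{q-2}\langle z,\mu\rangle+\tfrac{q(q-1)\rho^2}{2}|z|^{q-2}\operatorname{tr}\Sigma$, and then inserts generic linear-growth bounds $\langle z,\mu\rangle\le C_0(1+|z|^2)$ and $\operatorname{tr}\Sigma\le C_0(1+|z|^2)$. It finishes with the same localization/Gronwall/Chebyshev argument, passing to $t\wedge\tau$ via uniform integrability from a higher moment and Vitali's theorem.

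You instead take $(1+N+P)^p$ and exploit $\mathbf 1^\top\Delta_E=0$. The predation--conversion channel conserves $N+P$, so $\lambda_E$ cancels from both $\mu_1+\mu_2$ and $\mathbf 1^\top\Sigma\mathbf 1$; the second cancellation is precisely the negative $\Sigma_{12}$ at work. This buys you several things:
\begin{itemize}
\item a one-line bound $\mathcal L V_p\le K_pV_p$ with $K_p$ independent of $m$;
\item validity for every $p\ge1$, so the restriction to $p=2$ or $p\ge4$ plays no role;
\item a limit passage by Fatou's lemma alone, since only an upper bound is needed;
\item robustness to the functional response: with an unsaturated (type I) rate $mNP$, the term $mNP^2$ in $\langle z,\mu\rangle$ would break the paper's $|z|^q$ estimate, but not yours.
\end{itemize}
The paper's route, in turn, needs no conservation structure, only linear growth of $\langle z,\mu\rangle$ and $\operatorname{tr}\Sigma$, and it bounds $|z|^p$ without a comparability step.

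There are two small points to fix. First, your ``main obstacle'' remark is overstated. Since $N/(1+N)<1$, one has $\frac{mNP^2}{1+N}\le mP^2$, so $\langle z,\mu\rangle\le N^2+mP^2$ with no extra work. This Holling saturation is exactly what makes the paper's direct estimate go through.

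Second, in Step~3 the sentence ``$\zeta\wedge T>\tau\wedge T$ fails only on a null set'' is garbled: taken literally, it is false whenever $\tau\ge T$. What your estimate actually gives is $\mathbb P(\zeta\le\tau\wedge T)=0$. This holds because $\sigma_R\le\zeta$ implies $\{\zeta\le\tau\wedge T\}\subset\{\sigma_R\le\tau\wedge T\}$ for every $R$. A union over $T\in\mathbb N$ then yields no blow-up before $\tau$.
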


For completeness, details are provided in Appendix~\ref{app:higher_moments}. Related population models also establish global well-posedness by explicit moment bounds \citep{liang_global_2025}.

\section{Boundary structure and extinction results}
\label{sec:boundary_and_extinction_A}

This section collects qualitative results on extinction for the absorbed diffusion introduced in
Section~\ref{sec:sde_absorption_wellposedness}. 
We emphasize two complementary viewpoints. The first is the absorbed dynamics, in which trajectories are frozen upon reaching the axes and extinction is irreversible. The second is an optional model completion along the boundary, used only for ecological interpretation
of post-extinction dynamics of the remaining species. 

Absorbing axes and reduced boundary dynamics (optional completion). Recall the open quadrant $U:=(0,\infty)^2$ and the absorption time
\begin{equation}\label{eq:tau_def_boundary_A}
\tau:=\inf\{t\ge 0:\ Z(t)\in \partial U\},
\qquad \partial U=\bigl(\{0\}\times[0,\infty)\bigr)\cup\bigl([0,\infty)\times\{0\}\bigr),
\end{equation}
for the interior diffusion $Z(t)=(N(t),P(t))^\top$ constructed in Section~\ref{sec:sde_absorption_wellposedness}.
We write $\widehat Z$ for the absorbed extension (frozen after $\tau$), as in Definition~\ref{def:absorbed_process}.

\begin{proposition}[Absorbing axes for the frozen extension]\label{prop:axes_absorbing_frozen_A}
Let $\widehat Z$ be the absorbed process defined by $\widehat Z(t)=Z(t)$ for $t<\tau$ and
$\widehat Z(t)=Z(\tau)$ for $t\ge \tau$. Then $\partial U$ is absorbing for $\widehat Z$:
if $\widehat Z(t_0)\in\partial U$ for some $t_0\ge 0$, then $\widehat Z(t)\equiv \widehat Z(t_0)$ for all $t\ge t_0$.
In particular, the coordinate axes represent irreversible extinction events in the absorbed model.
\end{proposition}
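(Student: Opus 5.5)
The claim follows almost directly from the freezing rule \eqref{eq:absorbed_extension} combined with the definition of $\tau$ in \eqref{eq:tau_def_boundary_A}. The plan is first to show that the hypothesis $\widehat Z(t_0)\in\partial U$ forces $t_0\ge\tau$. Then $\widehat Z$ is constant from $\tau$ onward, and in particular from $t_0$ onward. Throughout, we work on the almost sure event on which the interior solution $Z$ from Theorem~\ref{thm:strong_wellposedness_tau} is continuous and $U$-valued on $[0,\tau)$.

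\emph{Step 1 (localizing $t_0$).} Suppose $\widehat Z(t_0)\in\partial U$. If $t_0<\tau$, then $\widehat Z(t_0)=Z(t_0)\in U$ by Definition~\ref{def:interior_SDE_tau}. This contradicts $U\cap\partial U=\emptyset$, so $t_0\ge\tau$; in particular $\tau<\infty$ on this event.

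\emph{Step 2 (well-definedness of $Z(\tau)$).} To make sense of $Z(\tau)$ when $\tau<\infty$, we take the left limit $\lim_{t\uparrow\tau}Z(t)$. This limit exists and is finite because, by Theorem~\ref{thm:nonexplosion_second_moment}(i), there is no explosion before $\tau$, so $\zeta\ge\tau$ and the path stays bounded on $[0,\tau]$. Continuity of paths then identifies the limit with $Z(\tau)$. Since $Z(t)\in U$ for $t<\tau$, the limit lies in $\overline U$. By the definition of $\tau$ as the first exit time, $Z(\tau)$ lies in $\partial U$ (a closed set) and not in $U$, since $U$ is open and the infimum is attained by continuity. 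This step is the only mildly delicate point: one must rule out the degenerate case where $\tau$ is an explosion time rather than a boundary hit, and the non-explosion result is exactly what handles this.

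\emph{Step 3 (conclusion).} For all $t\ge t_0\ge\tau$, \eqref{eq:absorbed_extension} gives $\widehat Z(t)=Z(\tau)=\widehat Z(t_0)$. Hence $\widehat Z$ is constant on $[t_0,\infty)$, which proves that $\partial U$ is absorbing. The interpretive statement is immediate: once $N$ or $P$ vanishes, the state, and in particular the vanished coordinate, remains fixed, mirroring the absorbing axes of the CTMC in Section~\ref{subsec:ctmc_absorbing_axes}.
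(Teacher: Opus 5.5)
Your argument is correct and matches the paper, which gives no separate proof and treats the proposition as an immediate consequence of the freezing rule \eqref{eq:absorbed_extension}; that is exactly your Steps 1 and 3 (a boundary value of $\widehat Z$ forces $t_0\ge\tau$, after which $\widehat Z\equiv Z(\tau)$). Step 2 is not needed, since the statement takes $\widehat Z$ as given, and as written it is loose: bounded paths on $[0,\tau)$ (which needs $\zeta=\infty$, not merely $\zeta\ge\tau$) need not have a left limit by themselves, but they do once you note that $\mu$ and $L_{\rm ev}$ are bounded on the bounded region visited, so the drift integral converges and the stochastic integral has finite quadratic variation up to $\tau$.
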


For ecological interpretation it is sometimes convenient to continue the dynamics after one species has gone
extinct, letting the remaining species evolve according to the corresponding one-dimensional diffusion obtained by
restricting the coefficients to the boundary. This completion is not used in the extinction proofs below
(which are formulated for the absorbed process), and it is introduced only as an auxiliary model component.

\begin{definition}[Axis-restricted diffusions]\label{def:axis_restricted_diffusions_A}
Let $\rho>0$ be the demographic noise amplitude. Define the prey-axis and predator-axis coefficients by restriction:
\[
\mu_N(N,0)=N-\frac{N^2}{k},\qquad
\Sigma_{11}(N,0)=N+\frac{N^2}{k},\qquad N\ge 0,
\]
and
\[
\mu_P(0,P)=-cP,\qquad
\Sigma_{22}(0,P)=cP,\qquad P\ge 0.
\]
The prey-axis diffusion $N^{(0)}$ is the one-dimensional SDE on $[0,\infty)$
\begin{equation}\label{eq:prey_axis_SDE_A}
dN^{(0)}(t)=\Bigl(N^{(0)}(t)-\frac{(N^{(0)}(t))^2}{k}\Bigr)\,dt
+\rho\,\sqrt{N^{(0)}(t)+\frac{(N^{(0)}(t))^2}{k}}\,dB_1(t),
\end{equation}
and the predator-axis diffusion $P^{(0)}$ is the one-dimensional SDE on $[0,\infty)$
\begin{equation}\label{eq:pred_axis_SDE_A}
dP^{(0)}(t)=-cP^{(0)}(t)\,dt+\rho\,\sqrt{cP^{(0)}(t)}\,dB_2(t).
\end{equation}
Here, $B_1,B_2$ are standard one-dimensional Brownian motions (independent, unless specified otherwise).
\end{definition}

\begin{lemma}[Well-posedness on the axes]\label{lem:axis_wellposedness_A}
For any initial condition $N^{(0)}(0)\ge 0$ (resp.\ $P^{(0)}(0)\ge 0$), the SDE \eqref{eq:prey_axis_SDE_A}
(resp.\ \eqref{eq:pred_axis_SDE_A}) admits a pathwise unique strong solution with values in $[0,\infty)$.
Moreover, $0$ is absorbing for both axis-restricted diffusions.
\end{lemma}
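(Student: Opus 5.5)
The plan is to treat the two axis equations separately. Both have drift that is locally Lipschitz on $[0,\infty)$, and both have diffusion coefficients of square-root type. We therefore invoke the one-dimensional Yamada--Watanabe theorem: pathwise uniqueness holds when the drift is locally Lipschitz and the diffusion coefficient $\sigma$ satisfies $|\sigma(x)-\sigma(y)|\le h(|x-y|)$ with $\int_{0^+}h(u)^{-2}\,du=\infty$. Existence of a weak solution, together with pathwise uniqueness, then yields a unique strong solution. To make the equations meaningful for all real arguments, we first extend the coefficients to $\mathbb{R}$. For the predator axis we use $b(x)=-cx$ and $\sigma(x)=\rho\sqrt{c\,x^+}$. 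For the prey axis we use $b(x)=x-x^2/k$ and $\sigma(x)=\rho\sqrt{x^++(x^+)^2/k}$.

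\textbf{Predator axis.} Equation \eqref{eq:pred_axis_SDE_A} is a CIR/Feller diffusion with zero mean-reversion level. The drift is globally Lipschitz. The diffusion coefficient satisfies $|\sqrt{x^+}-\sqrt{y^+}|\le\sqrt{|x-y|}$, so $h(u)=\rho\sqrt{c}\,u^{1/2}$ meets the Yamada--Watanabe criterion. The coefficients also have linear growth, so weak existence holds by Skorokhod's theorem, and strong existence and uniqueness follow.

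Nonnegativity is checked by applying Tanaka's formula to $x^-$. Since $\sigma$ vanishes on $(-\infty,0]$, the local time of $P^{(0)}$ at $0$ vanishes, and one gets $d(P^-)=\mathbf 1_{\{P<0\}}cP^-\,dt$. Because $P^-(0)=0$, this gives $P^-\equiv0$.

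Absorption at $0$ holds because the constant process $0$ solves the equation from $0$. By pathwise uniqueness applied after the stopping time $\inf\{t:P^{(0)}(t)=0\}$, using the strong Markov property or simply uniqueness of the restarted equation, the solution stays at $0$ afterwards.

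\textbf{Prey axis.} The diffusion coefficient is again locally $1/2$-Hölder near $0$. Indeed, $\sqrt{a}-\sqrt{b}\le\sqrt{a-b}$ and $x\mapsto x+x^2/k$ is locally Lipschitz, so on $[-R,R]$ we have $|\sigma(x)-\sigma(y)|\le C_R|x-y|^{1/2}$. Away from $0$ the coefficient is locally Lipschitz. Yamada--Watanabe therefore gives pathwise uniqueness up to the exit time $\sigma_R$ of $[-R,R]$ for every $R$. Truncating the coefficients outside $[-R,R]$ gives local strong existence, and the local solutions are patched together by uniqueness.

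Nonnegativity and absorption at $0$ follow exactly as for the predator axis, since $\sigma(0)=0=b(0)$.

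\textbf{Main obstacle: non-explosion of the prey diffusion.} The quadratic terms make the coefficients non-Lipschitz at infinity, so the local solution must be shown to live for all time. Apply Itô's formula to $V(x)=1+x^2$ for $x\ge0$:
\[
\mathcal L V=2x\Bigl(x-\tfrac{x^2}{k}\Bigr)+\rho^2\Bigl(x+\tfrac{x^2}{k}\Bigr).
\]
The $-2x^3/k$ term dominates at infinity, so $\mathcal L V\le C\,V$ on $[0,\infty)$. The standard Khasminskii argument then gives $\mathbb E V(N^{(0)}(t\wedge\sigma_R))\le V(N_0)e^{Ct}$, hence $\mathbb P(\sigma_R\le t)\to0$ as $R\to\infty$, and the solution is global.

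The one subtle point is that the Yamada--Watanabe criterion is needed only locally, near $0$, where the square-root singularity occurs. It must be combined with localization because global linear growth fails for the prey equation.
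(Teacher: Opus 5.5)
Your proposal is correct and follows essentially the same route as the paper, which cites standard CIR theory for the predator axis and Yamada--Watanabe-type criteria for the H\"older-$\tfrac12$ prey-axis coefficient, and deduces absorption at $0$ from the vanishing of drift and diffusion there. Your version is more complete: the paper's citation passes over non-explosion for the prey equation, whose drift $x-x^2/k$ is not of linear growth, and your Lyapunov step supplies it, while your absorption-by-pathwise-uniqueness argument makes the paper's absorption claim precise. The one slip is a sign in your Tanaka computation, which should read $d(P^-)=-c\,\mathbf 1_{\{P<0\}}P^-\,dt$, but Gronwall gives $P^-\equiv 0$ either way.
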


\begin{proof}
For \eqref{eq:pred_axis_SDE_A} this is the classical Cox--Ingersoll--Ross (CIR) square-root diffusion with linear drift,
for which strong existence and uniqueness are standard; see, e.g.,
\citep{karatzas_brownian_1998,revuz_continuous_1999}. Since both drift and diffusion vanish at $0$,
the boundary point $0$ is absorbing. The invariance of $[0,\infty)$ follows.

For \eqref{eq:prey_axis_SDE_A}, the drift is locally Lipschitz and the diffusion coefficient $x\mapsto \sqrt{x+x^2/k}$ is H\"older-$1/2$ and of linear growth on $[0,\infty)$. Standard one-dimensional criteria (e.g.\ Yamada--Watanabe-type results for H\"older-$1/2$ diffusion coefficients) yield strong existence/uniqueness and nonnegativity. We refer to \citep{jeanblanc_mathematical_2009,revuz_continuous_1999} for these results. Absorption at $0$ follows
since both drift and diffusion vanish at $0$.
\end{proof}

Positive probability of extinction for all parameters. We next prove that, regardless of parameters, extinction occurs with strictly positive probability starting from any
interior state. Recall the absorption time $\tau=\inf\{t\ge0:\,Z(t)\in\partial U\}$ from \eqref{eq:tau_def_A}.
Throughout, $Z$ denotes the unique strong solution on $[0,\tau)$ from
Theorem~\ref{thm:strong_wellposedness_tau}, and $\widehat Z$ denotes the absorbed extension.

\begin{theorem}[Positive extinction probability]\label{thm:positive_extinction_probability}
Assume $k>0$, $m>0$, $c>0$ and $\rho>0$. Then for every initial condition $z=(N_0,P_0)\in U$,
\begin{equation}\label{eq:positive_ext_prob_A}
\mathbb{P}_z(\tau<\infty)>0.
\end{equation}
Equivalently, $\mathbb{P}_z(\widehat Z(t)\in \partial U \text{ for some }t\ge 0)>0$ for all $z\in U$.
\end{theorem}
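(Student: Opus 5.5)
We prove that the predator coordinate hits zero with positive probability, via a comparison with a one-dimensional CIR-type diffusion. The plan has three steps.

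\textbf{Step 1 (reduction).} Since $\tau\le\tau_P:=\inf\{t:P(t)=0\}$, it suffices to show $\mathbb P_z(\tau_P<\infty)>0$ on the event $\{\tau=\tau_P\}$, or simply $\mathbb P_z(\tau<\infty)>0$.

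\textbf{Step 2 (predator equation and bounded prey).} By It\^o's formula applied to the predator component of \eqref{eq:SDE_tau_recall},
\[
dP=\Bigl(\tfrac{mN}{1+N}-c\Bigr)P\,dt+\rho\sqrt{\Sigma_{22}(N,P)}\,d\beta(t),
\]
where $\beta$ is a one-dimensional Brownian motion. It is obtained by normalizing the second row of $L_{\rm ev}$, which is legitimate on $U$ because $\Sigma_{22}>0$ there. The drift satisfies $(\tfrac{mN}{1+N}-c)P\le (m-c)P\le mP$, and the diffusion coefficient satisfies $\Sigma_{22}\in[cP,(c+m)P]$. We then fix a horizon $T=1$ and a large $R$. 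By Theorem~\ref{thm:nonexplosion_second_moment} and Markov's inequality, the event $\{\sup_{t\le T\wedge\tau}|Z(t)|\le R\}$ has probability at least $1/2$ for $R$ large.

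\textbf{Step 3 (hitting zero).} We work with the scale function of $P$, namely $\sqrt P$ or a time change. Set $Y=2\sqrt{P}/\rho$; It\^o's formula gives
\[
dY=\Bigl(\tfrac{(\psi(N))}{2}Y-\tfrac{\Sigma_{22}}{2\rho P^{\,}}\cdot\tfrac{1}{Y}\cdot\tfrac{\rho}{1}\cdots\Bigr)dt+\sqrt{\Sigma_{22}/P}\,d\beta.
\]
This has volatility bounded between $\sqrt c$ and $\sqrt{c+m}$ and a drift of the form $a(t)Y-b(t)/Y$ with $b(t)\in[c/4,(c+m)/4]\cdot$const$>0$. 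The term $-b/Y$ is \emph{negative}, which pushes the process toward zero. After a random time change to unit volatility, $Y$ is dominated by a process whose drift is at most $mY$. A Girsanov change of measure removes the bounded drift $mY\le m\cdot 2\sqrt R/\rho$ on $\{|Z|\le R\}$, so the law becomes equivalent to that of a time-changed Brownian motion minus a nonnegative drift. Such a process hits $0$ before time $T$ with positive probability. Equivalence of measures on $\mathcal F_{T}$, together with the localization to $\{|Z|\le R\}$, then transfers positivity to $\mathbb P_z$. Care is needed because positive-probability events under equivalent measures remain positive only within the localized event. We handle this by stopping at $\sigma_R$ and noting that the hitting event intersected with $\{\sigma_R>T\}$ has positive probability under the Girsanov measure. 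The latter follows from a support argument, since Brownian paths staying in a tube around a path descending to zero while the prey stays bounded.

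\textbf{Main obstacle.} The main obstacle is Step 3 near the boundary, where the coefficients are degenerate and not Lipschitz. If the Girsanov route becomes messy, the fallback is a Stroock--Varadhan support theorem on compact subsets of $U$: steer to a small neighbourhood $\{P\le\varepsilon,\ N\le R\}$ with positive probability. From there, use the comparison $P\le \tilde P$ on the bounded-prey event, where $\tilde P$ solves a CIR equation with drift $(m-c)\tilde P$ and diffusion $\rho\sqrt{\cdot}$-scaled. A CIR process with zero constant term hits $0$ in finite time with probability $\exp(-\text{const}/\varepsilon)>0$, made positive within time $T$ by explicit Laplace transforms. Combining this with the strong Markov property at the entry time into the neighbourhood gives \eqref{eq:positive_ext_prob_A}.
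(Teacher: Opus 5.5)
There is a genuine gap, and it sits exactly at the step you flag as the main obstacle.

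\textbf{Main (Girsanov) route.} Dominating $Y$ by a time-changed Brownian motion on $[0,\tau\wedge\sigma_R)$ only shows that $\tau\wedge\sigma_R\le T$ with positive $\mathbb Q$-probability. The whole content of the theorem is that this can happen through $\tau$ rather than through $\sigma_R$, i.e.\ that $Z$ reaches $\partial U$ before $|Z|$ reaches $R$. You settle this only by an unspecified ``support argument'', and that argument does not apply here:
\begin{enumerate}[label=(\roman*)]
\item Support theorems of Stroock--Varadhan type need Lipschitz coefficients along the tube. Your tube ends on $\{P=0\}$, where $\sqrt{\Sigma_{22}}\sim\sqrt P$ is not Lipschitz and the transformed drift $-b/Y$ is singular.
\item The measure change acts on the four-dimensional driver $W$, so the prey equation is also modified under $\mathbb Q$. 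Nothing in the proposal controls $|Z|$ under $\mathbb Q$ until $P$ vanishes.
\end{enumerate}

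\textbf{Fallback route.} This fails for a different reason. As written, the pathwise comparison $P\le\tilde P$ with a CIR process of \emph{fixed} volatility is not available. Comparison theorems (Ikeda--Watanabe, Yamada) require the same diffusion coefficient in both equations. Here, however, $d\langle P\rangle_t=\rho^2\theta_tP_t\,dt$ with the random factor $\theta_t:=c+mN_t/(1+N_t)\in[c,c+m)$. At a time where $P=\tilde P$ the difference still has a nonvanishing martingale part, so the usual proof breaks down and the ordering is not preserved in general. A smaller point: a Feller diffusion with supercritical linear drift started at $\varepsilon$ hits $0$ with probability $\exp(-\mathrm{const}\cdot\varepsilon)$, not $\exp(-\mathrm{const}/\varepsilon)$. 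This does not affect positivity.

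\textbf{The missing idea} is a comparison that tolerates a variable volatility factor.

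The paper gets this from a scale-function supermartingale. In Lemma~\ref{lem:strip_hit_with_sigma}, the CIR scale-type function $v$ is increasing and concave. Hence the one-sided bounds $\mu_2\le\kappa P$ and $\Sigma_{22}\ge\alpha P$ already make $v(P_{t\wedge\theta})$ a supermartingale, with no pathwise coupling. The paper combines this with a Dirichlet heat-kernel lower bound, which steers the process into a thin strip near $\{P=0\}$, and with the strong Markov property.

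Your own time-change idea gives an even shorter repair:
\begin{enumerate}[label=(\roman*)]
\item Set $A_t=\int_0^t\theta_s\,ds\ge ct$. The time-changed predator $\hat P_s:=P_{A^{-1}(s)}$ solves $d\hat P=(\psi/\theta)(\hat N)\,\hat P\,ds+\rho\sqrt{\hat P}\,d\hat B$ for a Brownian motion $\hat B$.
\item On all of $U$ one has $\psi/\theta\le\bar\kappa:=\max\{m-c,0\}/(m+c)$.
\item Both equations now have diffusion coefficient $\rho\sqrt{x}$. So comparison with $d\tilde P=\bar\kappa\tilde P\,ds+\rho\sqrt{\tilde P}\,d\hat B$, $\tilde P_0=P_0$, is legitimate, and gives $0<\hat P_s\le\tilde P_s$ for $s<A_\tau$.
\item Let $\tilde\tau_0$ be the hitting time of $0$ by $\tilde P$. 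On $\{\tilde\tau_0<\infty\}$ this forces $c\tau\le A_\tau\le\tilde\tau_0<\infty$.
\end{enumerate}
Hence $\mathbb P_z(\tau<\infty)\ge\exp(-2\bar\kappa P_0/\rho^2)>0$. Because the bounds on $\psi$ and $\theta$ are global, this needs no localization in $R$, no Girsanov transform and no steering step.
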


For completeness, we include a self-contained proof in Appendix~\ref{app:positive_extinction_probability}.

Almost sure predator extinction in the subcritical regime. We now identify a parameter regime in which predator extinction occurs almost surely. The key assumption is that the
maximal per-capita predator growth rate does not overcome baseline mortality:
\begin{equation}\label{eq:subcritical_condition}
m\le c.
\end{equation}
In this regime, the predator has nonpositive drift throughout the interior, and the square-root demographic noise
remains active down to the boundary, forcing eventual absorption at $P=0$.

\begin{theorem}[Almost sure predator extinction in the subcritical regime]\label{thm:as_predator_extinction_subcritical}
Assume \eqref{eq:subcritical_condition}. Then for every initial condition $z=(N_0,P_0)\in U$, the predator
component of the interior diffusion satisfies
\begin{equation}\label{eq:as_pred_ext_A}
\mathbb{P}_z\Bigl(\inf\{t\ge 0:\ P(t)=0\}<\infty\Bigr)=1.
\end{equation}
Consequently, $\mathbb{P}_z(\tau<\infty)=1$ and the absorbed process $\widehat Z$ satisfies
$\widehat Z(t)\in \partial U$ for all sufficiently large $t$, almost surely.
\end{theorem}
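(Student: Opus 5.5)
The plan is a Lyapunov argument for the predator coordinate alone, run on $[0,\tau)$. It uses only the sign of the predator drift and the lower bound on the predator's own noise. Write $Z=(N,P)$ for the interior solution of Theorem~\ref{thm:strong_wellposedness_tau}. From \eqref{eq:mu_explicit} and \eqref{eq:Sigma_explicit}, on $[0,\tau)$ we have
\[
dP=P\Bigl(\tfrac{mN}{1+N}-c\Bigr)dt+dM_t,\qquad d\langle M\rangle_t=\rho^2\Bigl(cP+\tfrac{mNP}{1+N}\Bigr)dt\ge \rho^2 cP\,dt .
\]
Under \eqref{eq:subcritical_condition} we have $mN/(1+N)<m\le c$, so the drift is nonpositive. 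Hence $P(\cdot\wedge\tau\wedge\sigma_R)$ is a nonnegative supermartingale. Together with the non-explosion statement of Theorem~\ref{thm:nonexplosion_second_moment}, this gives two facts: $\sup_{t<\tau}P(t)<\infty$ a.s., and, by Doob's maximal inequality, $\mathbb P_z(\sup_{t<\tau}P(t)\ge R)\le P_0/R$.

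The key step applies It\^o's formula to $V(P)=\sqrt P$ on the stochastic interval where $P>0$. The drift of $V$ is
\[
\frac{1}{2\sqrt P}\,P\Bigl(\tfrac{mN}{1+N}-c\Bigr)-\frac{\rho^2}{8P^{3/2}}\Bigl(cP+\tfrac{mNP}{1+N}\Bigr)\le-\frac{\rho^2 c}{8\sqrt P}.
\]
Fix $R>P_0$ and let $\theta_R:=\inf\{t:\,P(t)\ge R\}$. On $\{P\le R\}$ the drift of $V$ is at most $-\kappa_R:=-\rho^2c/(8\sqrt R)$. The stopped process $V(P(t\wedge\tau\wedge\theta_R\wedge \eta_\varepsilon))+\kappa_R\,(t\wedge\cdots)$ is then a supermartingale, where $\eta_\varepsilon$ is the hitting time of $\{P\le\varepsilon\}$; this stopping keeps the coefficients of $V$ smooth. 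Its martingale part has bounded integrand, so optional stopping applies, and letting $\varepsilon\downarrow0$ and $t\to\infty$ yields
\[
\mathbb E_z[\tau\wedge\theta_R]\le \sqrt{P_0}/\kappa_R<\infty .
\]
Hence $\tau\wedge\theta_R<\infty$ a.s. Combining this with $\mathbb P_z(\theta_R<\infty)\le P_0/R$ and letting $R\to\infty$ gives $\tau<\infty$ a.s., i.e. the interior diffusion reaches $\partial U$ in finite time almost surely. This already proves the ``consequently'' part: $\mathbb P_z(\tau<\infty)=1$, and $\widehat Z$ eventually lies in $\partial U$ by Proposition~\ref{prop:axes_absorbing_frozen_A}.

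The main obstacle is upgrading ``$\partial U$ is hit'' to ``$\{P=0\}$ is hit''. On the event $\{N(\tau)=0,\,P(\tau)>0\}$ the absorbed process is frozen with $P>0$. So the literal statement must be read via the axis completion of Definition~\ref{def:axis_restricted_diffusions_A}. That is, after prey extinction the predator continues as the CIR diffusion \eqref{eq:pred_axis_SDE_A}, started from $P(\tau)$ by the strong Markov property. For that process the same computation with $V=\sqrt P$ gives the drift bound $-\rho^2c/(8\sqrt P)$, now without any $N$-dependence. The bound on $\mathbb E[\text{hitting time}\wedge\theta_R]$ together with the supermartingale bound on $\theta_R$ shows that $0$ is hit in finite time a.s. This is consistent with the classical fact that the CIR process with zero mean-reversion level and $2\cdot 0\le \rho^2 c$ hits $0$ (Feller's test); this is where I would invoke Appendix~\ref{app:cir_hitting}.

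Putting the cases $\{P(\tau)=0\}$ and $\{N(\tau)=0\}$ together yields \eqref{eq:as_pred_ext_A}. The delicate points I expect to need care are justifying It\^o's formula for $\sqrt P$ up to the hitting of $0$, handled by the $\varepsilon$-localization above, and making the strong-Markov concatenation at $\tau$ rigorous.
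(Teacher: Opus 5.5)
Your proof is correct, but it follows a different route from the paper's.

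\textbf{The paper's route.} In Appendix~\ref{app:cir_hitting} the drift is removed by the exponential tilt $X_t=e^{-A_t}P_t$, where $A_t=\int_0^t\bigl(\tfrac{mN_s}{1+N_s}-c\bigr)\,ds\le 0$. Then $X$ is a nonnegative continuous local martingale with $d\langle X\rangle_t\ge\rho^2cX_t\,dt$. Dambis--Dubins--Schwarz is used to argue that $\langle X\rangle_\infty=\infty$ on $\{\tau_0=\infty\}$, which forces the time-changed Brownian motion to reach $-X_0$.

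\textbf{Your route.} You use $\sqrt P$ as a Lyapunov function, with $\mathcal L\sqrt P\le-\rho^2c/(8\sqrt P)$. This gives $\mathbb E_z[\tau\wedge\theta_R]\le 8\sqrt{P_0R}/(\rho^2c)$. You then control $\theta_R$ by Doob's maximal inequality for the nonnegative supermartingale $P$. Both routes rest on the same two inputs: $\mu_2\le 0$ and $\Sigma_{22}\ge cP$.

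\textbf{What each buys.} Your route is more elementary and quantitative. It yields $\mathbb P_z(\tau>t)\le P_0/R+8\sqrt{P_0R}/(\rho^2ct)$ for every $R$, hence a $t^{-2/3}$ tail after optimizing in $R$. It also avoids the delicate step in the time-change argument of excluding $X_t\to0$ without $X$ ever hitting $0$. The paper's appeal to a continuous conditional law of $B_{Q_\infty}$ does not settle that step, since $Q_\infty$ depends on $B$. The paper's route is shorter and purely pathwise: once $\langle X\rangle_\infty=\infty$ is known, Brownian recurrence finishes the proof, with no need to truncate large values of $P$ as you do with $\theta_R$.

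\textbf{The event $\{N(\tau)=0<P(\tau)\}$.} Your remark here is correct, and the paper does not address it. Its lemma is posed on $[0,\tau)$, but the proof works on $\{\tau_0=\infty\}$ as if the process lived on all of $[0,\infty)$. In other words, it tacitly uses the axis completion that you make explicit. After $\tau$, the completed predator still satisfies $a_t=-c\le0$ and $d\langle M\rangle_t=\rho^2cP_t\,dt$. So your $\sqrt P$ argument can be run once on the concatenated process rather than split into two cases.
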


For completeness, we include a self-contained proof in Appendix~\ref{app:cir_hitting}. Figure~\ref{fig:extinction_comparison} compares two stochastic extinction scenarios presented in Theorems~\ref{thm:positive_extinction_probability}--\ref{thm:as_predator_extinction_subcritical}.

\begin{figure}
    \centering
    \includegraphics[width=\linewidth]{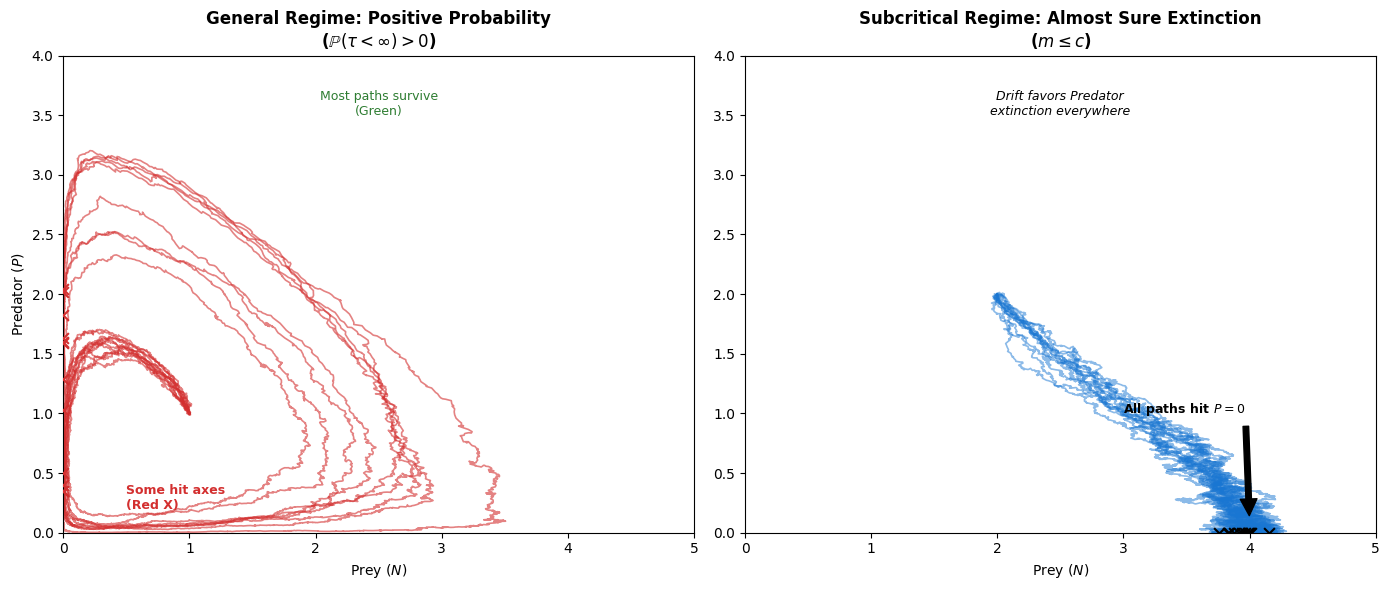}
\caption{\textbf{Stochastic extinction scenarios: positive probability vs.\ almost sure extinction.}
Numerical illustration of the main results in Section~\ref{sec:boundary_and_extinction_A}.
\textbf{(Left)} In the general regime (Theorem~\ref{thm:positive_extinction_probability}), even if the deterministic system predicts coexistence (limit cycles), stochastic fluctuations drive a fraction of paths to extinction (red trajectories marked with 'X'), while others survive transiently (green labels). Extinction is possible but not guaranteed.
\textbf{(Right)} In the subcritical regime $m \le c$ (Theorem~\ref{thm:as_predator_extinction_subcritical}), the predator goes extinct eventually. The drift is uniformly non-positive for the predator, forcing almost sure extinction (all blue paths hit the axis $P=0$) regardless of initial conditions.}
\label{fig:extinction_comparison}
\end{figure}

\section{Numerical methods and diagnostics}
\label{sec:numerics}
This section documents the numerical procedures used to illustrate and corroborate the qualitative statements proved
in Sections~\ref{sec:mech_ctmc}--\ref{sec:boundary_and_extinction_A}.

\subsection{Deterministic check}
\label{subsec:numerics_deterministic_brief}

We include a minimal deterministic sanity check for the Rosenzweig--MacArthur ODE \eqref{eq:RM_ODE}, used in this
paper only for parameter interpretation and as a qualitative reference for bounded coexistence versus deterministic
extinction.
All simulations in this subsection use a standard adaptive Runge--Kutta solver (RK45) with strict
tolerances, and only serve as brief consistency checks. 
We do not pursue detailed deterministic bifurcation computations in this work.
Throughout this numerical check we fix $(m,c,k)=(2.0,0.8,3.0)$ and integrate \eqref{eq:RM_ODE} from three
interior initial conditions.

Since $m>c$ and
\[
N^\ast=\frac{c}{m-c}=\frac{0.8}{2.0-0.8}=\frac{2}{3}\approx 0.6667,
\qquad k=3.0>N^\ast,
\]
the coexistence equilibrium $K_3=(N^\ast,P^\ast)$ exists (Proposition~\ref{prop:equilibria_A}), with
\[
P^\ast=\frac{1+N^\ast}{m}\Bigl(1-\frac{N^\ast}{k}\Bigr)\approx 0.6481,
\qquad\text{so that}\qquad
K_3\approx (0.6667,\,0.6481).
\]
For the same parameters, the Hopf threshold for the trace of the linearization at $K_3$ is
$k_H=1+2N^\ast\approx 2.3333$ (cf.\ Remark~\ref{rmk:hopf_bifurcation_dynamics} in the deterministic analysis).
Because $k=3.0>k_H$, the Jacobian trace at $K_3$ is positive (numerically $\operatorname{tr}J(K_3)\approx 8.89\times10^{-2}$),
so $K_3$ is unstable. Consistent with the local diagnosis, Figure~\ref{fig:hopf} shows that trajectories initialized in $U$ exhibit sustained
oscillations on the simulated time window and approach a closed orbit in the phase plane.

\begin{figure}[hbtp]
    \centering
    \includegraphics[width=0.65\linewidth]{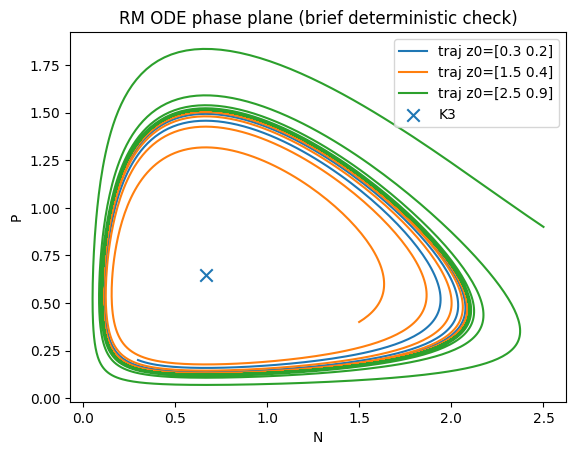}
    \caption{\textbf{Representative trajectories when the coexistence equilibrium exists with the asymptotically stable limit cycle.}}
    \label{fig:hopf}
\end{figure}

As a separate sanity check of the deterministic extinction condition, we simulate an example with $m\le c$:
$(m,c,k)=(0.8,0.8,1.0)$. In this subcritical regime, Theorem~\ref{thm:det_predator_extinction} predicts
$P(t)\to 0$ and $N(t)\to k$ for $N_0>0$, which is confirmed numerically (at $t=200$ we observe $N\approx 1.0$ and
$P\approx 0.0$ up to solver tolerance).

\subsection{Stochastic simulation of the absorbed mechanistic diffusion}
\label{subsec:numerics_sde}

We simulate the mechanistic full-covariance diffusion approximation on $U=(0,\infty)^2$,
\begin{equation}\label{eq:numerics_sde_model_A_rewrite}
dZ(t)=\mu(Z(t))\,dt+\rho\,L(Z(t))\,dB(t),\qquad Z(0)=z\in U,
\end{equation}
with drift $\mu$ and covariance $\Sigma=LL^\top$ given by \eqref{eq:mu_explicit}--\eqref{eq:Sigma_explicit}.
Unless stated otherwise, we fix the noise amplitude at $\rho=\Omega^{-1/2}=0.1$ (corresponding to $\Omega=100$),
and parameters $(k,m,c)=(3.0,2.0,0.8)$ in the super-Hopf coexistence setting.
All simulations use the absorbed convention of Section~\ref{sec:sde_absorption_wellposedness}: once a coordinate hits
$0$, the process is frozen on the boundary. This is the diffusion-level analogue of demographic irreversibility in
the CTMC (Section~\ref{subsec:ctmc_absorbing_axes}).

We employ the EM scheme with time step $\Delta t$:
\begin{equation}\label{eq:EM_update_A_rewrite}
Z_{n+1}=Z_n+\mu(Z_n)\Delta t+\rho\,L(Z_n)\,\Delta B_n,\qquad
\Delta B_n\sim \mathcal N(0,\Delta t\,I),
\end{equation}
where $Z_n\approx Z(n\Delta t)$ and the dimension of the Gaussian increment $\Delta B_n$ matches the chosen factor
$L$ (event factorization: $4$D; Cholesky factorization: $2$D).

Since the target continuous-time model is absorbed at $\partial U$ and then frozen, we enforce absorption at the
discrete level by the following clipping+freezing rule. Define the discrete absorption time
\[
\tau_{\Delta t}:=\inf\{n\Delta t:\ (Z_n)_1\le 0\ \text{or}\ (Z_n)_2\le 0\}.
\]
At the first step where a component becomes nonpositive, we clip componentwise,
\[
(Z_{n+1})_i \leftarrow \max\{(Z_{n+1})_i,\,0\},\qquad i=1,2,
\]
and for all subsequent steps we freeze,
\[
Z_{n+1}=Z_n\quad\text{for all }n\Delta t\ge \tau_{\Delta t}.
\]
This absorption treatment is a numerical implementation of the demographic extinction convention: once prey or
predator density reaches $0$, the corresponding species is extinct and the state remains on the boundary thereafter.

We implement \eqref{eq:numerics_sde_model_A_rewrite} using either the event-based factorization $L=L_{\rm ev}\in\mathbb{R}^{2\times 4}$ \eqref{eq:Levent_explicit_A},
or the Cholesky factorization $L=L_{\rm chol}\in\mathbb{R}^{2\times 2}$ \eqref{eq:Cholesky_explicit}.
Figure~\ref{fig:sde_absorbed_paths} shows representative absorbed EM trajectories under both factorizations. In the Cholesky-based sample path, the prey component crosses $N=0$ boundary, triggering absorption on the finite simulation window. Accordingly, the discrete absorption time
is $\tau_{\Delta t}=87.78$ for the particular realization.
In contrast, the event-based sample path predicts the extinction of the predator population with $\tau_{\Delta t}=51.31$.
The extinction of either population is consistent with the model. The theory
establishes a positive extinction probability in the general parameter regime (Theorem~\ref{thm:positive_extinction_probability}),
so absorption on a finite horizon occurs with positive probability.

To summarize extinction events statistically, we compute the survival curve
$t\mapsto \mathbb{P}(\tau_{\Delta t}>t)$ by Monte Carlo sampling. Figure~\ref{fig:sde_survival_curve} reports an
estimate based on $M=1000$ independent absorbed EM paths (Cholesky factorization, $\Delta t=10^{-2}$, $T=100$). The
resulting decay of the survival probability over the simulation window provides a direct numerical diagnostic of
extinction-by-absorption under demographic noise, complementing the qualitative statement of
Theorem~\ref{thm:positive_extinction_probability}.

\begin{figure}[htbp]
    \centering
    \includegraphics[width=0.45\textwidth]{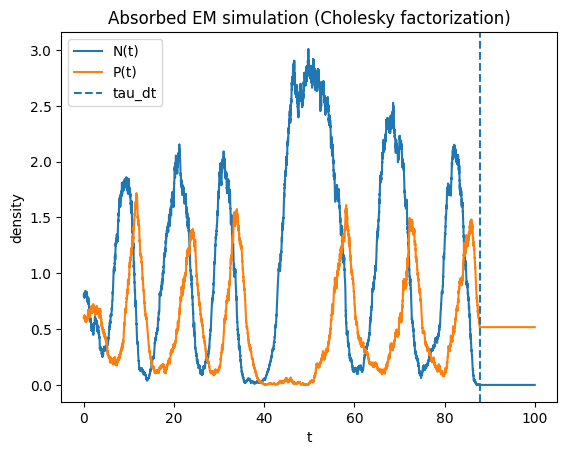}\hspace{1pt}
    \includegraphics[width=0.45\textwidth]{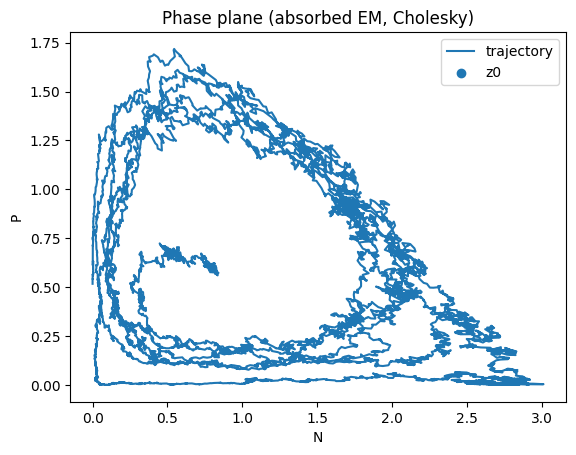}\par
    \includegraphics[width=0.5\textwidth]{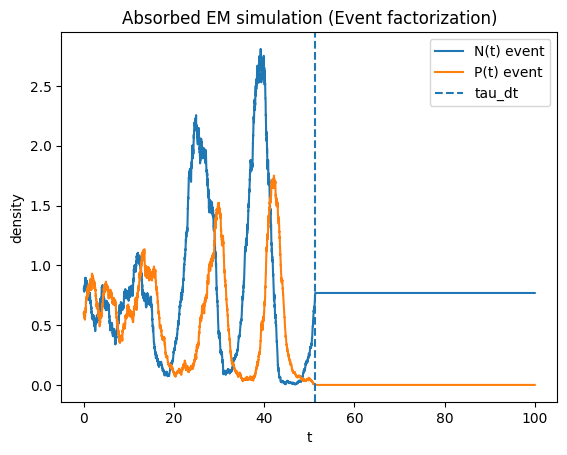}
    \caption{\textbf{Absorbed Euler--Maruyama (EM) simulation (illustrative paths).}
    Representative absorbed EM trajectories for the mechanistic diffusion \eqref{eq:numerics_sde_model_A_rewrite} using
    (top two) the Cholesky factorization $L_{\rm chol}$ and (bottom) the event factorization $L_{\rm ev}$.
    Absorption is enforced by clipping at the first nonpositive coordinate and freezing thereafter.
    Parameters: $(k,m,c)=(3.0,2.0,0.8)$, $\rho=0.1$ ($\Omega=100$), $\Delta t=10^{-2}$, $T=100$, $z_0=(0.8,0.6)$.
    In the displayed single-path realizations, $\tau_{\Delta t}=87.78$ and $\tau_{\Delta t}=51.31$ on $[0,T]$ for the Cholesky-based and event-based sample path.}
    \label{fig:sde_absorbed_paths}
\end{figure}

\begin{figure}[htbp]
    \centering
    \includegraphics[width=0.6\textwidth]{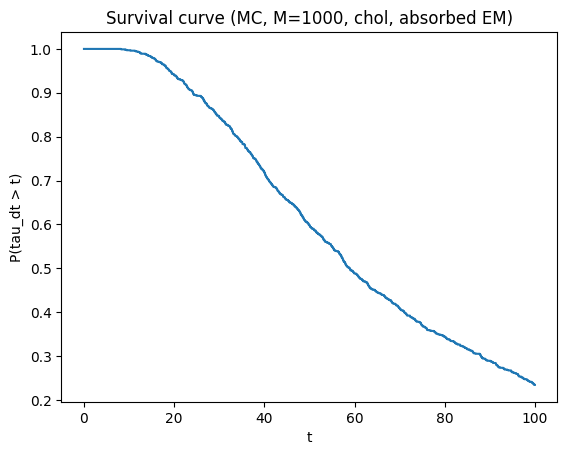}
    \caption{\textbf{Survival curve under absorbed EM (Monte Carlo).}
    Monte Carlo estimate of $t\mapsto \mathbb{P}(\tau_{\Delta t}>t)$ based on $M=1000$ absorbed EM paths
    (Cholesky factorization). Parameters: $(k,m,c)=(3.0,2.0,0.8)$, $\rho=0.1$, $\Delta t=10^{-2}$, $T=50$,
    $z_0=(0.8,0.6)$. The decay over the simulated horizon provides a finite-resolution diagnostic of extinction-by-absorption.}
    \label{fig:sde_survival_curve}
\end{figure}

\subsection{Factorization consistency (event versus Cholesky)}
\label{subsec:numerics_factorization_consistency_A}

On the interior $U$, both the event-based factorization $L_{\rm ev}\in\mathbb{R}^{2\times 4}$ \eqref{eq:Levent_explicit_A} and the Cholesky factorization $L_{\rm chol}\in\mathbb{R}^{2\times 2}$ \eqref{eq:Cholesky_explicit} satisfy
\[
L_{\rm ev}(z)L_{\rm ev}(z)^\top=\Sigma(z)=L_{\rm chol}(z)L_{\rm chol}(z)^\top,\qquad z\in U,
\]
and therefore represent the same diffusion in continuous time. 
Our implementation uses a time discretization
and an absorbed stopping rule at $\partial U$ (Section~\ref{subsec:numerics_sde}). Therefore, it is important to verify that the two factorizations yield consistent finite-resolution statistics at the numerical settings
used in this paper.

We compare absorbed EM simulations \eqref{eq:EM_update_A_rewrite} using
$L=L_{\rm ev}$ (4D Brownian driver) and a$L=L_{\rm chol}$ (2D Brownian driver), under identical model
parameters and discretization:
\[
(k,m,c)=(3.0,2.0,0.8),\qquad \rho=0.1\ (\Omega=100),\qquad
\Delta t=10^{-2},\qquad T=100,\qquad z_0=(0.8,0.6).
\]
For each factorization we generate $M=2000$ independent absorbed EM paths (with independent random seeds across the
two implementations).

We report three diagnostics that are directly tied to models:
\begin{enumerate}[label=(\roman*)]
\item Unconditional mean trajectories. Empirical averages
$t\mapsto \widehat{\mathbb{E}}[N(t)]$ and $t\mapsto \widehat{\mathbb{E}}[P(t)]$.
\item Survival curves. Empirical survival probabilities
$t\mapsto \widehat{\mathbb{P}}(\tau_{\Delta t}>t)$, where $\tau_{\Delta t}$ is the discrete absorption time defined in
Section~\ref{subsec:numerics_sde}.
\item Conditional marginal at $T$. The empirical distribution of $N(T)$ conditioned on survival
$\{\tau_{\Delta t}>T\}$.
\end{enumerate}

Figure~\ref{fig:sde_factorization_consistency} displays the results. The empirical mean trajectories under the two
factorizations are visually indistinguishable at the plotted resolution (left panel), and the survival curves track
each other closely over $[0,T]$ (middle panel). At the terminal time $T=100$, the estimated survivor fractions are
\[
\widehat{\mathbb{P}}(\tau_{\Delta t}>T)=0.244\quad\text{(event factorization)},\qquad
\widehat{\mathbb{P}}(\tau_{\Delta t}>T)=0.245\quad\text{(Cholesky factorization)},
\]
a difference of $0.41\%$ in survival probability at this finite Monte Carlo resolution. Such discrepancies are
expected under independent Monte Carlo sampling across methods and the non-smooth absorbed discretization
(clipping+freezing).
Finally, the conditional histograms of $N(T)$ given survival overlap closely (right panel), indicating no visible
factorization-induced bias in the survival-conditioned distribution at this resolution.

\begin{figure}[htbp]
    \centering
    \includegraphics[width=1.0\textwidth]{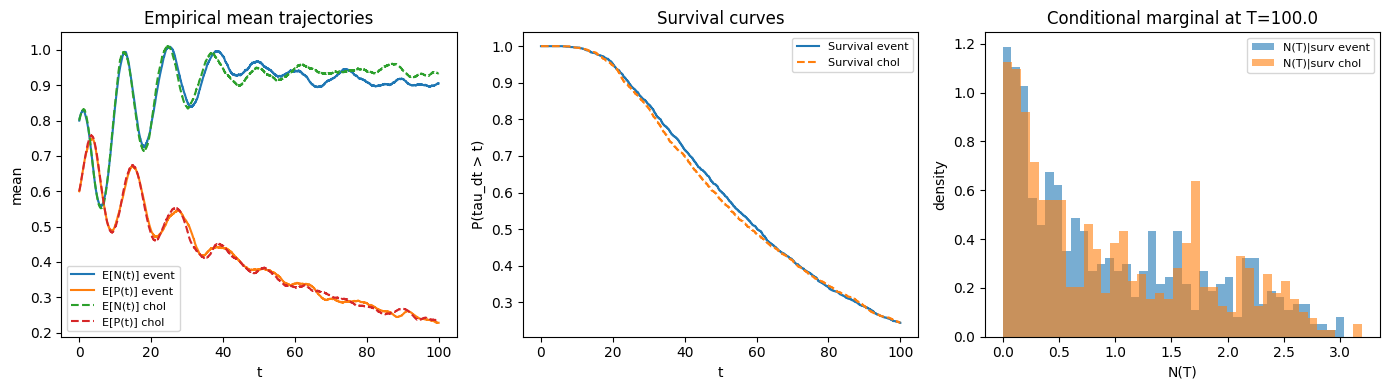}
    \caption{\textbf{Finite-sample consistency of diffusion factorizations.}
    Monte Carlo comparison of absorbed EM simulations using the event-based factorization ($L_{\rm ev}$, 4D noise) and
    the Cholesky factorization ($L_{\rm chol}$, 2D noise), both satisfying $LL^\top=\Sigma$ on $U$.
    \textbf{Left:} empirical mean trajectories $\widehat{\mathbb{E}}[N(t)]$ and $\widehat{\mathbb{E}}[P(t)]$.
    \textbf{Middle:} survival curves $t\mapsto \widehat{\mathbb{P}}(\tau_{\Delta t}>t)$.
    \textbf{Right:} empirical marginal of $N(T)$ conditioned on survival $\{\tau_{\Delta t}>T\}$.
    Parameters: $(k,m,c)=(3.0,2.0,0.8)$, $\rho=0.1$, $\Delta t=10^{-2}$, $T=100$, $z_0=(0.8,0.6)$; $M=2000$ paths per method.
    The estimated survivor fractions at $T=100$ are $0.244$ (event) and $0.245$ (Cholesky).}
    \label{fig:sde_factorization_consistency}
\end{figure}

The event factorization is mechanistically natural (one Brownian driver per reaction channel), whereas the Cholesky
factorization is computationally economical (two-dimensional driver). Figure~\ref{fig:sde_factorization_consistency}
confirms that, under the absorbed EM scheme and at the finite resolution used here, both implementations produce
consistent path statistics and survival-conditioned distributions. We treat the displayed agreement as a
finite-resolution diagnostic rather than as a claim of asymptotic strong or weak convergence rates for absorbed
schemes.

\subsection{Role of covariance: full matrix versus diagonal surrogate}
\label{subsec:numerics_covariance_role_A}

The mechanistic covariance matrix $\Sigma$ in \eqref{eq:Sigma_explicit} has a strictly negative off-diagonal entry
\[
\Sigma_{12}(N,P)=-\frac{mNP}{1+N}<0\qquad \text{for }(N,P)\in U,
\]
which is the instantaneous demographic signature of the coupled predation--conversion event
(Remark~\ref{rem:mechanistic_fingerprint_full}). To probe, at finite numerical resolution, the impact of this
cross-covariance in a representative coexistence setting, we compare the full model to a diagonal surrogate with
matched marginal variances,
\begin{equation}\label{eq:Sigma_diag_rewrite}
\Sigma_{\rm diag}(N,P):=\operatorname{diag}\bigl(\Sigma_{11}(N,P),\Sigma_{22}(N,P)\bigr).
\end{equation}
The diagonal surrogate removes instantaneous correlation ($(\Sigma_{\rm diag})_{12}\equiv 0$) while preserving the pointwise
variances.

We simulate absorbed EM paths (Section~\ref{subsec:numerics_sde}) with time step $\Delta t=10^{-2}$
over horizon $T=100$ using a two-dimensional Brownian driver.
For the full model, we use the Cholesky factor $L_{\rm chol}(N,P)$ satisfying
$L_{\rm chol}L_{\rm chol}^\top=\Sigma$.
For the diagonal surrogate, we use
\[
L_{\rm diag}(N,P):=\operatorname{diag}\bigl(\sqrt{\Sigma_{11}(N,P)},\sqrt{\Sigma_{22}(N,P)}\bigr),
\qquad
L_{\rm diag}L_{\rm diag}^\top=\Sigma_{\rm diag}.
\]
Unless stated otherwise, parameters are fixed at
\[
(k,m,c)=(3.0,2.0,0.8),\qquad \rho=0.1\ (\Omega=100),\qquad z_0=(0.8,0.6),
\]
and we generate $M=2000$ independent absorbed EM trajectories for each model.

We compare two summary diagnostics:
\begin{enumerate}[label=(\roman*)]
\item Survival curves. Monte Carlo estimates of $t\mapsto \widehat{\mathbb{P}}(\tau_{\Delta t}>t)$.
\item Survival-conditioned phase-plane cloud. A snapshot of the particle cloud at $t=100$ restricted to
survivors $\{\tau_{\Delta t}>100\}$, plotted in the $(N,P)$ phase plane.
\end{enumerate}
The goal is not to claim a universal effect of $\Sigma_{12}<0$ across all parameters and noise levels, but rather to provide a transparent ``tested-regime'' diagnostic of the mechanistic correlation structure.

Figure~\ref{fig:covariance_role} shows that, in this tested setting, the survival curves for the full model and the
diagonal surrogate are close over $[0,100]$. At the terminal time $T=100$, the estimated survivor fractions are
\[
\widehat{\mathbb{P}}(\tau_{\Delta t}>100)=0.255\quad\text{(full $\Sigma$)},\qquad
\widehat{\mathbb{P}}(\tau_{\Delta t}>100)=0.247\quad\text{(diagonal surrogate)}.
\]
The results show a $3.14\%$ relative difference in survival probability at this finite Monte Carlo resolution.
The survival-conditioned phase-plane clouds at $t=100$ overlap strongly (right panel). The overlap indicates no visible
factorization-level or correlation-induced qualitative change in the distribution of surviving states at the plotted
resolution.

We report this as an empirical observation in the tested regime. 
In other parameter regimes, for instance, closer to $\partial U$ or at different noise amplitudes $\rho$, the instantaneous correlation structure may have a more pronounced impact on rare-event paths and extinction statistics.

\begin{figure}[htbp]
    \centering
    \includegraphics[width=1.0\textwidth]{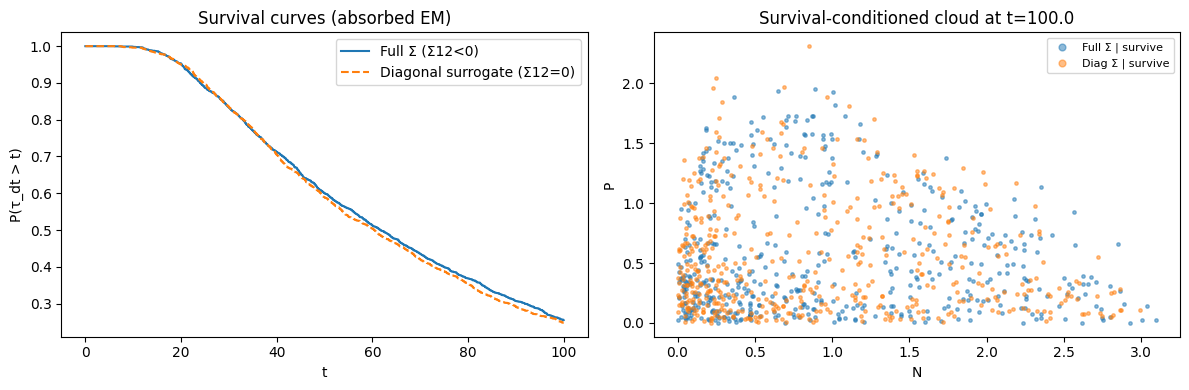}
    \caption{\textbf{Role of demographic covariance: full model vs.\ diagonal surrogate (tested regime).}
    Comparative absorbed EM simulations of the mechanistic diffusion with full covariance $\Sigma$ (blue,
    $\Sigma_{12}<0$) and a diagonal surrogate $\Sigma_{\rm diag}$ (orange, $(\Sigma_{\rm diag})_{12}=0$) with matched
    marginal variances.
    \textbf{Left:} Monte Carlo survival curves $t\mapsto \widehat{\mathbb{P}}(\tau_{\Delta t}>t)$.
    \textbf{Right:} snapshot of the survival-conditioned particle cloud at $t=100$ in the phase plane.
    Parameters: $(k,m,c)=(3.0,2.0,0.8)$, $\rho=0.1$ ($\Omega=100$), $\Delta t=10^{-2}$, $T=100$, $z_0=(0.8,0.6)$;
    $M=2000$ paths per model. The estimated survivor fractions at $T=100$ are $0.255$ (full) and $0.247$ (diagonal).}
    \label{fig:covariance_role}
\end{figure}

The comparison in Figure~\ref{fig:covariance_role} is intended as a finite-resolution diagnostic illustrating how
the mechanistic negative cross-covariance can be probed numerically. It is not a general statement that
correlation is negligible. Rather, it indicates that in the tested regime above, the dominant survival statistics on
the simulated horizon are not strongly sensitive to setting $\Sigma_{12}=0$. A systematic exploration across
$(k,m,c,\rho)$ is beyond the scope of this work.

\section{Discussion}

This work develops a fully mechanistic stochastic Rosenzweig–MacArthur predator–prey model that respects the intrinsic irreversibility of demographic extinction. Rather than postulating an SDE with exogenous diagonal noise, we start from an event-based CTMC on $\mathbb{N}_0^2$ with four elementary channels (prey birth, prey competition death, predator death, and coupled predation–conversion). Absorbing coordinate axes arise naturally at the jump-process level and are preserved in the diffusion approximation by freezing trajectories upon first hitting $\partial U=\{N=0\}\cup\{P=0\}$. 
Under Kurtz density-dependent scaling, the LLN limit recovers the classical Rosenzweig–MacArthur ODE. The CLT/chemical-Langevin limit yields an It\^o diffusion with explicit drift and a full instantaneous covariance matrix.

A central message is that mechanistic event coupling is not a cosmetic modeling choice. The predation–conversion increment $(-1,1)$ necessarily induces a strictly negative cross-covariance
\[
\Sigma_{12}(N,P)=-\frac{mNP}{1+N}<0\quad \text{on }(0,\infty)^2.
\]
The negative cross-covariance is a structural “fingerprint” that cannot be reproduced by diagonal independent-noise surrogates with the same marginal variances. This feature follows directly from the reaction-channel covariance formula $\Sigma(z)=\sum_e \lambda_e(z)\Delta_e\Delta_e^\top$. The feature clarifies when independence assumptions at the SDE level become incompatible with the underlying demographic mechanism.

On the analytical side, we establish strong well-posedness up to absorption, non-explosion, and polynomial moment bounds for the full-covariance diffusion. These results ensure that the absorbed model is mathematically coherent in the ecologically relevant domain and justify interpreting boundary hits as extinction events. Qualitatively, extinction occurs with strictly positive probability from every interior initial condition for all parameter values. In the subcritical regime $m\le c$, predator extinction occurs almost surely. The latter provides a sharp stochastic analogue of deterministic persistence thresholds. It also highlights how demographic noise, combined with boundary degeneracy, enforces eventual absorption even when interior dynamics may appear stable over finite horizons.

From a computational perspective, we record two complementary Brownian factorizations of the diffusion. The first is an event-based factorization $L_{\rm ev}\in\mathbb{R}^{2\times 4}$ that preserves reaction-channel structure, and the second is a two-dimensional Cholesky factorization that is often more economical in simulation. Our absorbed Euler–Maruyama implementation, together with finite-sample diagnostics, confirms practical consistency between these factorizations and offers a transparent workflow for model checking. We also compare the mechanistic diffusion to a diagonal surrogate with matched pointwise variances. The comparison illustrates how the role of $\Sigma_{12}$ can be probed numerically without conflating correlation effects with variance changes. 
In the tested coexistence regime, survival statistics were similar across models. The similarity suggests that covariance effects may be most pronounced in other regimes rather than in typical interior fluctuations.

Several extensions follow naturally. 
First, systematic exploration of how $\Sigma_{12}<0$ alters hitting-time distributions, extinction probabilities would sharpen when mechanistic covariance is essential for risk assessment. Second, developing numerical schemes with improved boundary handling (beyond clipping+freezing) and establishing convergence properties for absorbed diffusion would strengthen the computational foundation. 
Third, the framework readily generalizes to additional mechanistic features, including alternative functional responses, prey refuges, and multi-species networks. Among these features, event coupling may induce richer covariance structures. 
Finally, the model can be connected to data via likelihood-free inference. This approach would enable empirical assessment of whether mechanistic cross-covariance improves fit or predictive performance relative to diagonal-noise approximations.

In conclusion, our results provide a principled pipeline from demographic events to an absorbed full-covariance diffusion. Our model incorporates explicit drift and covariance, rigorous well-posedness up to extinction, and robust extinction statements.
We expose $\Sigma_{12}<0$ as an unavoidable mechanistic consequence of predation–conversion events. Therefore, the paper clarifies the structural limitations of ad hoc diagonal-noise SDEs and supplies a reproducible modeling and simulation blueprint for stochastic extinction analysis in consumer–resource systems.

\begin{appendices}

\section{Non-explosion and moment bounds}\label{app:higher_moments}

This appendix proves Theorem~\ref{thm:nonexplosion_second_moment}.  The argument is a standard Lyapunov--localization
estimate for It\^o diffusion with locally Lipschitz coefficients and at most polynomial growth.  We state all steps
explicitly for completeness.

\begin{proof}
Set the quadratic Lyapunov function $V_q(z):=|z|^q$ for $q=2$ or $q\ge 4$.
For $R\ge 1$ define the bounded stopping time $\theta_R:=\tau\wedge\sigma_R$.

A direct differentiation gives
\begin{equation*}
    \nabla V_q(z) = q \lvert z\rvert^{q-2}x,\quad \nabla^2V_q(z) = q\lvert z\rvert^{q-2}I + q(q-2)\lvert z\rvert^{q-4}zz^\top.
\end{equation*}
Applying It\^o's formula to $V_q(Z(t\wedge \tau))$ gives
\begin{equation*}
    (\mathcal{L}V)(z) = q \lvert z\rvert^{q-2}\langle z,\mu(z)\rangle + \frac{q\rho^2}{2}\lvert z\rvert^{q-2}\operatorname{tr} \Sigma(z) + \frac{q(q-2)\rho^2}{2}\lvert z\rvert^{q-4}z^\top \Sigma(z)z.
\end{equation*}
The positive semidefiniteness of $\Sigma$ yields $z^\top \Sigma(z)z \le \lvert z\rvert^2 \operatorname{tr}\Sigma(z)$. 
Therefore, we have
\begin{equation*}
(\mathcal{L}V_q)(z) \le q\lvert z\rvert^{q-2}\langle z,\mu(z)\rangle + \frac{q(q-1)\rho^2}{2}\lvert z\rvert^{q-2}\operatorname{tr}\Sigma(z).
\end{equation*}
Using the explicit drift $\mu$ and covariance $\Sigma$ from Section~\ref{sec:mu_sigma_factorization}
(and the elementary estimate $NP\le \frac12(N^2+P^2)$), one obtains a constant $C_0<\infty$ such that for all $z\in U$,
\begin{equation}\label{eq:linear_growth_bound}
    \langle z, \mu(z)\rangle \le C_0 (1+\lvert z\rvert^2),\quad \operatorname{tr}\Sigma(z) \le C_0(1+\lvert z\rvert^2).
\end{equation}
Applying \eqref{eq:linear_growth_bound} and the elementary inequality $\lvert z\rvert^{q-2}\le 1+\lvert z\rvert^q$ gives
\begin{equation}\label{eq:LV_bound}
(\mathcal{L}V_q)(z) \le C_q(1+\lvert z\rvert^q)
\end{equation}
for a constant depending on $q$.

By It\^{o}'s formula applied to $V(Z(t\wedge \theta_R))$ and taking expectations,
the local martingale term vanishes and we get, for $t\in[0,T]$,
\[
\mathbb{E}\bigl[V(Z(t\wedge \theta_R))\bigr]
=
V(z_0)+\mathbb{E}\!\left[\int_0^{t\wedge \theta_R} (\mathcal{L}V)(Z(s))\,ds\right]
\le
V(z_0)+C_q\int_0^t \Big( 1 +   \mathbb{E}\bigl[V(Z(s\wedge \theta_R))\bigr] \Big)\,ds,
\]
where we used \eqref{eq:LV_bound}.
Gronwall's inequality yields the following bound that is uniform in $R$:
\begin{equation}\label{eq:V_stopped_bound_A}
\mathbb{E}\bigl[V(Z(t\wedge \theta_R))\bigr]
\le
\big(V(z_0) + C_q t \big)\,e^{C_qt}, \quad t\in [0,T].
\end{equation}

On the event $\{\sigma_R\le T,\ \sigma_R<\tau\}$ one has $|Z(\theta_R)|=|Z(\sigma_R)|\ge R$ and thus
$V(Z(\theta_R))\ge R^2$. Therefore, by \eqref{eq:V_stopped_bound_A},
\[
R^2\,\mathbb{P}(\sigma_R\le T,\ \sigma_R<\tau)
\le
\mathbb{E}\bigl[V(Z(T\wedge \theta_R))\bigr]
\le
(V(z_0)+C_qT)\,e^{C_qT}.
\]
Letting $R\to\infty$ gives $\mathbb{P}(\zeta\le T,\ \zeta<\tau)=0$. Since $T>0$ is arbitrary,
$\mathbb{P}(\zeta<\infty,\ \zeta<\tau)=0$, i.e.\ there is no blow-up prior to boundary hitting.
This proves (i).

For each $m\in \mathbb{N}_+$, consider the stopping time $\tau_m$ of $Z$ that exits $U_m = \bigg(\dfrac{1}{m}, m\bigg) \times \bigg( \dfrac{1}{m}, m\bigg)$.
Reapplying It\^o's formula to $V(Z(t\wedge \tau_m))$ yields 
\begin{equation}\label{eq:V_bound}
    \mathbb{E}\left[V(Z(t\wedge \tau_m))\right] \le \big( V(z_0) + C_qt\big) e^{c_qt}.
\end{equation}
Fix $q\ge 2$. Choose arbitrary $r>q$. \eqref{eq:V_bound} implies that for fixed $t$,
\begin{equation*}
    \sup_{m\in \mathbb{N}_+} \mathbb{E}\left[\lvert Z(t\wedge \tau_m)\rvert ^r \right] \le \infty.
\end{equation*}
Therefore, $\{\lvert Z(t\wedge \tau_m)\rvert^q)\}_{m\in \mathbb{N}_+}$ is uniformly integrable. 
Since $\tau_m \to \tau$ and paths are continuous up to $\tau$, we have $\lvert Z(t\wedge \tau_m)\rvert^q \to \lvert Z(t\wedge \tau)\rvert^q$ almost surely. Vitali's theorem yields
\begin{equation*}
    \mathbb{E}\left[\lvert Z(t\wedge \tau_m)\rvert^q \right] \underset{m\to\infty}{\longrightarrow}
    \mathbb{E}\left[\lvert Z(t\wedge \tau)\rvert^q \right].
\end{equation*}
Taking limit in $m\to \infty$ in \eqref{eq:V_bound} gives
\begin{equation}\label{eq:EV_bound}
    \mathbb{E}\left[V(Z(t\wedge \tau))\right] \le \big( V(z_0) + C_qt\big) e^{c_qt}.
\end{equation}
Taking supermum in $t\in [0,T]$ in \eqref{eq:EV_bound} gives
\begin{equation*}
\sup_{0\le t\le T} \mathbb{E}\left[V(Z(t\wedge \tau))\right] \le 
C_{p,T}\big(1+ V(z_0) \big) = C_{p,T}\big(1+ \lvert z_0\rvert^q \big)
\end{equation*}
for some $C_{p,T}>0$.
\end{proof}

\section{Proof of Theorem~\ref{thm:positive_extinction_probability}}
\label{app:positive_extinction_probability}

In this appendix we give a complete proof of Theorem~\ref{thm:positive_extinction_probability}:
for every $z\in U=(0,\infty)^2$,
\[
\mathbb P_z(\tau<\infty)>0,
\qquad
\tau:=\inf\{t>0:\,N(t)=0\ \text{or}\ P(t)=0\}.
\]

\subsection{Positive density on a compact set away from the boundary}

We work with the interior strong solution $Z(t)=(N(t),P(t))$ of
\begin{equation}\label{eq:B_SDE}
dZ(t)=\mu(Z(t))\,dt+\rho\,L_{\rm ev}(Z(t))\,dW(t),\qquad Z(0)=z\in U,
\end{equation}
where $\mu$ and $\Sigma=L_{\rm ev}L_{\rm ev}^\top$ are given by \eqref{eq:mu_explicit}--\eqref{eq:Sigma_explicit}.
Recall in particular
\begin{equation}\label{eq:B_predator_coeffs}
\mu_2(N,P)=\Bigl(\frac{mN}{1+N}-c\Bigr)P,
\qquad
\Sigma_{22}(N,P)=\Bigl(c+\frac{mN}{1+N}\Bigr)P,
\qquad (N,P)\in U.
\end{equation}

Fix $z=(N_0,P_0)\in U$ and define
\begin{equation}\label{eq:B_choose_compact}
a:=\frac12\min\{N_0,P_0,1\},\qquad
b:=2\max\{N_0,P_0,1\}.
\end{equation}
For $0<a<b$ and $\varepsilon>0$ define
\[
K_{a,b}:=[a,b]\times[a,b]\Subset U,
\qquad
G_{a,b,\varepsilon}:=[a,b]\times(0,\varepsilon)\subset U.
\]
Then $0<a<b<\infty$ and $z\in K_{a,b}$.

\begin{lemma}[Staying inside $K_{a,b}$ for a short time]\label{lem:B_stay_in_K}
There exists $t_0=t_0(z)>0$ such that
\[
\mathbb P_z\bigl(Z(t)\in K_{a,b}\ \text{for all}\ t\in[0,t_0]\bigr)>0.
\]
\end{lemma}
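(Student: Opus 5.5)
The plan is to use path continuity together with a Chebyshev/maximal-inequality bound on the displacement over a short window. By the choice of $a,b$ in \eqref{eq:B_choose_compact}, the starting point $z=(N_0,P_0)$ satisfies $N_0,P_0\ge 2a$ and $N_0,P_0\le b/2$. Hence $z$ lies at Euclidean distance at least
\[
\delta:=\min\{N_0-a,\;P_0-a,\;b-N_0,\;b-P_0\}\ \ge\ a>0
\]
from $\partial K_{a,b}$. It therefore suffices to show that $\mathbb P_z\bigl(\sup_{0\le t\le t_0\wedge\theta}|Z(t)-z|<\delta\bigr)>0$ for some $t_0>0$. Here $\theta:=\inf\{t\ge0:\,Z(t)\notin K_{a,b}\}$, which satisfies $\theta<\tau$ because $K_{a,b}\Subset U$.

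The first step is to localize. Up to $\theta$ the coefficients $\mu$ and $\rho L_{\rm ev}$ are bounded by constants $M_1:=\sup_{K_{a,b}}|\mu|$ and $M_2:=\sup_{K_{a,b}}\rho^2\,\mathrm{tr}\,\Sigma$. These are finite by the explicit formulas \eqref{eq:mu_explicit}--\eqref{eq:Sigma_explicit} and continuity on the compact set. Writing
\[
Z(t\wedge\theta)-z=\int_0^{t\wedge\theta}\mu(Z)\,ds+\rho\int_0^{t\wedge\theta}L_{\rm ev}(Z)\,dW=:A(t)+M(t),
\]
the drift part satisfies $|A(t)|\le M_1 t$ pathwise. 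The stochastic integral $M$ is a true $L^2$ martingale, since its integrand is bounded on $[0,\theta]$, and $\mathbb E|M(t_0)|^2\le M_2 t_0$ by the Itô isometry.

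The second step is to choose $t_0$. Take $t_0>0$ small enough that $M_1t_0<\delta/2$ and $16M_2t_0/\delta^2<1/2$. Doob's $L^2$ maximal inequality then gives
\[
\mathbb P\Bigl(\sup_{t\le t_0}|M(t)|\ge \delta/2\Bigr)\le \frac{4\,\mathbb E|M(t_0)|^2}{(\delta/2)^2}\le\frac{16M_2t_0}{\delta^2}<\frac12 .
\]
On the complementary event $E$ we have $\sup_{t\le t_0}|Z(t\wedge\theta)-z|<\delta$. Since $Z$ is continuous, this forces $\theta>t_0$: otherwise $Z(\theta)\in\partial K_{a,b}$ would lie at distance $\ge\delta$ from $z$. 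So on $E$ the path stays in $K_{a,b}$ on $[0,t_0]$, and $\mathbb P_z(E)>1/2>0$. In fact this argument yields probability arbitrarily close to one as $t_0\downarrow0$.

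I expect no serious obstacle. The only point needing care is that the stopped integrand is bounded, which justifies the Itô isometry and Doob's inequality for the local martingale. This follows because $\Sigma=L_{\rm ev}L_{\rm ev}^\top$ (Lemma~\ref{lem:event_factorization_A}) is continuous on $\overline U$, so its trace is bounded on $K_{a,b}$. Well-posedness up to $\tau\ge\theta$ comes from Theorem~\ref{thm:strong_wellposedness_tau}.
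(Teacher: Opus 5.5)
Your proof is correct, but it takes a more quantitative route than the paper's.

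The paper's proof is purely qualitative. Since $z$ lies in the interior of $K_{a,b}$ and paths are continuous, the exit time $\eta$ from $K_{a,b}$ is strictly positive almost surely. The events $\{\eta>t_0\}$ increase to $\{\eta>0\}$ as $t_0\downarrow0$, so $\mathbb P_z(\eta>t_0)\uparrow 1$ and some $t_0$ works.

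You instead decompose the increment stopped at $\theta$ into two parts. The drift part is bounded pathwise by $M_1t$, and the martingale part is controlled by the It\^o isometry and Doob's $L^2$ inequality. The bound $\mathrm{dist}(z,\partial K_{a,b})\ge\delta$ then forces $\theta>t_0$ on the good event.

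This costs more work, since you need localization and bounded coefficients on $K_{a,b}$. In return you get an explicit horizon: any $t_0<\min\{\delta/(2M_1),\,\delta^2/(32M_2)\}$ works. You also get a lower bound of $1/2$ that holds uniformly over all starting points in $K_{a,b}$ at distance at least $\delta$ from $\partial K_{a,b}$. The paper's argument gives neither. For the later strong Markov step at time $t_1$ only a fixed $z$ is needed, so the two-line argument suffices there.

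One small imprecision: $\theta<\tau$ should read $\theta\le\tau$, with strict inequality on $\{\tau<\infty\}$, because both times can be infinite. This does not affect your argument.
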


\begin{proof}
Since $z\in\mathrm{int}(K_{a,b})$ and sample paths are continuous, the exit time
$\eta:=\inf\{t\ge0:\,Z(t)\notin K_{a,b}\}$ satisfies $\mathbb P_z(\eta>0)=1$.
Hence $\mathbb P_z(\eta>t_0)>0$ for some $t_0>0$.
\end{proof}

On $K_{a,b}$ the diffusion matrix is uniformly elliptic and smooth.

\begin{lemma}[Uniform ellipticity on $K_{a,b}$]\label{lem:B_uniform_elliptic}
There exists $\lambda_{a,b}>0$ such that for all $(N,P)\in K_{a,b}$ and all $\xi\in\mathbb R^2$,
\[
\xi^\top \Sigma(N,P)\,\xi \ \ge\ \lambda_{a,b}|\xi|^2.
\]
\end{lemma}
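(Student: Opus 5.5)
The plan is to combine the pointwise positive definiteness from Lemma~\ref{lem:Sigma_posdef_A} with continuity of $\Sigma$ and compactness of $K_{a,b}$. First, note that every entry of $\Sigma$ in \eqref{eq:Sigma_explicit} is a continuous function on $\overline U$: the entries are built from $N$, $N^2/k$, $cP$ and $mNP/(1+N)$, and $1+N\ge 1$. Consequently the smallest eigenvalue
\[
\lambda_{\min}(N,P)=\min_{|\xi|=1}\xi^\top\Sigma(N,P)\xi
\]
is continuous in $(N,P)$. This holds because it is a minimum over the compact unit sphere of a function that is jointly continuous in $(N,P,\xi)$. Alternatively, one can use the explicit formula $\lambda_{\min}=\tfrac12\bigl(\operatorname{tr}\Sigma-\sqrt{(\operatorname{tr}\Sigma)^2-4\det\Sigma}\bigr)$.

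Second, since $K_{a,b}=[a,b]^2\subset U$ with $a>0$, Lemma~\ref{lem:Sigma_posdef_A} gives $\lambda_{\min}>0$ at every point of $K_{a,b}$. By compactness, the continuous function $\lambda_{\min}$ attains its minimum on $K_{a,b}$, and that minimum is positive. Set $\lambda_{a,b}$ equal to this minimum. Homogeneity in $\xi$ then gives $\xi^\top\Sigma\xi\ge\lambda_{a,b}|\xi|^2$ for all $\xi\in\mathbb R^2$.

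If an explicit constant is preferred, I would use the bound $\lambda_{\min}\ge \det\Sigma/\operatorname{tr}\Sigma$. This follows from $\lambda_{\min}\lambda_{\max}=\det\Sigma$ and $\lambda_{\max}\le\operatorname{tr}\Sigma$. By the determinant expansion in the proof of Lemma~\ref{lem:Sigma_posdef_A}, $\det\Sigma\ge (N+N^2/k)\,cP\ge c\,a^2$ on $K_{a,b}$. Also $\operatorname{tr}\Sigma\le b+b^2/k+cb+2mb^2$ there, since $mNP/(1+N)\le mb^2$. Hence one may take
\[
\lambda_{a,b}=\frac{c\,a^2}{b+b^2/k+cb+2mb^2}.
\]

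There is no real obstacle in this argument. The only point needing care is justifying that positivity is uniform rather than merely pointwise, and either the compactness argument or the explicit $\det/\operatorname{tr}$ bound settles this.
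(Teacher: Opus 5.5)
Your argument is correct and matches the paper's proof. Both combine pointwise positive definiteness from Lemma~\ref{lem:Sigma_posdef_A}, continuity of $\Sigma$ (hence of its smallest eigenvalue), and compactness of $K_{a,b}\Subset U$ to obtain a strictly positive minimum. Your explicit constant $\lambda_{a,b}=c\,a^2/(b+b^2/k+cb+2mb^2)$, obtained from $\lambda_{\min}\ge\det\Sigma/\operatorname{tr}\Sigma$, is also valid and is a small addition the paper does not record.
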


\begin{proof}
By Lemma~\ref{lem:Sigma_posdef_A}, $\Sigma$ is positive definite on $U$. By continuity of $\Sigma$ and compactness of
$K_{a,b}\Subset U$, the smallest eigenvalue of $\Sigma$ attains a strictly positive minimum on $K_{a,b}$.
\end{proof}

We use a standard heat-kernel lower bound on compacts. We state it at the level we need.

\begin{lemma}[Small-ball lower bound for the killed transition kernel]\label{lem:B_Aronson_ball}
Fix $t>0$. For each $x\in U$, the killed kernel
\[
P_t(x,dy):=\mathbb P_x(Z(t)\in dy,\ t<\tau)
\]
is absolutely continuous with respect to Lebesgue measure on $U$; we denote its Radon--Nikodym density by
$p_t(x,\cdot)$, i.e.
\[
P_t(x,A)=\int_A p_t(x,y)\,dy,\qquad A\subset U\ \text{Borel}.
\]
Moreover, for each compact $K\Subset U$ there exist constants $c_K(t)>0$ and $r_K(t)>0$ such that
\begin{equation}\label{eq:B_ball_lower_rewrite}
P_t(x,B(y,r))\ \ge\ c_K(t)\,|B(y,r)|,
\qquad \forall\,x,y\in K,\ \forall\,r\in(0,r_K(t)].
\end{equation}
\end{lemma}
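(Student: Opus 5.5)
The plan is to reduce both assertions to classical estimates for uniformly elliptic diffusions with smooth coefficients, applied on a slightly enlarged compact subdomain of $U$.

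\textbf{Step 1: localize.} Fix the compact $K\Subset U$ and choose a bounded open set $D$ with smooth boundary such that $K\Subset D\Subset U$. On $\overline D$ the drift $\mu$ and the covariance $\Sigma$ are $C^\infty$, since all the rates $N$, $N^2/k$, $cP$ and $mNP/(1+N)$ are smooth on $U$. Moreover, $\rho^2\Sigma$ is uniformly elliptic on $\overline D$, by the same argument as in Lemma~\ref{lem:B_uniform_elliptic}.

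\textbf{Step 2: absolute continuity.} For a Borel set $A\subset U$ with $|A|=0$, I will show $P_t(x,A)=0$. Exhaust $U$ by the sets $U_j=(1/j,j)^2$ and set $\tau_j$ equal to the exit time from $U_j$. Then $\{t<\tau\}=\bigcup_j\{t<\tau_j\}$, so it suffices to show $\mathbb P_x(Z(t)\in A,\ t<\tau_j)=0$ for each $j$. On $U_j$ the coefficients can be extended to globally smooth, bounded, uniformly elliptic coefficients on $\mathbb R^2$. The extended diffusion has a transition density (Aronson, or Friedman's parametrix). By pathwise uniqueness (Theorem~\ref{thm:strong_wellposedness_tau}), it agrees with $Z$ up to $\tau_j$, so the killed law is dominated by an absolutely continuous measure. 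Letting $j\to\infty$ then gives the density $p_t(x,\cdot)$.

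\textbf{Step 3: lower bound.} For the lower bound I work with the process killed on exiting $D$. Its density $p^D_t(x,y)\le p_t(x,y)$ is the Dirichlet heat kernel of $\mathcal L$ on $D$. This kernel is strictly positive and jointly continuous on $(0,\infty)\times D\times D$, by the parabolic strong maximum principle (equivalently, the Harnack inequality) together with the smoothness of the coefficients. Hence it attains a positive minimum $2c_K(t)$ on the compact set $K\times K_\delta$, where $K_\delta:=\{y:\mathrm{dist}(y,K)\le\delta\}\subset D$. Take $r_K(t):=\delta$. For $x,y\in K$ and $r\le\delta$ this gives $P_t(x,B(y,r))\ge\int_{B(y,r)}p^D_t(x,w)\,dw\ge c_K(t)|B(y,r)|$.

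A more self-contained route to positivity would be the Stroock--Varadhan support theorem. Every $x\in K$ can be joined to every $y\in K$ by a smooth path in $D$ of duration $t$, and the control system $\dot\phi=\mu(\phi)+\rho L_{\rm ev}(\phi)u$ is fully actuated by ellipticity. This gives $P^D_t(x,B(y,r))>0$, and continuity in $(x,y)$ (the Feller property) plus compactness would make the bound uniform. I would use this only as a backup.

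\textbf{Main obstacle.} The main obstacle is Step 3: obtaining a lower bound that is uniform in $x,y\in K$, not merely pointwise positive. I would settle it via joint continuity and strict positivity of the Dirichlet heat kernel on a smooth domain, citing Friedman's parabolic theory. The delicate point is ensuring the constant is independent of $r$. This holds because the bound comes from a pointwise density minimum over the $\delta$-neighbourhood $K_\delta$, rather than from a ball-by-ball argument.
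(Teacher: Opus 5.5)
Your proposal is correct and follows essentially the same route as the paper: localize to a smooth domain $D$ with $K\Subset D\Subset U$, use strict positivity and joint continuity of the Dirichlet heat kernel $p^D_t$, and bound $P_t\ge P^D_t$ from below by the minimum of $p^D_t$ over a compact set, with absolute continuity obtained by exhausting $U$ with killed processes. Your version is in fact slightly more careful than the paper's in two places: exhausting by $(1/j,j)^2\Subset U$ (instead of $U\cap B(0,R)$, which touches the degenerate axes) is what actually justifies the appeal to uniform ellipticity, and taking the minimum over $K\times K_\delta$ rather than $K\times K$ is needed because the ball $B(y,r)$ you integrate over need not lie in $K$.
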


\begin{proof}
We show absolute continuity of $P_t(x,\cdot)$.
Fix $x\in U$ and $t>0$. For $R\ge1$ define the bounded domain
\[
D_R:=U\cap B(0,R),
\qquad
\tau_R:=\inf\{s\ge0:\ Z(s)\notin D_R\}.
\]
Consider the killed-in-$D_R$ kernel
\[
P_t^{(R)}(x,A):=\mathbb P_x\bigl(Z(t)\in A,\ t<\tau_R\bigr),\qquad A\subset D_R\ \text{Borel}.
\]
$D_R$ is bounded and the coefficients are smooth and uniformly elliptic on $\overline{D_R}$ away from the axes. Therefore, by standard parabolic theory for uniformly elliptic operators with Dirichlet boundary, $P_t^{(R)}(x,\cdot)$
admits a (Dirichlet) transition density $p_t^{(R)}(x,\cdot)$ with respect to Lebesgue measure on $D_R$:
\[
P_t^{(R)}(x,A)=\int_{A} p_t^{(R)}(x,y)\,dy,\qquad A\subset D_R.
\]
In particular, $P_t^{(R)}(x,\cdot)\ll dy$.

Moreover, for any Borel set $A\subset U$,
\[
P_t^{(R)}(x,A)=\mathbb P_x\bigl(Z(t)\in A,\ t<\tau_R\bigr)
\uparrow
\mathbb P_x\bigl(Z(t)\in A,\ t<\tau\bigr)
=:P_t(x,A),
\qquad R\to\infty,
\]
since $\tau_R\uparrow\tau$ almost surely (continuity of paths and $D_R\uparrow U$).
Therefore, if $A$ is Lebesgue-null, then for every $R$,
$P_t^{(R)}(x,A)=0$, hence $P_t(x,A)=\lim_{R\to\infty}P_t^{(R)}(x,A)=0$.
This shows $P_t(x,\cdot)\ll dy$. By the Radon--Nikodym theorem there exists a density $p_t(x,\cdot)$ such that
\[
P_t(x,A)=\int_A p_t(x,y)\,dy,\qquad A\subset U.
\]

We establish the small-ball lower bound on a compact $K\Subset U$.
Fix a compact set $K\Subset U$. Choose a bounded $C^2$ domain $D$ such that
\[
K\Subset D\Subset U,
\qquad
\mathrm{dist}(D,\partial U)>0.
\]
Let $\tau_D:=\inf\{s\ge0:\ Z(s)\notin D\}$ and define the killed-in-$D$ kernel
\[
P_t^{D}(x,A):=\mathbb P_x\bigl(Z(t)\in A,\ t<\tau_D\bigr),\qquad A\subset D.
\]
Since $D\Subset U$, one has $\{t<\tau_D\}\subset\{t<\tau\}$ and therefore
\begin{equation}\label{eq:Pt_dominates_PtD}
P_t(x,A)\ \ge\ P_t^D(x,A),\qquad x\in U,\ A\subset D.
\end{equation}

On $\overline D$ the diffusion matrix is uniformly elliptic and the coefficients are smooth. Hence, the killed kernel
$P_t^D$ admits a Dirichlet heat-kernel density $p_t^D(x,y)$ on $D\times D$ which is continuous and strictly positive
for each fixed $t>0$. In particular, the continuous function $(x,y)\mapsto p_t^D(x,y)$ attains a strictly positive
minimum on the compact set $K\times K$:
\[
m_K(t):=\inf_{x\in K}\inf_{y\in K} p_t^D(x,y)\ >\ 0.
\]
Let
\[
r_K(t):=\tfrac12\,\mathrm{dist}(K,\partial D)\ >\ 0.
\]
Then for every $y\in K$ and $r\in(0,r_K(t)]$ we have $B(y,r)\subset D$.
Therefore, for $x,y\in K$ and $r\in(0,r_K(t)]$, using \eqref{eq:Pt_dominates_PtD} and integrating the heat kernel,
\[
P_t(x,B(y,r))
\ge P_t^D(x,B(y,r))
= \int_{B(y,r)} p_t^D(x,z)\,dz
\ge \int_{B(y,r)} m_K(t)\,dz
= m_K(t)\,|B(y,r)|.
\]
Thus \eqref{eq:B_ball_lower_rewrite} holds with $c_K(t):=m_K(t)$.
\end{proof}
Choose a point $y_*=(N_*,P_*)$ with $N_*=a$ and $P_*=\varepsilon_*/2$. Take $r>0$ small so that
\[
B(y_*,r)\subset [a/2,2a]\times(0,\varepsilon_*).
\]
Pick a compact $K'\Subset U$ such that
\[
K_{a,b} \cup \overline{B(y_*,r)} \subset K'.
\]
Lemma~\ref{lem:B_Aronson_ball} yields the following bound:
\[
\inf_{x\in K_{a,b}}\mathbb P_x\bigl(Z(t_2)\in B(y_*,r),\ t_2<\tau\bigr)\ge c_{K'}(t_2)\lvert (y_*,r)\rvert>0.
\]
Using the strong Markov property at time $t_1$,
\[
\mathbb P_z\bigl(Z(t_1+t_2)\in B(y_*,r),\ t_1+t_2<\tau\bigr)
\ge \mathbb P_z\bigl(t_1<\tau,\ Z(t_1)\in K_{a,b}\bigr)\cdot c>0.
\]
Since $B(y_*,r)\subset [a/2,2a]\times(0,\varepsilon_*)$ we conclude
\begin{equation}\label{eq:B_enter_G}
\mathbb P_z\bigl(Z(t_1+t_2)\in [a/2,2a]\times(0,\varepsilon_*),\ t_1+t_2<\tau\bigr)>0.
\end{equation}
Define the thin set
\[
G:=G_{a/2,\,2a,\,\varepsilon_*}=[a/2,2a]\times(0,\varepsilon_*).
\]
Equation \eqref{eq:B_enter_G} says: starting from $z$, the process enters $G$ at time $T:=t_1+t_2$ with positive
probability while still alive.

\subsection{One-dimensional comparison inside \texorpdfstring{$G$}{G}}

Fix the thin set $G=[a/2,2a]\times(0,\varepsilon_*)$ and define the exit time
\[
\sigma:=\inf\{t\ge 0:\, Z(t)\notin [a/4,4a]\times(0,2\varepsilon_*)\}.
\]
On $[0,\sigma)$ we have $N(t)\in[a/4,4a]$ and $P(t)\in(0,2\varepsilon_*)$.

On this region we bound the predator drift and diffusion uniformly.

\begin{lemma}[Uniform bounds on the predator coefficients on the strip]\label{lem:B_pred_bounds}
There exist constants $\kappa\in\mathbb R$ and $\alpha>0$, depending only on $(a,m,c)$, such that for all
$(N,P)\in [a/4,4a]\times(0,2\varepsilon_*)$,
\begin{equation}\label{eq:B_drift_bound}
\mu_2(N,P)\le \kappa P,
\end{equation}
and
\begin{equation}\label{eq:B_var_bound}
\Sigma_{22}(N,P)\ge \alpha P.
\end{equation}
\end{lemma}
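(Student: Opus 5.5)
The statement is the elementary pointwise estimate in Lemma~\ref{lem:B_pred_bounds}. I will prove it directly from the explicit coefficients in \eqref{eq:B_predator_coeffs}. No probabilistic input is needed. The plan is to bound the Holling factor $h(N):=\frac{mN}{1+N}$ from above and below on the prey window $[a/4,4a]$, and then multiply by $P>0$.

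For the drift, I note that $\mu_2(N,P)=(h(N)-c)P$. The map $h$ is increasing on $[0,\infty)$ with $0\le h<m$. I can therefore take either
\[
\kappa:=h(4a)-c=\frac{4am}{1+4a}-c,
\]
or the cruder constant $\kappa:=m-c$. In both cases $h(N)-c\le\kappa$ for every $N\in[a/4,4a]$. Multiplying by $P>0$ gives \eqref{eq:B_drift_bound}. The constant depends only on $(a,m,c)$, and it may have either sign, which is why the statement only asks that $\kappa\in\mathbb R$.

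For the variance, I note that $\Sigma_{22}(N,P)=(c+h(N))P$. Since $h(N)\ge h(a/4)>0$ on the window, I take
\[
\alpha:=c+\frac{ma/4}{1+a/4}>0,
\]
or simply $\alpha:=c$. Then $\Sigma_{22}(N,P)\ge\alpha P$ for all $P>0$, which is \eqref{eq:B_var_bound}.

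Neither estimate uses the upper bound $P<2\varepsilon_*$. The only point that needs care is that the constants are uniform in $\varepsilon_*$ and in $P$. This holds because both constants depend on $N$ alone, through monotonicity of $h$ on a compact interval. I do not expect a genuine obstacle here. The step that matters for the later use is keeping $\alpha>0$, because this square-root nondegeneracy, $\Sigma_{22}\gtrsim P$, is what drives the subsequent one-dimensional CIR-type comparison that yields hitting of $\{P=0\}$ with positive probability.
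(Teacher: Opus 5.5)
Your proposal is correct and takes the same approach as the paper: it uses that $N\mapsto \frac{mN}{1+N}$ is increasing to set $\kappa:=\frac{m(4a)}{1+4a}-c$ and $\alpha:=c+\frac{m(a/4)}{1+a/4}$, which are exactly your primary choices. Your further observations are also valid: the cruder constants $\kappa=m-c$ and $\alpha=c$ work as well, and the restriction $P<2\varepsilon_*$ is never used.
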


\begin{proof}
Since $N\mapsto \frac{mN}{1+N}$ is increasing,
\[
\frac{mN}{1+N}-c \ \le\ \frac{m(4a)}{1+4a}-c=: \kappa,
\]
which gives \eqref{eq:B_drift_bound}. Also,
\[
c+\frac{mN}{1+N}\ \ge\ c+\frac{m(a/4)}{1+a/4}=: \alpha>0,
\]
and using \eqref{eq:B_predator_coeffs} gives \eqref{eq:B_var_bound}.
\end{proof}

We compare to a one-dimensional square-root diffusion. Consider the process
\begin{equation}\label{eq:B_CIR_compare}
dY(t)=\kappa Y(t)\,dt+\rho\sqrt{\alpha Y(t)}\,dB(t),\qquad Y(0)=p\in(0,\varepsilon_*),
\end{equation}
where $B$ is a standard one-dimensional Brownian motion.

\begin{lemma}[CIR hits $0$ before leaving $(0,2\varepsilon_*)$ with positive probability]\label{lem:B_CIR_hit}
Let $Y$ solve
\[
dY(t)=\kappa Y(t)\,dt+\rho\sqrt{\alpha Y(t)}\,dB(t),\qquad Y(0)=p\in(0,\varepsilon_*),
\]
with constants $\rho>0$, $\alpha>0$, and $\kappa\in\mathbb R$. Set
\[
\tau_0:=\inf\{t\ge0:\ Y(t)=0\},\qquad
\tau_{2\varepsilon_*}:=\inf\{t\ge0:\ Y(t)\ge 2\varepsilon_*\}.
\]
Then
\[
\mathbb P_p\bigl(\tau_0<\tau_{2\varepsilon_*}\bigr)\ >\ 0.
\]
\end{lemma}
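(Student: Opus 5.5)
This is a one-dimensional diffusion on the interval $(0,2\varepsilon_*)$ started at $p$, so the plan is to compute the exit probabilities through the scale function. The generator of $Y$ is
\[
\mathcal G f(y)=\kappa y f'(y)+\tfrac12\rho^2\alpha y f''(y).
\]
The drift-to-half-variance ratio is $2\kappa y/(\rho^2\alpha y)=2\kappa/(\rho^2\alpha)=:\gamma$, which is constant. Hence a scale function is
\[
s(y)=\int_0^y e^{-\gamma u}\,du=\frac{1-e^{-\gamma y}}{\gamma}\quad(\text{or } s(y)=y \text{ if }\kappa=0).
\]
This $s$ is finite, continuous and strictly increasing on $[0,2\varepsilon_*]$, with $s(0)=0$. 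The key point is that $s$ stays bounded near $0$, because the ratio drift$/$variance does not blow up there. So $0$ is an accessible endpoint in Feller's sense, at least as far as the scale is concerned.

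First I need existence and uniqueness of $Y$ up to hitting $0$. This follows as in Lemma~\ref{lem:axis_wellposedness_A}: the diffusion coefficient is H\"older-$1/2$ and the drift is Lipschitz, so Yamada--Watanabe applies, and $0$ is absorbing. Next I set $\theta:=\tau_0\wedge\tau_{2\varepsilon_*}$ and apply It\^o's formula to $s(Y(t\wedge\theta))$. Since $\mathcal G s=0$ on $(0,2\varepsilon_*)$ and $s'$ is bounded there, $s(Y(t\wedge\theta))$ is a bounded martingale.

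The main step is to show $\theta<\infty$ a.s. I will solve $\mathcal G h=-1$ on $(0,2\varepsilon_*)$ with $h(0)=h(2\varepsilon_*)=0$, using the Green function formula
\[
h(y)=\int_0^{2\varepsilon_*} G(y,u)\,\frac{2}{\rho^2\alpha u\,s'(u)}\,du .
\]
The speed density $2/(\rho^2\alpha u\,s'(u))$ is $O(1/u)$ near $0$, but the Green kernel satisfies $G(y,u)\lesssim s(u)\sim u$, so the integral converges. Thus $h$ is bounded and $\mathbb E_p\theta\le h(p)<\infty$. This integrability at the endpoint $0$ is exactly where I expect the main obstacle: one must check that the Feller test integral $\int_0 s(u)\,m(du)$ is finite, which makes $0$ regular or exit rather than natural. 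If the Green-function route proves awkward, a fallback is to observe that, up to the time change and constant drift, $Y$ is a squared Bessel-type process of dimension $0$. Such a process hits $0$, so $\theta<\infty$ a.s.

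Finally, optional stopping for the bounded martingale $s(Y(t\wedge\theta))$, letting $t\to\infty$ and using $\theta<\infty$, gives
\[
\mathbb P_p(\tau_0<\tau_{2\varepsilon_*})=\frac{s(2\varepsilon_*)-s(p)}{s(2\varepsilon_*)-s(0)}.
\]
This is strictly positive because $p<2\varepsilon_*$ and $s$ is strictly increasing, which proves the lemma.
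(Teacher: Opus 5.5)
Your proof is correct, and it is more explicit than the paper's.

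\textbf{The paper's route.} It rescales to $X=\frac{4}{\rho^2\alpha}Y$ and asserts that this BESQ$(0)$-type process hits $0$ almost surely for every $\kappa\in\mathbb R$. It then identifies $u(p)=\mathbb P_p(\tau_0<\tau_{2\varepsilon_*})$ as the unique bounded solution of $\mathcal L u=0$ on $(0,2\varepsilon_*)$ with $u(0)=1$, $u(2\varepsilon_*)=0$, and gets $0<u<1$ from a maximum principle.

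\textbf{Your route.} You write down the scale function $s(y)=(1-e^{-\gamma y})/\gamma$. You prove $\theta<\infty$ a.s.\ through the Green function, which amounts to finiteness of the Feller integral $\int_0 s\,dm$. Optional stopping then gives the closed form $u(p)=\bigl(s(2\varepsilon_*)-s(p)\bigr)/s(2\varepsilon_*)$.

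This buys three things.
\begin{enumerate}
\item[(i)] It proves the probabilistic representation of $u$ that the paper only asserts.
\item[(ii)] It never uses almost-sure hitting of $0$, which is false for $\kappa>0$. Letting the upper level tend to infinity in your formula gives $\mathbb P_p(\tau_0<\infty)=e^{-\gamma p}<1$, the supercritical Feller diffusion. So the paper's intermediate claim is wrong, though harmless for this lemma, since only finiteness of the exit time from the bounded interval is needed.
\item[(iii)] Your formula is visibly decreasing in $p$. This rigorously supplies the monotonicity $u(p)\ge u(\varepsilon_*)$ that the paper invokes heuristically in Lemma~\ref{lem:uniform_strip_hit_on_G}. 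That lemma uses the drift $\kappa_+\ge 0$, so the case $\kappa>0$, where the paper's hitting claim fails, is exactly the one that matters downstream.
\end{enumerate}
The paper's version is shorter but rests on cited facts that do not quite apply as stated.

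One caveat: your fallback (a squared Bessel-type process of dimension $0$ hits $0$, hence $\theta<\infty$) reproduces the paper's error when $\kappa>0$. The conclusion $\theta<\infty$ a.s.\ still holds in that case, but your Green-function bound, valid for every sign of $\kappa$, is the correct justification. Keep it as the primary argument.
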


\begin{proof}
By the scaling $X(t):=\frac{4}{\rho^2\alpha}\,Y(t)$, the process $X$ satisfies
\[
dX(t)=\kappa X(t)\,dt+2\sqrt{X(t)}\,dB(t),
\]
i.e.\ $X$ is a square-root diffusion of CIR/BESQ$(0)$ type (zero mean-reversion level). In particular, the boundary
$0$ is accessible and is hit in finite time almost surely: for every $x>0$,
\[
\mathbb P_x\bigl(\inf\{t\ge0:\ X(t)=0\}<\infty\bigr)=1,
\]
see \citep{revuz_continuous_1999}. Returning to $Y$ yields $\mathbb P_p(\tau_0<\infty)=1$ for all $p>0$.

Now fix $\tilde b:=2\varepsilon_*$ and define
\[
u(p):=\mathbb P_p(\tau_0<\tau_{\tilde b}),\qquad p\in(0,\tilde b).
\]
By continuity of paths, $\tau_0\wedge\tau_{\tilde b}>0$ a.s. and $Y(\tau_0\wedge\tau_{\tilde b})\in\{0,\tilde b\}$ a.s. Moreover, by the strong
Markov property, $u$ is the unique bounded solution of the Dirichlet boundary value problem
\[
\mathcal L u=0\ \text{on }(0,\tilde b),\qquad u(0)=1,\ \ u(\tilde b)=0,
\]
where $\mathcal L f(y)=\kappa y f'(y)+\frac{\rho^2\alpha}{2}\,y f''(y)$ is the generator of $Y$ on $(0,\infty)$.
Since $u$ is continuous on $[0,\tilde b]$ with distinct boundary values and satisfies the maximum principle for
one-dimensional diffusions, it follows that $0<u(p)<1$ for all $p\in(0,\tilde b)$.
In particular, for $p\in(0,\varepsilon_*)\subset(0,\tilde b)$ we have $u(p)>0$, i.e.\ $\mathbb P_p(\tau_0<\tau_{2\varepsilon_*})>0$.
\end{proof}

\begin{lemma}[Strip-hitting bound]\label{lem:strip_hit_with_sigma}
Fix $b>0$ and $x=(n,p)\in [a/2,2a]\times(0,b/2)$. Let
\[
\tau_0^P:=\inf\{t\ge0:\ P(t)=0\},\qquad
\tau_b^P:=\inf\{t\ge0:\ P(t)\ge b\},\qquad
\sigma_N:=\inf\{t\ge0:\ N(t)\notin[a/4,4a]\},
\]
and $\theta:=\tau_0^P\wedge\tau_b^P\wedge\sigma_N$.
Assume that on $\{t<\sigma_N\}$,
\[
\mu_2(N(t),P(t))\le \kappa P(t),\qquad \Sigma_{22}(N(t),P(t))\ge \alpha P(t),
\]
for constants $\kappa\in\mathbb R$ and $\alpha>0$. Let $Y$ be the CIR diffusion
\[
dY(t)=\kappa_+Y(t)\,dt+\rho\sqrt{\alpha Y(t)}\,dB(t),\qquad \kappa_+:=\max\{\kappa,0\},
\]
and define $u(r):=\mathbb P_r(\tau_0^Y<\tau_b^Y)$.
Then for every such $x=(n,p)$,
\[
\mathbb P_x\bigl(\tau_0^P<\tau_b^P\wedge\sigma_N\bigr)
\ \ge\
u(p)\;-\;\mathbb P_x\bigl(\sigma_N<\tau_0^P\wedge\tau_b^P\bigr).
\]
\end{lemma}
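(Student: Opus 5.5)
The plan is to prove the inequality with a scale-function (harmonic-function) comparison rather than a pathwise coupling. A pathwise comparison theorem would need the predator component and $Y$ to share the same diffusion coefficient, which they do not: we only know $\Sigma_{22}\ge\alpha P$.

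\textbf{Step 1: the function $u$.} Let $S$ be the scale function of $Y$, that is, $S'(y)=\exp\bigl(-2\kappa_+y/(\rho^2\alpha)\bigr)$. By the Dirichlet-problem characterization used in Lemma~\ref{lem:B_CIR_hit},
\[
u(r)=\frac{S(b)-S(r)}{S(b)-S(0)},\qquad r\in[0,b].
\]
This $u$ is smooth on $[0,b]$, with $u(0)=1$, $u(b)=0$ and $0\le u\le1$. It satisfies $\kappa_+ r\,u'(r)+\tfrac{\rho^2\alpha}{2}\,r\,u''(r)=0$. Moreover $u'=-S'/(S(b)-S(0))<0$, and since $S''=-\tfrac{2\kappa_+}{\rho^2\alpha}S'\le0$, we get $u''\ge0$.

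\textbf{Step 2: Itô's formula for $u(P)$.} From \eqref{eq:B_SDE} and \eqref{eq:Levent_explicit_A}, the predator component satisfies
\[
dP=\mu_2\,dt+\rho\bigl(-\sqrt{cP}\,dW_D+\sqrt{\lambda_E}\,dW_E\bigr),
\]
so its quadratic variation is $\rho^2\Sigma_{22}(N,P)\,dt$. Apply Itô's formula to $u(P(t\wedge\theta))$. On $\{t<\theta\}\subset\{t<\sigma_N\}$ the drift term is
\[
u'(P)\mu_2+\tfrac{\rho^2}{2}u''(P)\Sigma_{22}\ \ge\ u'(P)\kappa_+P+\tfrac{\rho^2\alpha}{2}P\,u''(P)=0.
\]
The inequality uses two facts:
\begin{itemize}
\item $u'<0$ together with $\mu_2\le\kappa P\le\kappa_+P$;
\item $u''\ge0$ together with $\Sigma_{22}\ge\alpha P$.
\end{itemize}
The stochastic integral has a bounded integrand, because $P\in(0,b)$ and $N\in[a/4,4a]$ before $\theta$. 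Hence $u(P(t\wedge\theta))$ is a bounded submartingale, and
\[
u(p)\le\mathbb E_x\bigl[u(P(t\wedge\theta))\bigr]\qquad\text{for all } t\ge0.
\]

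\textbf{Step 3: letting $t\to\infty$.} Split the expectation according to which stopping time realizes $\theta$:
\begin{itemize}
\item on $\{\tau_0^P<\tau_b^P\wedge\sigma_N\}$ the terminal value is $u(0)=1$;
\item on $\{\tau_b^P\le\tau_0^P\wedge\sigma_N\}$ it is $u(b)=0$;
\item on $\{\sigma_N<\tau_0^P\wedge\tau_b^P\}$ it is at most $1$.
\end{itemize}
Bounded convergence then gives
\[
u(p)\le\mathbb P_x(\tau_0^P<\tau_b^P\wedge\sigma_N)+\mathbb P_x(\sigma_N<\tau_0^P\wedge\tau_b^P)+\mathbb P_x(\theta=\infty),
\]
so the claimed inequality follows once we know $\mathbb P_x(\theta=\infty)=0$.

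\textbf{Main obstacle: $\theta<\infty$ a.s.} I expect this to be the hardest step, because the noise degenerates as $P\downarrow0$, so the usual uniform-ellipticity exit-time bound does not apply directly. My first attempt is a Lyapunov function $g(y)=y^\gamma$ with $\gamma\in(0,1)$. On $\{t<\theta\}$,
\[
\mathcal L g\le\gamma P^{\gamma-1}\bigl(\kappa_+b-\tfrac{\rho^2\alpha}{2}(1-\gamma)\bigr).
\]
This is $\le-\delta<0$ on $(0,b)$ provided $b<\rho^2\alpha(1-\gamma)/(2\kappa_+)$. When that holds, Dynkin's formula gives $\mathbb E_x[\theta]\le g(b)/\delta<\infty$. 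This is enough for the application, where $b=2\varepsilon_*$ can be chosen small.

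For general $b$ I would instead use the submartingale structure of the bounded process $u(P(t\wedge\theta))$, which converges a.s. On $\{\theta=\infty\}$ the quadratic variation $\int_0^\infty\rho^2\Sigma_{22}\,ds\ge\rho^2\alpha\int_0^\infty P\,ds$ can stay finite only if $P$ becomes small. I would combine this with the first-step estimate near $0$ to rule out $\{\theta=\infty\}$.
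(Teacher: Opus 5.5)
Your proposal is correct and is essentially the paper's argument: the paper works with $v=1-u$, shows that $v(P(t\wedge\theta))$ is a bounded supermartingale (equivalent to your submartingale statement for $u(P(t\wedge\theta))$), and then splits according to which of $\tau_0^P,\tau_b^P,\sigma_N$ comes first. Your version is more careful than the paper's, which uses the convexity $u''\ge0$ (i.e.\ $v''\le 0$) without stating it and tacitly assumes $\theta<\infty$ a.s.\ when it writes $\mathbb E_x[v(P(\theta))]$ and $\mathbb P_x(\mathrm{I})=1-\mathbb P_x(\mathrm{II})-\mathbb P_x(\mathrm{III})$; your $y^\gamma$ bound settles this for the small $b=2\varepsilon_*$ used later, and for arbitrary $b$ you can replace your sketched convergence argument by a bounded, increasing, concave $g$ on $[0,b]$ with $g'(y)=\tfrac{2}{\rho^2\alpha}e^{-\beta y}\int_y^{2b}z^{-1}e^{\beta z}\,dz$ and $\beta=2\kappa_+/(\rho^2\alpha)$, which satisfies $\kappa_+yg'+\tfrac{\rho^2\alpha}{2}yg''=-1$ and so gives $\mathbb E_x[\theta]\le g(b)-g(0)$ by Dynkin's formula.
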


\begin{proof}
Let $v(r):=\mathbb P_r(\tau_b^Y<\tau_0^Y)=1-u(r)$. Then $v\in C^2((0,b))\cap C([0,b])$ solves
\[
\kappa_+ r\,v'(r)+\frac{\rho^2\alpha}{2}\,r\,v''(r)=0,\qquad v(0)=0,\ v(b)=1,
\]
and satisfies $v'\ge0$.

Set $\theta=\tau_0^P\wedge\tau_b^P\wedge\sigma_N$. Applying It\^o to $v(P(t\wedge\theta))$ and using
$\mu_2\le \kappa P\le \kappa_+P$ together with $\Sigma_{22}\ge \alpha P$ and $v'\ge0$ yields that
$v(P(t\wedge\theta))$ is a bounded supermartingale. Hence
\[
v(p)\ \ge\ \mathbb E_x[v(P(\theta))].
\]
Now decompose into the disjoint events
\[
\text{I}=\{\tau_0^P<\tau_b^P\wedge\sigma_N\},\quad
\text{II}=\{\tau_b^P<\tau_0^P\wedge\sigma_N\},\quad
\text{III}=\{\sigma_N<\tau_0^P\wedge\tau_b^P\}.
\]
On II we have $P(\theta)=b$ hence $v(P(\theta))=1$. On I we have $P(\theta)=0$ hence $v(P(\theta))=0$.
On III we only know $0\le v(P(\theta))\le 1$. Therefore
\[
\mathbb E_x[v(P(\theta))]
=
\mathbb P_x(\text{II})+\mathbb E_x\!\left[v(P(\theta))\,;\text{III}\right]
\ \ge\ \mathbb P_x(\text{II}).
\]
Thus $\mathbb P_x(\text{II})\le v(p)$ and consequently
\[
\mathbb P_x(\text{I})
=
1-\mathbb P_x(\text{II})-\mathbb P_x(\text{III})
\ \ge\
1-v(p)-\mathbb P_x(\text{III})
=
u(p)-\mathbb P_x(\sigma_N<\tau_0^P\wedge\tau_b^P),
\]
which is the claim.
\end{proof}

\begin{lemma}[Uniform strip-hitting probability on $G$]\label{lem:uniform_strip_hit_on_G}
Fix $a>0$ and $\varepsilon_*>0$, and set
\[
G:=[a/2,2a]\times(0,\varepsilon_*),\qquad b:=2\varepsilon_*.
\]
Let $\tau_0^P,\tau_b^P,\sigma_N$ be as in Lemma~\ref{lem:strip_hit_with_sigma}. Assume that there exist constants
$\kappa\in\mathbb R$ and $\alpha>0$ such that on $\{t<\sigma_N\}$,
\[
\mu_2(N(t),P(t))\le \kappa P(t),\qquad \Sigma_{22}(N(t),P(t))\ge \alpha P(t).
\]
Then there exists $q_*>0$ such that
\begin{equation}\label{eq:uniform_strip_hit_qstar}
\inf_{x\in G}\ \mathbb P_x\bigl(\tau_0^P<\tau_b^P\wedge\sigma_N\bigr)\ \ge\ q_*.
\end{equation}
One convenient choice is
\[
q_*:=\frac12\,u(\varepsilon_*),
\qquad
u(r):=\mathbb P_r(\tau_0^Y<\tau_b^Y),
\]
provided $\varepsilon_*>0$ is chosen so that
\begin{equation}\label{eq:sigmaN_small_enough}
\sup_{x\in G}\ \mathbb P_x\bigl(\sigma_N<\tau_0^P\wedge\tau_b^P\bigr)
\ \le\ \frac12\,u(\varepsilon_*).
\end{equation}
\end{lemma}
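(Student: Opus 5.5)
The plan is to derive \eqref{eq:uniform_strip_hit_qstar} from the pointwise estimate of Lemma~\ref{lem:strip_hit_with_sigma}, applied with $b:=2\varepsilon_*$, combined with the smallness assumption \eqref{eq:sigmaN_small_enough}. Every $x=(n,p)\in G$ has $n\in[a/2,2a]$ and $p\in(0,\varepsilon_*)=(0,b/2)$, so Lemma~\ref{lem:strip_hit_with_sigma} applies at each such $x$. The hypotheses on $\mu_2$ and $\Sigma_{22}$ on $\{t<\sigma_N\}$ are exactly the ones supplied by Lemma~\ref{lem:B_pred_bounds}. That lemma gives
\[
\mathbb P_x\bigl(\tau_0^P<\tau_b^P\wedge\sigma_N\bigr)\ \ge\ u(p)-\mathbb P_x\bigl(\sigma_N<\tau_0^P\wedge\tau_b^P\bigr),
\]
with $u(r)=\mathbb P_r(\tau_0^Y<\tau_b^Y)$ for the comparison diffusion with drift coefficient $\kappa_+\ge0$.

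The key step is a lower bound on $u(p)$ that is uniform over $p\in(0,\varepsilon_*)$. The function $u$ solves $\kappa_+ r u'+\tfrac{\rho^2\alpha}{2} r u''=0$ on $(0,b)$ with $u(0)=1$, $u(b)=0$. Explicitly, $u(r)=\int_r^b s(y)\,dy\big/\int_0^b s(y)\,dy$ with scale density $s(y)=\exp(-2\kappa_+ y/(\rho^2\alpha))$, or $u(r)=1-r/b$ when $\kappa_+=0$. In either case $u$ is continuous and nonincreasing on $[0,b]$, with $u(\varepsilon_*)>0$ because $\varepsilon_*<b$; Lemma~\ref{lem:B_CIR_hit} also gives $u>0$ on $(0,b)$. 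Monotonicity then yields $u(p)\ge u(\varepsilon_*)$ for all $p\in(0,\varepsilon_*)$.

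Combining the two bounds with \eqref{eq:sigmaN_small_enough}, for every $x\in G$,
\[
\mathbb P_x\bigl(\tau_0^P<\tau_b^P\wedge\sigma_N\bigr)\ \ge\ u(\varepsilon_*)-\tfrac12 u(\varepsilon_*)=q_*>0.
\]
Taking the infimum over $x\in G$ gives the claim.

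The main obstacle is the legitimacy of assumption \eqref{eq:sigmaN_small_enough} itself. The constants $\kappa,\alpha$ depend only on $(a,m,c)$, but $u(\varepsilon_*)$ depends on $\varepsilon_*$ through $b=2\varepsilon_*$, and rescaling shows $u(\varepsilon_*)$ stays bounded below as $\varepsilon_*\to0$ (it tends to $1/2$). It therefore suffices to show that $\sup_{x\in G}\mathbb P_x(\sigma_N<\tau_0^P\wedge\tau_b^P)\to0$ as $\varepsilon_*\to0$.

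For that I would use the prey equation. While $P<2\varepsilon_*$, the drift of $N$ is bounded on $[a/4,4a]$, and its noise is nondegenerate but of size $\rho$. Since $\tau_0^P\wedge\tau_b^P$ is typically of order $\varepsilon_*$, I would bound $\mathbb E[\tau_0^P\wedge\tau_b^P]\lesssim\varepsilon_*$ by applying Itô's formula to a function such as $P\mapsto P(b-P)$ whose generator is $\le-\alpha'<0$. The probability that $N$ moves by $a/4$ within that time can then be controlled by Doob/Chebyshev applied to the martingale part together with the drift bound. This is the only genuinely nontrivial estimate; the rest is bookkeeping.
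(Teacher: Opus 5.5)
Your argument is the paper's proof: apply Lemma~\ref{lem:strip_hit_with_sigma} with $b=2\varepsilon_*$, use that $u$ is nonincreasing to get $u(p)\ge u(\varepsilon_*)$ for $p\in(0,\varepsilon_*)$, and absorb the $\sigma_N$-term with \eqref{eq:sigmaN_small_enough}. Your scale-function formula, which gives $u(\varepsilon_*)=1/(1+e^{2\kappa_+\varepsilon_*/(\rho^2\alpha)})$, simply makes precise the monotonicity the paper asserts informally.

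Your further point, that the proviso itself has to be verified, goes beyond the paper and is right in outline. One correction in that sketch: $P(b-P)$ does not have generator $\le-\alpha'<0$. Its generator is at best $\le-\alpha' P$, which degenerates at $P=0$. To get $\mathbb E[\tau_0^P\wedge\tau_b^P\wedge\sigma_N]=O(\varepsilon_*)$, use instead $g(P)=P\log(b/P)$, for which $\mathcal L g\le-\rho^2\alpha/2+O(b)$ and $0\le g\le b/e$.
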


\begin{proof}
Fix $x=(n,p)\in G$. Apply Lemma~\ref{lem:strip_hit_with_sigma} with $b=2\varepsilon_*$ to obtain
\[
\mathbb P_x\bigl(\tau_0^P<\tau_b^P\wedge\sigma_N\bigr)
\ \ge\
u(p)\;-\;\mathbb P_x\bigl(\sigma_N<\tau_0^P\wedge\tau_b^P\bigr).
\]
Since $p\in(0,\varepsilon_*)$ and $u(\cdot)$ is decreasing in the initial condition (starting closer to $0$ can only
increase the chance of hitting $0$ before $b$), we have $u(p)\ge u(\varepsilon_*)$. Hence
\[
\mathbb P_x\bigl(\tau_0^P<\tau_b^P\wedge\sigma_N\bigr)
\ \ge\
u(\varepsilon_*)\;-\;\mathbb P_x\bigl(\sigma_N<\tau_0^P\wedge\tau_b^P\bigr).
\]
If \eqref{eq:sigmaN_small_enough} holds, then the right-hand side is bounded below by $\frac12u(\varepsilon_*)$,
uniformly in $x\in G$. Taking the infimum over $x\in G$ yields \eqref{eq:uniform_strip_hit_qstar}.
\end{proof}

\begin{proof}[Proof of Theorem~\ref{thm:positive_extinction_probability}]
Let $z\in U$ be arbitrary. By \eqref{eq:B_enter_G}, there exists $T>0$ such that
\[
\mathbb P_z\bigl(Z(T)\in G,\ T<\tau\bigr)\ >\ 0.
\]
Let $b=2\varepsilon_*$ and $\sigma_N$ be the prey-exit time from $[a/4,4a]$ as above, and define the stopping time
\[
\sigma:=\tau_0^P\wedge\tau_b^P\wedge\sigma_N.
\]
On the event $\{\tau_0^P=\sigma\}$ the process hits the boundary $\{P=0\}\subset\partial U$ and hence $\tau<\infty$
under the absorbed convention. Therefore,
\[
\mathbb P_x(\tau<\infty)\ \ge\ \mathbb P_x\bigl(\tau_0^P<\tau_b^P\wedge\sigma_N\bigr),
\qquad x\in G.
\]
By Lemma~\ref{lem:uniform_strip_hit_on_G}, there exists $q_*>0$ such that
\[
\inf_{x\in G}\ \mathbb P_x(\tau<\infty)\ \ge\ q_*.
\]
Apply the strong Markov property at time $T$:
\begin{align*}
\mathbb P_z(\tau<\infty)
&\ge
\mathbb E_z\Bigl[\mathbf 1_{\{Z(T)\in G,\ T<\tau\}}\ \mathbb P_{Z(T)}(\tau<\infty)\Bigr]\\
&\ge
q_*\ \mathbb P_z\bigl(Z(T)\in G,\ T<\tau\bigr)
\ >\ 0.
\end{align*}
This proves $\mathbb P_z(\tau<\infty)>0$ for all $z\in U$.
\end{proof}

\section{A hitting lemma for subcritical extinction}
\label{app:cir_hitting}

This appendix supplies the one-dimensional boundary-hitting input used in
Theorem~\ref{thm:as_predator_extinction_subcritical}. 
We record a domination statement matching the usage in Theorem~\ref{thm:as_predator_extinction_subcritical}.

\begin{lemma}[A convenient domination form]\label{lem:A6_domination_form_A}
Let $P$ be a nonnegative continuous semimartingale on $[0,\tau)$ of the form
\[
dP_t = a_t\,P_t\,dt + dM_t,\qquad P_0>0,
\]
where $a_t\le 0$ for all $t<\tau$ and $M$ is a continuous local martingale satisfying
\[
d\langle M\rangle_t \ \ge\ \rho^2 c\,P_t\,dt
\qquad\text{for all }t<\tau,
\]
for some constants $\rho>0$ and $c>0$.
Then
\[
\tau_0:=\inf\{t\ge 0:\,P_t=0\}<\infty\quad\text{almost surely}.
\]
\end{lemma}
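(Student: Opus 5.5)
The plan has two stages. First we show that $P_t\to0$ almost surely on $\{\tau_0=\infty\}$, using martingale convergence for processes bounded below. Then a concave transformation $g(x)=\sqrt{x}$ turns "converges to $0$" into "hits $0$ in finite time". Throughout, we read the conclusion on the lifetime of the semimartingale. In the application of Theorem~\ref{thm:as_predator_extinction_subcritical}, the predator coordinate is considered up to $\tau$; if $\tau<\infty$ happens first, there is nothing to prove for $\tau<\infty$. We therefore argue on $\{\tau=\infty\}$ and prove $\mathbb P(\tau_0=\infty,\ \tau=\infty)=0$. All integrals below are taken up to $\tau_0$.

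\textbf{Step 1 (convergence to zero).} Write
\[
M_t=P_t-P_0+\int_0^t |a_s|P_s\,ds .
\]
Since $P\ge0$ and $a\le0$, we have $M_t\ge -P_0$. A continuous local martingale bounded below is a supermartingale (Fatou), so it converges a.s. to a finite limit. For continuous local martingales this forces $\langle M\rangle_\infty<\infty$ a.s. The hypothesis $d\langle M\rangle_t\ge\rho^2cP_t\,dt$ then gives $\int_0^\infty P_t\,dt<\infty$. Moreover $P=P_0+M-\int|a|P\,ds$ is a nonnegative supermartingale, hence $P_t\to P_\infty$ a.s. Integrability of $P$ then forces $P_\infty=0$. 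So on $\{\tau_0=\infty\}$ we have $P_t\to0$.

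\textbf{Step 2 (from convergence to hitting).} Apply Itô's formula to $\sqrt{P}$ on $\{P>0\}$:
\[
d\sqrt{P_t}=\frac{a_tP_t}{2\sqrt{P_t}}\,dt-\frac{1}{8}P_t^{-3/2}\,d\langle M\rangle_t+\frac{dM_t}{2\sqrt{P_t}} .
\]
The first term is $\le0$. The second is $\le -\frac{\rho^2c}{8}P_t^{-1/2}\,dt$, which is $\le -\frac{\rho^2c}{8}\,dt$ whenever $P_t\le1$.

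To localize, define stopping times $S_n:=\inf\{t\ge n: P_t\le 1/2\}$ and $R_n:=\inf\{t>S_n: P_t\ge1\}$. By Step 1, on $\{\tau_0=\infty\}$ there is some $n$ with $S_n<\infty$ and $R_n=\infty$. On $[S_n,R_n\wedge\tau_0)$ set
\[
N_t:=\int_{S_n}^{t}\frac{dM_s}{2\sqrt{P_s}} .
\]
The Itô expansion gives
\[
0\le\sqrt{P_t}\le \sqrt{1/2}-\frac{\rho^2c}{8}(t-S_n)+N_t .
\]
Hence $N_t\ge \frac{\rho^2c}{8}(t-S_n)-1$, which is bounded below. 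Stopped at $R_n\wedge\tau_0$, $N$ is a local martingale bounded below, so it converges a.s. to a finite limit. On $\{R_n=\tau_0=\infty\}$, however, $N_t\to\infty$ linearly, a contradiction. Therefore $\mathbb P(S_n<\infty,\ R_n=\tau_0=\infty)=0$ for every $n$. Taking the countable union over $n$ yields $\mathbb P(\tau_0=\infty)=0$.

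\textbf{Main obstacle.} The delicate point is the localization in Step 2. The integrand $P^{-1/2}$ is singular near $0$, so $N$ must be defined only up to $\tau_0$; Itô's formula is applied with stopping times $\inf\{t:P_t\le 1/j\}$, and we let $j\to\infty$. We also need that the bounded-below local martingale argument applies to the stopped process on the random interval $[S_n,R_n\wedge\tau_0)$. We would handle this by shifting time via the strong Markov-free observation that $N_{(S_n+\cdot)\wedge R_n\wedge\tau_0}$ is a local martingale in the shifted filtration. An alternative for Step 2 is a comparison with the CIR hitting result already used in Lemma~\ref{lem:B_CIR_hit}, but the Lyapunov argument avoids needing a comparison theorem with a non-Lipschitz diffusion coefficient.
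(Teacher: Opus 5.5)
Your proof is correct, and it takes a genuinely different route from the paper's. It is also sound at the point where the paper's argument breaks down.

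\textbf{The paper's route.} The paper removes the drift via $X_t=e^{-A_t}P_t$ with $A_t=\int_0^t a_s\,ds\le 0$. This makes $X$ a nonnegative continuous local martingale with $d\langle X\rangle_t\ge\rho^2c\,X_t\,dt$. It then writes $X_{t\wedge\tau_0}=X_0+B_{Q_t}$ by Dambis--Dubins--Schwarz and splits on $Q_\infty=\langle X\rangle_\infty$. If $Q_\infty=\infty$, $B$ reaches $-X_0$ and $X$ hits $0$. If $Q_\infty<\infty$, it asserts $X_\infty>0$ on $\{\tau_0=\infty\}$, which would force $\int_0^\infty X_s\,ds=\infty$.

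\textbf{The gap it leaves.} The second case is justified by saying $B_{Q_\infty}$ has a continuous law given $Q_\infty$. But $Q_\infty$ is a functional of the same Brownian path, so this does not follow. Moreover, since $\{\tau_0<\infty\}\subset\{X_\infty=0,\ Q_\infty<\infty\}$, the asserted identity $\mathbb P(X_\infty=0,\ Q_\infty<\infty)=0$ would even give $\mathbb P(\tau_0<\infty)=0$. Your Step~1, applied to $X$, shows that on $\{\tau_0=\infty\}$ one always has $\langle X\rangle_\infty<\infty$ and $X_t\to0$. That is exactly the scenario the paper dismisses.

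\textbf{What your Step 2 supplies.} What is needed is a quantitative reason why the process cannot creep to $0$ only asymptotically. Your Step~2 gives it: the square-root floor on $d\langle M\rangle$ gives $\sqrt{P}$ a drift at most $-\rho^2c/8$ while $P\le 1$.

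The DDS route can also be repaired directly. Let $T_{-X_0}:=\inf\{u\ge0:\ B_u=-X_0\}$. On $\{\tau_0=\infty\}$, the bound $dt\le dQ_t/(\rho^2c\,X_t)$ yields $t\le(\rho^2c)^{-1}\int_0^{T_{-X_0}}(X_0+B_u)^{-1}\,du$ for all $t$. This integral is a.s.\ finite, since it is the time for the BESQ$(0)$-type process $dY=\rho\sqrt{cY}\,dW$, $Y_0=X_0$, to hit $0$.

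\textbf{Comparison.} The paper's route buys a clean driftless reduction and a transparent time-change picture. Yours is more elementary: it needs only convergence of bounded-below local martingales and It\^o's formula for $\sqrt{\cdot}$. Its one cost is the localization on $[S_n,R_n\wedge\tau_0)$ with the singular integrand $P^{-1/2}$. You correctly flag this, and it is handled by the stochastic-interval form of the convergence theorem: a continuous local martingale on $[0,\zeta)$ that is bounded below has a finite limit at $\zeta$.

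Your explicit restriction to $\{\tau=\infty\}$ is also the right reading of the statement. The paper makes the same assumption tacitly when it runs the DDS representation for all $t\ge0$ on $\{\tau_0=\infty\}$.
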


\begin{proof}
We remove the drift by an exponential transform.
Define the adapted finite-variation process
\[
A_t:=\int_0^t a_s\,ds \le 0,
\qquad t<\tau,
\]
and set
\[
X_t:=e^{-A_t}P_t,\qquad t<\tau.
\]
Since $A$ has finite variation, It\^{o}'s formula for products yields, on $[0,\tau)$,
\[
dX_t = e^{-A_t}\,dP_t + P_t\,d(e^{-A_t})
= e^{-A_t}(a_tP_t\,dt+dM_t) - e^{-A_t}a_tP_t\,dt
= e^{-A_t}\,dM_t.
\]
Hence $X$ is a continuous local martingale on $[0,\tau)$.
Moreover, $X_t\ge 0$ for all $t<\tau$ because $P_t\ge 0$ and $e^{-A_t}>0$.
Finally, $X_t=0$ iff $P_t=0$ (since $e^{-A_t}>0$), so the hitting times of $0$ coincide:
\[
\tau_0=\inf\{t\ge0:\,P_t=0\}=\inf\{t\ge0:\,X_t=0\}.
\]
Thus it suffices to show that $X$ hits $0$ in finite time a.s.

We establish a square-root-type lower bound for the quadratic variation of $X$.
Since $X_t=\int_0^t e^{-A_s}\,dM_s + X_0$, we have
\[
d\langle X\rangle_t = e^{-2A_t}\,d\langle M\rangle_t.
\]
Using the assumed bound $d\langle M\rangle_t\ge \rho^2 c\,P_t\,dt$ and $P_t=e^{A_t}X_t$, we obtain
\begin{equation}\label{eq:qv_lower_bound_X}
d\langle X\rangle_t
\ge e^{-2A_t}\,\rho^2 c\,P_t\,dt
= \rho^2 c\,e^{-A_t}X_t\,dt
\ge \rho^2 c\,X_t\,dt,
\qquad t<\tau,
\end{equation}
where we used $e^{-A_t}\ge 1$ because $A_t\le 0$.

Let
\[
Q_t:=\langle X\rangle_{t\wedge\tau_0},\qquad t\ge0.
\]
Then $Q_t$ is continuous and nondecreasing.
By the Dambis--Dubins--Schwarz theorem applied to the continuous local martingale $X_{t\wedge\tau_0}$,
there exists a standard one-dimensional Brownian motion $B$ such that
\begin{equation}\label{eq:DDS_representation}
X_{t\wedge\tau_0} = X_0 + B_{Q_t},\qquad t\ge0.
\end{equation}

Assume for contradiction that $\mathbb P(\tau_0=\infty)>0$ and work on the event $\{\tau_0=\infty\}$.
Then $X_t>0$ for all $t\ge0$ and \eqref{eq:DDS_representation} holds for all $t\ge0$ with $Q_t=\langle X\rangle_t$.

We claim that on $\{\tau_0=\infty\}$ one must have $\langle X\rangle_\infty=\infty$.
Indeed, if $\langle X\rangle_\infty<\infty$, then $Q_t$ converges to a finite limit $Q_\infty$ and hence
$X_t$ converges almost surely to
\[
X_\infty = X_0 + B_{Q_\infty}.
\]
Since $B_{Q_\infty}$ has a continuous distribution conditional on $Q_\infty$, we have
$\mathbb P(X_\infty=0,\ Q_\infty<\infty)=0$.
Thus on $\{\tau_0=\infty,\ Q_\infty<\infty\}$ we have $X_\infty>0$ almost surely.
But then $\int_0^\infty X_s\,ds=\infty$ on that event (because $X_s\to X_\infty>0$), and integrating
\eqref{eq:qv_lower_bound_X} yields
\[
\langle X\rangle_\infty
\ge \rho^2 c\int_0^\infty X_s\,ds
= \infty,
\]
a contradiction. Hence, on $\{\tau_0=\infty\}$ we must have $\langle X\rangle_\infty=\infty$.

Now let
\[
\sigma:=\inf\{u\ge0:\ B_u=-X_0\}.
\]
For Brownian motion, $\sigma<\infty$ almost surely.
Since $Q_t=\langle X\rangle_t$ is continuous, nondecreasing, and diverges to $\infty$ on $\{\tau_0=\infty\}$, there
exists a (random) time $t_\sigma$ such that $Q_{t_\sigma}=\sigma$.
Evaluating \eqref{eq:DDS_representation} at $t_\sigma$ gives
\[
X_{t_\sigma}=X_0+B_{\sigma}=0,
\]
contradicting $\tau_0=\infty$.
Therefore $\mathbb P(\tau_0=\infty)=0$, i.e.\ $\tau_0<\infty$ almost surely.

Since $P_t=e^{A_t}X_t$ with $e^{A_t}>0$, $P$ hits $0$ at the same time as $X$.
Hence $\inf\{t\ge0:\ P_t=0\}<\infty$ almost surely, completing the proof.
\end{proof}

\end{appendices}

\section*{Data Availability:}
Data sharing is not applicable to this article, as no new data was created or analyzed in this study. All simulations are reproducible; code and scripts are provided on demand.

\section*{Author Contribution:}

All authors have accepted responsibility for the entire content of this manuscript and approved its submission.

\section*{Declaration:}
All authors declare no competing interests.

\bibliography{refs}

\end{document}